\newcommand{\rvline}{\hspace*{-\arraycolsep}\vline\hspace*{-\arraycolsep}}
\theoremstyle{plain}
\newtheorem{thm}{Theorem}[section]
\newtheorem{cor}[thm]{Corollary}
\newtheorem{lem}[thm]{Lemma}
\newtheorem{prop}[thm]{Proposition}
\theoremstyle{definition}
\newtheorem{defn}[thm]{Definition}
\theoremstyle{remark}
\def\BI{\mathfrak{BI}}
\setlist[enumerate,1]{leftmargin=2em}
\def\H{\mathfrak H}
\def\F{\mathbb F}
\def\e{\varepsilon}
\title[Finite-dimensional modules of $\Re$ and $\H$]{Finite-dimensional modules of the universal Racah algebra and the universal additive DAHA of type $(C_1^\vee,C_1)$}
\author{Hau-Wen Huang}
\address{
Hau-Wen Huang\\
Department of Mathematics\\
National Central University\\
Chung-Li 32001 Taiwan
}
\email{hauwenh@math.ncu.edu.tw}
\begin{document}
\begin{abstract}
Assume that $\F$ is an algebraically closed field with characteristic zero. The universal Racah algebra $\Re$ is a unital associative $\F$-algebra defined by generators and relations. The generators are $A,B, C, D$ and the relations state that 
$$
[A,B]=[B,C]=[C,A]=2D
$$ 
and each of 
\begin{gather*}
[A,D]+AC-BA, \qquad [B,D]+BA-CB, \qquad [C,D]+CB-AC
\end{gather*}
is central in $\Re$. The universal additive DAHA (double affine Hecke algebra) $\mathfrak H$ of type $(C_1^\vee,C_1)$ is a unital associative $\F$-algebra generated by $t_0,t_1,t_0^\vee,t_1^\vee$ and the relations state that 
$$
t_0+t_1+t_0^\vee+t_1^\vee=-1
$$ 
and 
each of $t_0^2, t_1^2, t_0^{\vee 2}, t_1^{\vee 2}$ is central in $\mathfrak H$. Each $\H$-module is an $\Re$-module by pulling back via the algebra homomorphism $\Re\to \H$ given by 
\begin{eqnarray*}
A &\mapsto & \frac{(t_1^\vee+t_0^\vee)(t_1^\vee+t_0^\vee+2)}{4},
\\
B &\mapsto & \frac{(t_1+t_1^\vee)(t_1+t_1^\vee+2)}{4},
\\
C &\mapsto & \frac{(t_0^\vee+t_1)(t_0^\vee+t_1+2)}{4}.
\end{eqnarray*}
Let $V$ denote any finite-dimensional irreducible $\H$-module.
The set of $\Re$-submodules of $V$ forms a lattice under the inclusion partial order. 
We classify the lattices that arise by this construction. As a consequence, the $\Re$-module $V$ is completely reducible if and only if $t_0$ is diagonalizable on $V$.
\end{abstract}

\maketitle

{\footnotesize{\bf Keywords:} additive DAHA, Racah algebras, irreducible modules.}

{\footnotesize{\bf MSC2020:} 16G30, 81R10, 81R12.}

\section{Introduction}\label{s:introduction}

Throughout this paper, we adopt the following conventions. Assume that $\F$ is an algebraically closed field with characteristic zero. An algebra is meant to be an associative algebra with unit $1$. Recall that the commutator $[\,,\,]$ and the anticommutator $\{\,,\,\}$ of two elements $X,Y$ in an algebra defined as follows:
\begin{align*}
[X,Y]&=XY-YX,
\\
\{X,Y\}&=XY+YX.
\end{align*}

Motivated by the coupling problem for three angular momenta,  L\'{e}vy-Leblond and L\'{e}vy-Nahas gave the first presentation for the Racah algebra \cite{Levy1965}. In \cite{zhedanov1988}, 
Granovski\v{i} and Zhedanov rediscovered the Racah algebra in an alternate presentation from the Racah problem for $\mathfrak{su}(2)$. The Racah algebra is now also explored in a broad range of areas including orthogonal polynomials,  distance regular graphs, superintegrable models and Leonard pairs \cite{LTRacah, LTRacah&DRG, Galbert, R&LD2014,gvz2014, R&BI2015,Kalnins1,Kalnins2, Kalnins3,Kalnins4, gvz2013, quadratic1991,quadratic1992, quadratic1989, hiddensymmetry1,hiddensymmetry3,Huang:R<BI,hidden_sym}. 
Given three parameters $\alpha,\beta,\gamma\in \F$ the Racah algebra has a presentation \cite[Section 1]{R&BI2015}
given by generators $A,B,C,D$ and the relations are
$$
[A,B]=[B,C]=[C,A]=2D
$$
and 
$$
\alpha=[A,D]+AC-BA,
\qquad
\beta=[B,D]+BA-CB,
\qquad 
\gamma=[C,D]+CB-AC.
$$
Inspired by \cite[Problem 12.1]{uaw2011}, we consider its central extension, denoted by $\Re$, obtained from the above presentation by reinterpreting the three parameters $\alpha,\beta,\gamma$ as central elements. We call $\Re$ the {\it universal Racah algebra} \cite{R&BI2015,SH:2019-1,SH:2017-1,Huang:R<BI}.

In \cite{Groenevelt2007} W. Groenevelt introduced an additive analogue of DAHA (double affine Hecke algebra) of type $(C_1^\vee,C_1)$ and used it to study generalized Fourier transforms. The additive DAHA of type $(C_1^\vee,C_1)$ also shows up in the context of Bannai--Ito polynomials \cite{BI&NW2016}. 
Given four parameters $k_0,k_1,k_0^\vee,k_1^\vee\in \F$ the algebra  has a presentation \cite[Proposition 2.12]{Groenevelt2007} given by generators  $t_0,t_1,t_0^\vee,t_1^\vee$ and relations
\begin{gather*}
t_0+t_1+t_0^\vee+t_1^\vee=-1,
\\
t_0^2=k_0,
\quad 
t_1^2=k_1,
\quad  
t_0^{\vee 2}=k_0^\vee, 
\quad 
t_1^{\vee 2}=k_1^\vee.
\end{gather*} 
In this paper we consider its central extension, denoted by $\H$,  obtained from the above presentation by reinterpreting the four parameters $k_0,k_1,k_0^\vee,k_1^\vee$ as central elements.
We call $\H$ the {\it universal additive DAHA of type $(C_1^\vee,C_1)$} \cite{Huang:BImodule,BI&NW2016}.

According to the results from \cite[Section 2]{R&BI2015} and \cite[Proposition 2]{BI&NW2016}, there exists a unique $\F$-algebra homomorphism $\zeta:\Re\to \H$ that sends
\begin{eqnarray*}
A &\mapsto & \frac{(t_1^\vee+t_0^\vee)(t_1^\vee+t_0^\vee+2)}{4},
\\
B &\mapsto & \frac{(t_1+t_1^\vee)(t_1+t_1^\vee+2)}{4},
\\
C &\mapsto & \frac{(t_0^\vee+t_1)(t_0^\vee+t_1+2)}{4}.
\end{eqnarray*}
The map $\zeta$ is shown to be injective \cite{Huang:R<BI} and the result can be considered as the Racah version of the algebra  monomorphism from the universal Askey--Wilson algebra into the universal DAHA of type $(C_1^\vee,C_1)$ given in \cite{DAHA2013}. By pulling back via $\zeta$, each $\H$-module is an $\Re$-module. Let $V$ denote a finite-dimensional irreducible $\H$-module. The set of $\Re$-submodules of $V$ forms a lattice under the inclusion partial order. 
The purpose of this paper is to classify the lattices that arise by this construction. In particular, we will see that the $\Re$-module $V$ is completely reducible if and only if $t_0$ is diagonalizable. Note that the classifications of finite-dimensional irreducible $\Re$-modules and $\H$-modules are given in \cite{SH:2019-1} and \cite{Huang:BImodule}, respectively.

The paper is organized as follows. In \S\ref{s:zeta} we give some preliminaries on $\Re$ and $\H$, as well as review the homomorphism $\zeta$ from $\Re$ into $\H$. In \S\ref{s:R&Hmodule} we lay the groundwork for the finite-dimensional irreducible $\Re$-modules and $\H$-modules. In \S\ref{s:lattice} we classify the lattices of $\Re$-submodules of finite-dimensional irreducible $\H$-modules. In \S\ref{s:proof} we end the paper with a summary of the classification and  its consequences.

\section{The universal Racah algebra and the universal additive DAHA of type $(C_1^\vee,C_1)$}\label{s:zeta}

\begin{defn}
[\!\!\cite{Levy1965,zhedanov1988,R&BI2015,HR2017}]
\label{defn:R}
The  {\it universal Racah algebra} $\Re$ is an $\F$-algebra defined by generators and relations in the following way. The generators are $A, B, C, D$ and the relations state that
\begin{gather}\label{r:D}
[A,B]=[B,C]=[C,A]=2D
\end{gather}
and each of
\begin{gather*}
[A,D]+AC-BA,
\qquad
[B,D]+BA-CB,
\qquad
[C,D]+CB-AC
\end{gather*}
commutes with $A,B,C,D$.  
\end{defn}

Let 
$$
\delta = A+B+C. 
$$

\begin{lem}\label{lem:delta}
\begin{enumerate}
\item The algebra $\Re$ is generated by $A,B,C$.

\item The algebra $\Re$ is generated by $A,B,\delta$.

\item The element $\delta$ is central in $\Re$.
\end{enumerate}
\end{lem}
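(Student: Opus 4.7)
The plan is to handle the three parts in the order (iii), (i), (ii), since (i) and (ii) are essentially trivial once one has a little control on the generators, and (iii) is the only computational claim.

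For (iii), I would directly verify that $\delta$ commutes with each of $A,B,C$ using only the relation \eqref{r:D}. Expanding, $[\delta,A]=[A,A]+[B,A]+[C,A]=-[A,B]+[C,A]=-2D+2D=0$, and analogous cancellations handle $[\delta,B]$ and $[\delta,C]$. This exploits the symmetric appearance of $2D$ in the three commutators $[A,B],[B,C],[C,A]$, which is the whole reason the sum $A+B+C$ is singled out. Once $\delta$ is known to commute with $A,B,C$, it will automatically commute with any algebra generated by them, so (iii) reduces to the claim that $\{A,B,C\}$ generates $\Re$, i.e.\ part (i).

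For (i), the defining generating set is $\{A,B,C,D\}$, and the relation $[A,B]=2D$ forces $D=\tfrac{1}{2}[A,B]$ to lie in the subalgebra generated by $A$ and $B$. Hence the subalgebra generated by $A,B,C$ already contains $D$, and thus equals $\Re$. Combined with the previous paragraph, this finishes (iii) as well.

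For (ii), simply observe $C=\delta-A-B$, so the subalgebra generated by $A,B,\delta$ contains $C$ and hence, by (i), equals $\Re$.

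I do not anticipate any real obstacle: the only non-formal step is the three-line commutator computation for (iii), and even that is essentially forced by the way \eqref{r:D} bundles the three pairwise commutators into a single element $D$. The rest is bookkeeping about generating sets.
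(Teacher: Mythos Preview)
Your proposal is correct and matches the paper's proof essentially line for line: the paper also derives (i) from $D=\tfrac12[A,B]$ via \eqref{r:D}, gets (ii) from $C=\delta-A-B$ together with (i), and proves (iii) by using \eqref{r:D} to show $\delta$ commutes with $A,B,C$ and then invoking (i). The only cosmetic difference is the order in which you present the parts.
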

\begin{proof}
(i): Immediate from (\ref{r:D}). 

(ii): Since $C=\delta-A-B$ and by (i) the statement (ii) follows.

(iii): By (\ref{r:D}) the element $\delta$ commutes with each of $A,B,C$. Hence (iii) follows by (i).
\end{proof}

\begin{defn}
[\!\!\!\cite{BI&NW2016,Groenevelt2007}] 
\label{defn:H}
The {\it universal additive DAHA {\rm (}double affine Hecke algebra{\rm)} $\H$ of type $(C_1^\vee,C_1)$} is an $\F$-algebra defined by generators and relations. The generators are $t_0,t_1,t_0^\vee,t_1^\vee$ and the relations state that  
\begin{gather}\label{t0+t1+t0'+t1'=-1}
t_0+t_1+t_0^\vee+t_1^\vee=-1
\end{gather}
and each of 
$
t_0^2,
t_1^2, 
t_0^{\vee 2}, 
t_1^{\vee 2}
$
commutes with $t_0,t_1,t_0^\vee,t_1^\vee$.
\end{defn}

Recall from \cite{tvz2012,BI2015,BI2016,BI2014,BI2014-2,BI2015-2,BI2017} that the Bannai--Ito algebra $\BI$ is an $\F$-algebra generated by $X,Y,Z$ and the relations assert that each of 
\begin{gather*}
\{X,Y\}-Z,
\qquad 
\{Y,Z\}-X,
\qquad 
\{Z,X\}-Y
\end{gather*}
is central in $\BI$. By \cite[Proposition 2]{BI&NW2016} there exists an $\F$-algebra isomorphism $\H\to \BI$ that sends 
\begin{eqnarray*}
t_0 &\mapsto& \frac{X+Y+Z}{2}-\frac{1}{4},
\\
t_1 &\mapsto& \frac{X-Y-Z}{2}-\frac{1}{4},
\\
t_0^\vee &\mapsto& \frac{Y-Z-X}{2}-\frac{1}{4},
\\
t_1^\vee &\mapsto& \frac{Z-X-Y}{2}-\frac{1}{4}.
\end{eqnarray*}

\begin{thm}
[\!\!\cite{R&BI2015,Huang:R<BI}]
\label{thm:hom}
There exists a unique $\F$-algebra homomorphism $\zeta:\Re\to \H$ that sends 
\begin{eqnarray*}
A &\mapsto & \frac{(t_1^\vee+t_0^\vee)(t_1^\vee+t_0^\vee+2)}{4},
\label{zeta:A}\\
B &\mapsto & \frac{(t_1+t_1^\vee)(t_1+t_1^\vee+2)}{4},
\label{zeta:B}\\
C &\mapsto & \frac{(t_0^\vee+t_1)(t_0^\vee+t_1+2)}{4},
\label{zeta:C}\\
\delta &\mapsto & \frac{t_0^2+t_1^2+t_0^{\vee 2}+t_1^{\vee 2}}{4}-\frac{t_0}{2}-\frac{3}{4}.
\label{zeta:delta}
\end{eqnarray*}
\end{thm}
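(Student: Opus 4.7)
My plan is to split the proof of Theorem~\ref{thm:hom} into (i) existence and uniqueness of the homomorphism as determined by the images of $A, B, C$, and (ii) a direct computation of $\zeta(\delta)$ from $\delta = A + B + C$. Uniqueness is immediate from Lemma~\ref{lem:delta}(i), since $A, B, C$ generate $\Re$. For existence, let $\hat A, \hat B, \hat C \in \H$ denote the proposed images and set $\hat D := \tfrac{1}{2}[\hat A, \hat B]$; by Definition~\ref{defn:R} it suffices to verify (a) $[\hat B, \hat C] = [\hat C, \hat A] = 2\hat D$, and (b) each of $[\hat A, \hat D] + \hat A\hat C - \hat B\hat A$, $[\hat B, \hat D] + \hat B\hat A - \hat C\hat B$, $[\hat C, \hat D] + \hat C\hat B - \hat A\hat C$ commutes with $\hat A, \hat B, \hat C$.

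The computation is cleanest after transferring to $\BI$ via the isomorphism $\H \to \BI$ recalled just before the theorem. Substituting the formulas for $t_0, t_1, t_0^\vee, t_1^\vee$ gives the much more symmetric expressions
\[
\hat A = \tfrac{1}{4}\bigl(X^2 - X - \tfrac{3}{4}\bigr), \qquad \hat B = \tfrac{1}{4}\bigl(Y^2 - Y - \tfrac{3}{4}\bigr), \qquad \hat C = \tfrac{1}{4}\bigl(Z^2 - Z - \tfrac{3}{4}\bigr).
\]
Write $\alpha_{XY} := \{X,Y\} - Z$, $\alpha_{YZ} := \{Y,Z\} - X$, $\alpha_{ZX} := \{Z,X\} - Y$ for the three central elements of $\BI$. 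A single substitution of the corresponding anticommutator relation gives the clean identities $[X^2, Y] = [X, Z]$, $[Y^2, Z] = [Y, X]$, $[Z^2, X] = [Z, Y]$, and consequently $[X^2, Y^2] = \{Y, [X, Z]\}$ together with its cyclic images. Expanding (a) yields
\[
16[\hat A, \hat B] = \{Y, [X, Z]\} - [X, Z] + [Y, Z] + [X, Y],
\]
and the equality $16[\hat A, \hat B] = 16[\hat B, \hat C]$ collapses to the three-line identity $\{Y, [X, Z]\} - \{Z, [Y, X]\} = [X, \{Y, Z\}] = [X, X + \alpha_{YZ}] = 0$. Equality with $16[\hat C, \hat A]$ then follows by cyclic symmetry. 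Step (b) proceeds along the same playbook: expand $\hat D$ using the displayed formula, compute each of $[\hat A, \hat D]$, $[\hat B, \hat D]$, $[\hat C, \hat D]$ by repeated use of the $[X^2, \cdot]$ identities, and verify term by term that the three resulting elements commute with $\hat A, \hat B, \hat C$, with all cancellations ultimately coming from the centrality of $\alpha_{XY}, \alpha_{YZ}, \alpha_{ZX}$.

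For the image of $\delta$, I use $\zeta(\delta) = \hat A + \hat B + \hat C$ directly in $\H$. Writing $4\hat A = (t_0^\vee + t_1^\vee)^2 + 2(t_0^\vee + t_1^\vee)$ and similarly for $\hat B, \hat C$, the linear terms sum to $2(2t_0^\vee + 2t_1 + 2t_1^\vee) = -4 - 4t_0$ via $t_0 + t_1 + t_0^\vee + t_1^\vee = -1$, while the quadratic terms contribute $2(t_1^2 + t_0^{\vee 2} + t_1^{\vee 2}) + \bigl(\{t_0^\vee, t_1^\vee\} + \{t_1, t_1^\vee\} + \{t_0^\vee, t_1\}\bigr)$, and the anticommutator sum equals $(t_0^\vee + t_1 + t_1^\vee)^2 - (t_1^2 + t_0^{\vee 2} + t_1^{\vee 2}) = (1 + t_0)^2 - (t_1^2 + t_0^{\vee 2} + t_1^{\vee 2})$. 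Combining gives $4\zeta(\delta) = t_0^2 + t_1^2 + t_0^{\vee 2} + t_1^{\vee 2} - 2t_0 - 3$, as claimed. The main obstacle is the sheer bulk of step (b): $\hat D$ is already a fairly heavy element of $\BI$, so each of $[\hat A, \hat D]$ and the subsequent commutators with $\hat A, \hat B, \hat C$ produces many terms, and the verification reduces to a careful bookkeeping exercise, made manageable only by the cyclic symmetry and by the $[X^2, \cdot]$ identities that reduce commutators of quadratic expressions to commutators of linear ones.
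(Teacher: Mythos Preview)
The paper does not give its own proof of Theorem~\ref{thm:hom}; the result is imported from \cite{R&BI2015,Huang:R<BI} and stated without argument. So there is no in-paper proof to compare against, and your proposal is effectively a from-scratch verification.

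Your outline is sound. The uniqueness argument via Lemma~\ref{lem:delta}(i) is correct, and the computation of $\zeta(\delta)$ is carried out carefully and checks out. The transfer to $\BI$ is a good move: the expressions $\hat A=\tfrac14(X^2-X-\tfrac34)$ etc.\ are indeed the images under the stated isomorphism, and your commutator identities $[X^2,Y]=[X,Z]$ (and cyclic variants, plus $[X^2,Z]=[X,Y]$ etc.) follow immediately from the centrality of $\{X,Y\}-Z$, $\{Y,Z\}-X$, $\{Z,X\}-Y$. Step~(a) is fully justified: the reduction of $16[\hat A,\hat B]-16[\hat B,\hat C]$ to $[X,\{Y,Z\}]=[X,X+\alpha_{YZ}]=0$ is clean and correct. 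One small omission: the defining relations of $\Re$ require the three displayed elements to commute with $D$ as well as $A,B,C$; you only mention $\hat A,\hat B,\hat C$. This is harmless since $\hat D=\tfrac12[\hat A,\hat B]$, so centrality with respect to $\hat A,\hat B$ already forces centrality with respect to $\hat D$, but it is worth saying explicitly.

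The only soft spot is step~(b), which you acknowledge is the bulk of the work and leave as a bookkeeping exercise. That is an honest assessment, and the tools you list (cyclic symmetry, the $[X^2,\cdot]$ identities) are exactly the right ones; but be aware that the cited references (in particular \cite{R&BI2015}) organize this computation somewhat differently, essentially by first identifying the three ``central'' combinations with explicit quadratic expressions in the central elements of $\BI$, which makes the centrality check immediate rather than a term-by-term cancellation. If you carry out~(b) by brute force as described, it will work, but it will be long.
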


By Theorem \ref{thm:hom} each $\H$-module is an $\Re$-module by pulling back via $\zeta$.

\section{Finite-dimensional irreducible $\Re$-modules and $\H$-modules}\label{s:R&Hmodule}

In \S\ref{s:Rmodule} we recall some results on the finite-dimensional irreducible $\Re$-modules from \cite{SH:2019-1}. In \S\ref{s:BImodule_even} and \S\ref{s:BImodule_odd} we rephrase some results on the finite-dimensional irreducible $\BI$-modules from \cite{Huang:BImodule} in terms of the $\H$-modules.

For convenience, we always let $a,b,c$ denote three scalars taken from $\F$ in the rest of this paper.

\subsection{Finite-dimensional irreducible $\Re$-modules}\label{s:Rmodule}

\begin{prop}
[\!\!\!\cite{SH:2019-1}]
\label{prop:Rd}
For any integer $d\geq 0$, there exists a $(d+1)$-dimensional $\Re$-module $R_d(a,b,c)$  satisfying the following conditions {\rm (i)}, {\rm (ii)}: 
\begin{enumerate}
\item There exists an $\F$-basis for $R_d(a,b,c)$ with respect to which the matrices representing $A$ and $B$ are 
$$
\begin{pmatrix}
\theta_0 & & &  &{\bf 0}
\\ 
1 &\theta_1 
\\
&1 &\theta_2
 \\
& &\ddots &\ddots
 \\
{\bf 0} & & &1 &\theta_d
\end{pmatrix},
\qquad 
\begin{pmatrix}
\theta_0^* &\varphi_1 &  & &{\bf 0}
\\ 
 &\theta_1^* &\varphi_2
\\
 &  &\theta_2^* &\ddots
 \\
 & & &\ddots &\varphi_d
 \\
{\bf 0}  & & & &\theta_d^*
\end{pmatrix}
$$
respectively, where 
\begin{align*}
\theta_i 
&=
(a+\textstyle\frac{d}{2}-i)
(a+\textstyle\frac{d}{2}-i+1)
\qquad 
(0\leq i\leq d),
\\
\theta^*_i 
&=
(b+\textstyle\frac{d}{2}-i)(b+\textstyle\frac{d}{2}-i+1)
\qquad 
(0\leq i\leq d),
\\
\varphi_i 
&=i(i-d-1)(a+b+c+\textstyle\frac{d}{2}-i+2)(a+b-c+\textstyle\frac{d}{2}-i+1)
\qquad 
(1\leq i\leq d).
\end{align*}

\item The element $\delta$ acts on $R_d (a,b,c)$ as scalar multiplication by
$$
\textstyle\frac{d}{2} (\textstyle\frac{d}{2}+1)+a(a+1)+b(b+1)+c(c+1).
$$
\end{enumerate} 
\end{prop}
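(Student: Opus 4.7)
The plan is to construct $R_d(a,b,c)$ explicitly as a matrix representation and verify the defining relations of $\Re$ directly. Let $R_d(a,b,c)$ be the $(d+1)$-dimensional $\F$-vector space with basis $v_0, v_1, \ldots, v_d$, and define linear maps $A$ and $B$ by the two bidiagonal matrices in (i). Declare $\delta$ to act as the scalar
\[
s := \tfrac{d}{2}\bigl(\tfrac{d}{2}+1\bigr)+a(a+1)+b(b+1)+c(c+1),
\]
and set $C := sI - A - B$ and $D := \tfrac{1}{2}[A,B]$. With these four matrices prescribed, (ii) holds by fiat and what remains is to check the relations of Definition \ref{defn:R}.

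The first block \eqref{r:D} is essentially free. By definition $[A,B]=2D$, and because $s$ commutes with $A$ and with $B$,
\[
[B,C] = [B,\,sI-A-B] = -[B,A] = [A,B] = 2D,
\]
and the same computation yields $[C,A]=2D$.

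The substantive task is to show that each of
\[
P := [A,D]+AC-BA, \qquad Q := [B,D]+BA-CB, \qquad R := [C,D]+CB-AC
\]
is central in the matrix algebra generated by $A,B$. Since $C$ and $D$ are already built from $A$ and $B$, it suffices to verify that $P$, $Q$, $R$ each commute with $A$ and with $B$. The cleanest route --- which I would pursue --- is to prove that $P$, $Q$, $R$ act on $R_d(a,b,c)$ as \emph{scalar} matrices. Because $A$ is lower bidiagonal and $B$ upper bidiagonal, $D$ is tridiagonal, and a routine bookkeeping shows that $P$, $Q$, $R$ are tridiagonal in the basis $\{v_i\}$. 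So it remains to check that (a) the off-diagonal entries of $P,Q,R$ vanish on every $v_i$, and (b) the diagonal entries are independent of $i$.

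The main obstacle is (b). On $v_i$ the diagonal entry of, say, $P$ is a polynomial in $i$ built out of products of $\theta_i, \theta_i^*, \varphi_i$ and the neighbouring values $\theta_{i\pm 1}, \theta_{i\pm 1}^*, \varphi_{i\pm 1}$; forcing this $i$-dependence to collapse hinges on the quadratic shape of $\theta_i$ and $\theta_i^*$ (so that their first differences are linear in $i$) and on the factorisation of $\varphi_i$ into four linear factors in $i$. The algebra is elementary but lengthy, and separate but structurally parallel computations handle $Q$ and $R$. Once carried through, the common diagonal values emerge as cubic polynomials in $a,b,c$ (the ``Racah parameters'' $\alpha,\beta,\gamma$ attached to $R_d(a,b,c)$), and since scalar matrices commute with everything in $\mathrm{End}(R_d(a,b,c))$, this completes the verification of all relations of $\Re$.
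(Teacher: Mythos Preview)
Your approach is sound, and in fact the paper gives no proof of this proposition at all: it is imported wholesale from \cite{SH:2019-1}, so there is no in-paper argument to compare against. The direct verification you outline---defining $A,B$ by the given bidiagonal matrices, setting $C=sI-A-B$ and $D=\tfrac12[A,B]$, and then checking that $P,Q,R$ act as scalars---is exactly the natural route and is essentially what the cited reference does.

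One small remark on the bookkeeping step: when you say ``$P,Q,R$ are tridiagonal'', note that this is not automatic from bandwidth counting alone. For instance, $[A,D]$ by itself has a nonzero $(i+2,i)$ entry (it equals $\tfrac12(\theta_i^*-2\theta_{i+1}^*+\theta_{i+2}^*)=1$, the second difference of a quadratic), and this is cancelled precisely by the $(i+2,i)$ entry of $-A^2$ coming from $AC=sA-A^2-AB$. So the tridiagonality of $P$ already uses the quadratic form of $\theta_i^*$, not just the shape of the matrices. The same kind of cancellation occurs for $Q$ on the upper side. This does not affect the correctness of your plan, but it is worth flagging that the ``routine bookkeeping'' is where the specific formulas for $\theta_i,\theta_i^*,\varphi_i$ first enter, not only in step (b).
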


\begin{prop}
[\!\!\!\cite{SH:2019-1}]
\label{prop:irr_R}
For any integer $d\geq 0$, the $\Re$-module $R_d(a,b,c)$ is irreducible if and only if $$
a+b+c+1,-a+b+c,a-b+c,a+b-c\not\in
\left
\{\frac{d}{2}-i\,\bigg|\,i=1,2,\ldots,d
\right\}.
$$
\end{prop}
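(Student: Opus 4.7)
Since $\delta$ is central in $\Re$ by Lemma~\ref{lem:delta}(iii), $\Re$ is generated by $A, B, \delta$ by Lemma~\ref{lem:delta}(ii), and $\delta$ acts as a scalar on $R_d(a,b,c)$ by Proposition~\ref{prop:Rd}(ii), the $\Re$-submodules of $R_d(a,b,c)$ coincide with the subspaces simultaneously invariant under $A$ and $B$. I would split the proof along the two implications.

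For the ``only if'' direction I would argue contrapositively. With $v_0, \ldots, v_d$ the basis of Proposition~\ref{prop:Rd}(i), each subspace $V_i := \mathrm{span}(v_i, v_{i+1}, \ldots, v_d)$ is $A$-invariant by the lower-bidiagonal form of $A$, and is $B$-invariant if and only if $\varphi_i = 0$. Reading off the factorization
\[
\varphi_i = i(i-d-1)\,\bigl(a+b+c+\tfrac{d}{2}-i+2\bigr)\,\bigl(a+b-c+\tfrac{d}{2}-i+1\bigr)
\]
and substituting $i \mapsto d+1-j$, the vanishing of the third factor for some $i \in \{1,\ldots,d\}$ is equivalent to $a+b+c+1 \in \{\tfrac{d}{2}-j : j=1,\ldots,d\}$, and the vanishing of the fourth factor is equivalent to $a+b-c$ lying in that same set. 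So if either of these two hypotheses fails, some $V_i$ is a proper nonzero $\Re$-submodule. For the two remaining hypotheses involving $-a+b+c$ and $a-b+c$, I would invoke the $S_3$-symmetry that cyclically permutes the generators $A$, $B$, $C$ of $\Re$: pulling back along this automorphism, the same module $R_d(a,b,c)$ admits alternative split tridiagonal descriptions relative to the pairs $(B,C)$ and $(C,A)$, whose corresponding $\varphi$-quantities factor with linear factors involving $b+c-a = -a+b+c$ and $c+a-b = a-b+c$ respectively. Vanishing of these factors yields analogous proper nonzero $\Re$-submodules coming from the alternate bases.

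For the converse direction, assume all four conditions hold, and let $W$ be a nonzero $(A,B)$-invariant subspace. The identity $(A-\theta_i)v_i = v_{i+1}$ for $0 \leq i < d$ shows that $v_0$ generates $R_d(a,b,c)$ as an $\Re$-module, so it suffices to prove $v_0 \in W$. Starting from a nonzero $w \in W$ whose smallest-index support is at $k$, the vector $(B-\theta_k^*)w$ lies in $W$ and has $v_{k-1}$-coefficient equal to the $v_k$-coefficient of $w$ times $\varphi_k$, which is nonzero since $\varphi_k \neq 0$ by the two hypotheses on $a+b+c+1$ and $a+b-c$. Iterating this lowering produces $w_0 \in W$ with nonzero $v_0$-coefficient. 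To extract $v_0$ itself from $w_0$, I would run the analogous lowering procedure in an alternate split basis provided by the cyclic symmetry, in which the non-vanishing of the corresponding $\varphi$-quantities is precisely guaranteed by the remaining hypotheses on $-a+b+c$ and $a-b+c$; combining these lowerings traps $v_0$ inside $W$.

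The main obstacle will be the final extraction of $v_0$ in the converse. When the $B$-eigenvalues $\theta_0^*, \ldots, \theta_d^*$ are all distinct one could instead apply the projection $\prod_{j=1}^{d}(B-\theta_j^*)$ onto $\mathrm{span}(v_0)$, but in degenerate cases where some $\theta_i^*$'s collide this fails, as $\prod_{j\geq 1}(B-\theta_j^*)v_0$ itself becomes zero. It is precisely in these coincidences that the two conditions on $-a+b+c$ and $a-b+c$ --- invisible from the $\varphi_i$ of the original $(A,B)$-basis --- are essential, ruling out hidden common eigenvectors of $A$ and $B$ (as can be exhibited explicitly in small-dimensional examples such as $d=2$ with $a=b=1$, $c=0$) that would otherwise generate proper invariant subspaces, and coordinating the three split-basis arguments is the conceptual crux.
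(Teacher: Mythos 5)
First, note that the paper does not prove this proposition at all: it is quoted from \cite{SH:2019-1}, so there is no in-paper argument to compare against and your proposal has to stand on its own. Much of it is sound: the reduction to subspaces invariant under $A$ and $B$ (via Lemma \ref{lem:delta} and the scalar action of $\delta$), the observation that $\varphi_i\neq 0$ for all $i$ is exactly equivalent to the two conditions on $a+b+c+1$ and $a+b-c$, the tail submodules $\mathrm{span}(v_i,\dots,v_d)$ when some $\varphi_i=0$, the lowering step producing an element of $W$ with nonzero $v_0$-coefficient, and the $d=2$, $a=b=1$, $c=0$ example of a hidden common eigenvector are all correct.

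However, there are two genuine gaps, and they sit exactly at the crux. (1) Everything you say about the conditions on $-a+b+c$ and $a-b+c$ rests on the unproven claim that $R_d(a,b,c)$ admits split bidiagonal bases relative to $(B,C)$ and $(C,A)$ whose $\varphi$-quantities factor with the linear factors $b+c-a$ and $c+a-b$; equivalently, that the twist of $R_d(a,b,c)$ by the cyclic automorphism $A\to B\to C\to A$ is isomorphic to $R_d$ with permuted parameters for \emph{all} parameter values. This cannot be extracted from the symmetry of the defining relations together with the classification of irreducible modules, because you need it precisely at the degenerate (reducible) parameter values, where modules are not determined by their eigenvalue data; it has to be established by an explicit change-of-basis computation of the same magnitude as Proposition \ref{prop:Rd} itself (compare the explicit computations the paper carries out in Lemmas \ref{lem:AB_Ed}--\ref{lem:AB_V(-d-1/2)}). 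Without that lemma neither the ``only if'' direction for the last two conditions nor your fallback in the converse is available. (2) Even granting that lemma, the final step of the converse, ``combining these lowerings traps $v_0$ inside $W$,'' is not an argument. Knowing that $W$ contains a vector with nonzero bottom coefficient in each of two different bases does not force $v_0\in W$ or $W=V$; the bottom vectors of the two bases are different, and the passage from ``nonzero $v_0$-coefficient'' to ``$v_0\in W$'' is exactly the difficulty when the $\theta_i^*$ collide. Such collisions occur for genuinely irreducible modules (e.g.\ $d=1$, $b=-\tfrac12$ with $a,c$ generic gives $\theta_0^*=\theta_1^*=0$), so this case cannot be dismissed; one needs an actual mechanism there, for instance passing to a $B$-eigenvector of $W$ (all $B$-eigenspaces are one-dimensional when every $\varphi_i\neq0$) and proving, using the remaining two conditions quantitatively, that any such eigenvector generates $V$. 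As written, the proposal identifies where the difficulty lies but does not resolve it, so it is a plan rather than a proof.
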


\begin{thm}
[\!\!{\cite{SH:2019-1}}]
Let $d\geq 0$ denote an integer. If $V$ is a $(d+1)$-dimensional irreducible $\Re$-module then there exist $a,b,c\in \F$ such that $R_d(a,b,c)$ is isomorphic to $V$.
\end{thm}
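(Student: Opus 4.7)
My plan is to construct an explicit $\F$-basis of $V$ in which $A$ and $B$ act by the bidiagonal matrices of Proposition~\ref{prop:Rd}, for a suitably chosen triple $(a,b,c) \in \F^3$. Since $V$ is finite-dimensional and irreducible over the algebraically closed field $\F$, Schur's lemma forces every central element of $\Re$ to act on $V$ as a scalar. In particular the central elements $\delta = A+B+C$, $\alpha := [A,D]+AC-BA$, $\beta := [B,D]+BA-CB$, $\gamma := [C,D]+CB-AC$ act as scalars $\delta_V, \alpha_V, \beta_V, \gamma_V \in \F$. Adding the three relations defining $\alpha, \beta, \gamma$ and using centrality of $\delta$ gives $\alpha_V + \beta_V + \gamma_V = 0$, so only three independent central scalars occur, matching the three parameters of $R_d(a,b,c)$.

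Next I would pick an eigenvector $v_0$ of $A$ with eigenvalue $\theta_0 \in \F$, which exists since $\F$ is algebraically closed, and choose $a \in \F$ with $\theta_0 = (a+\tfrac{d}{2})(a+\tfrac{d}{2}+1)$. Using Proposition~\ref{prop:Rd}(ii) together with an explicit computation of the scalars by which $\alpha$ and $\beta$ act on $R_d(a,b,c)$ (read off from the bidiagonal matrices), the equations matching these to $\delta_V, \alpha_V, \beta_V$ become polynomial equations in $b$ and $c$ with $a$ fixed, and these are solvable over the algebraically closed $\F$. With $a,b,c$ in hand, set $\theta_i = (a+\tfrac{d}{2}-i)(a+\tfrac{d}{2}-i+1)$ and define recursively $v_i = (A-\theta_{i-1})v_{i-1}$ for $1 \leq i \leq d$. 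The heart of the argument is to show, using the algebra relations together with the substitution $C = \delta_V - A - B$, that $B v_i$ lies in the span of $v_i$ and $v_{i+1}$ with coefficients $\theta_i^*$ and $\varphi_{i+1}$ exactly as in Proposition~\ref{prop:Rd}. Once this holds, the $\F$-span of $v_0, \ldots, v_d$ is stable under $A$, $B$, and $\delta$, hence under all of $\Re$ by Lemma~\ref{lem:delta}(ii); irreducibility forces this span to equal $V$, forces linear independence of the $v_i$ together with $v_{d+1}=0$, and yields the isomorphism $V \cong R_d(a,b,c)$.

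The main obstacle is the inductive computation that $B v_i$ has precisely the prescribed two-term form with the stated coefficients. A priori the algebra relations only force $B v_i$ to be some combination of $v_0, \ldots, v_{i+1}$, and pinning down the coefficients requires careful extraction of information from the central scalars $\alpha_V, \beta_V, \delta_V$ via the relation defining $\alpha$ and its iterates. Equally delicate is ruling out premature termination: one must verify that $v_i \neq 0$ for $i \leq d$, which uses the dimension hypothesis $\dim V = d+1$ together with the explicit form of $\varphi_i$ and the irreducibility of $V$ to exclude proper $\Re$-submodules that would arise if $v_k = 0$ for some $k \leq d$. Once these computations are in place, the identification of $V$ with $R_d(a,b,c)$ is immediate.
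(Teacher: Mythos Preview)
The paper does not prove this theorem; it is stated with a citation to \cite{SH:2019-1} and no argument is given in the present paper. So there is no in-paper proof to compare your proposal against.

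Your proposal contains a concrete error that makes the construction collapse at the first step. You take $v_0$ to be an eigenvector of $A$ with eigenvalue $\theta_0$ and then set $v_1=(A-\theta_0)v_0$; but if $Av_0=\theta_0 v_0$ then $(A-\theta_0)v_0=0$, so $v_1=0$ and the recursion produces nothing. In the basis of Proposition~\ref{prop:Rd} the matrix of $A$ is lower bidiagonal, so the $A$-eigenvector is $v_d$, not $v_0$; the vector $v_0$ is an eigenvector of $B$ (with eigenvalue $\theta_0^*$), and it is from such a $B$-eigenvector that the recursion $v_i=(A-\theta_{i-1})v_{i-1}$ correctly regenerates the basis. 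As written, your choice of starting vector and your raising operator are mismatched.

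Even after swapping $A$ and $B$ in the choice of starting eigenvector, a second difficulty remains that you have not addressed: you must pick the \emph{right} eigenvector. If $B$ has repeated eigenvalues, an arbitrary $B$-eigenvector need not satisfy $(A-\theta_{d-1})\cdots(A-\theta_0)v_0\neq 0$, and your determination of the parameter $a$ from ``the $A$-eigenvalue on $v_0$'' no longer makes sense once $v_0$ is not an $A$-eigenvector. The standard way around this is to start from an $A$-eigenvector (playing the role of $v_d$) and lower with $B$, or to locate a genuine highest-weight vector using the Leonard-pair/tridiagonal structure forced by the defining relations of $\Re$; either way one must argue that the diagonal entries $\theta_i$, $\theta_i^*$ and the off-diagonal entries $\varphi_i$ are determined by the central scalars, and this is where the real work in \cite{SH:2019-1} lies.
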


\subsection{Even-dimensional irreducible $\H$-modules}\label{s:BImodule_even}

\begin{prop}
[\!\!\cite{Huang:BImodule}]
\label{prop:Ed} 
For any odd integer $d\geq 1$, there exists a $(d+1)$-dimensional $\H$-module $E_d(a,b,c)$ that has an $\mathbb F$-basis $\{v_i\}_{i=0}^d$ such that
\allowdisplaybreaks
\begin{align}
t_0 v_i &=
\left\{
\begin{array}{ll}
\displaystyle 
i (d-i+1) v_{i-1}
-\frac{d-2i+1}{2} v_{i}
\qquad &\hbox{for $i=2,4,\ldots,d-1$,}
\\
\displaystyle 
\frac{d-2i-1}{2} v_{i}
+ 
v_{i+1}
\qquad &\hbox{for $i=1,3,\ldots,d-2$,}
\end{array}
\right.
\label{t0:Ed-1}
\\
t_0 v_0 &=-\frac{d+1}{2} v_0,
\qquad 
t_0 v_d =-\frac{d+1}{2} v_d,
\label{t0:Ed-2}
\\
t_1 v_i &= 
\left\{
\begin{array}{ll}
\displaystyle 
i (i-d-1) v_{i-1}+a v_i+ v_{i+1}
\qquad 
&\hbox{for $i=2,4,\ldots,d-1$},
\\
-a v_i
\qquad 
&\hbox{for $i=1,3,\ldots,d$},
\end{array}
\right.
\label{t1:Ed-1}
\\
t_1 v_0&=a v_0+ v_1,
\label{t1:Ed-2}
\\
t_0^\vee v_i &=
\left\{
\begin{array}{ll}
b v_i 
\qquad 
&\hbox{for $i=0,2,\ldots,d-1$},
\\
-(\sigma+i)(\tau+i) v_{i-1}
-b v_i-v_{i+1}
\qquad 
&\hbox{for $i=1,3,\ldots,d-2$},
\end{array}
\right.
\label{t0vee:Ed-1}
\\
t_0^\vee v_d &=
-(\sigma+d)(\tau+d) v_{d-1}
-b v_{d},
\label{t0vee:Ed-2}
\\
t_1^\vee v_i &=
\left\{
\begin{array}{ll}
\displaystyle 
-\frac{\sigma+\tau+2i+2}{2} v_i
-v_{i+1}
\qquad 
&\hbox{for $i=0,2,\ldots,d-1$},
\\
\displaystyle 
(\sigma+i)(\tau+i) v_{i-1}+
\frac{\sigma+\tau+2i}{2} v_i
\qquad 
&\hbox{for $i=1,3,\ldots,d$},
\end{array}
\right.
\label{t1vee:Ed}
\end{align}
where 
\begin{gather*}
\sigma
=
a+b+c-\frac{d+1}{2},
\qquad 
\tau
=
a+b-c-\frac{d+1}{2}.
\end{gather*}
\end{prop}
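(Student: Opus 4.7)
The plan is a direct verification. Define four linear endomorphisms $t_0,t_1,t_0^\vee,t_1^\vee$ of $V=\bigoplus_{i=0}^{d}\F v_i$ by the action formulas given in the statement, and then check the defining relations of $\H$ from Definition~\ref{defn:H}. These relations consist of (a) the linear relation $t_0+t_1+t_0^\vee+t_1^\vee=-1$, and (b) the centrality of each of $t_0^2,\;t_1^2,\;t_0^{\vee 2},\;t_1^{\vee 2}$. For (b) I would actually prove the stronger statement that each of these four squares acts on $V$ as a scalar; since scalar operators trivially commute with every generator, centrality follows immediately and one avoids four-way commutator checks.

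For (a), I would verify $(t_0+t_1+t_0^\vee+t_1^\vee)v_i=-v_i$ by splitting on the parity of $i$ and on the boundary cases $i\in\{0,d\}$. Using the identity $\sigma+\tau=2a+2b-(d+1)$, the off-diagonal contributions to $v_{i\pm 1}$ cancel by inspection of signs, and each diagonal coefficient reduces to $-1$. As a sanity check, at $i=0$ the four actions contribute $\bigl(-\tfrac{d+1}{2}+a+b-\tfrac{\sigma+\tau+2}{2}\bigr)v_0+(1-1)v_1$, whose $v_0$-coefficient simplifies to $-\tfrac{d+1}{2}+\tfrac{d-1}{2}=-1$, as required.

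For (b) I would compute $t_0^2 v_i$, $t_1^2 v_i$, $t_0^{\vee 2}v_i$, $t_1^{\vee 2}v_i$ on a generic basis vector. The expected scalar values are $t_0^2=\tfrac{(d+1)^2}{4}$, $t_1^2=a^2$, $t_0^{\vee 2}=b^2$, and a polynomial in $\sigma,\tau$ for $t_1^{\vee 2}$. In each case the two-branch parity structure of the action formulas forces the off-diagonal contributions along $v_{i\pm 2}$ and $v_{i\pm 1}$ to cancel in pairs, and the surviving diagonal coefficient collapses to a scalar independent of $i$. The reduction is to a collection of polynomial identities in $a,b,c,d,i$, each verifiable by direct expansion.

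The main obstacle is bookkeeping rather than depth. Each operator has two-branch formulas with distinct behavior at $i=0$ and $i=d$, so the verification fragments into many subcases. The most intricate piece is $t_1^{\vee 2}$, because $t_1^\vee$ has nonzero matrix entries at $v_{i-1},v_i,v_{i+1}$ on both parity branches, so the cross-terms involving the products $(\sigma+i)(\tau+i)$ must telescope correctly against coefficients of the opposite parity. A less computational alternative, suggested by the introduction to this section, is to transport a known irreducible $\BI$-module from \cite{Huang:BImodule} through the explicit isomorphism $\H\to\BI$ recalled in \S\ref{s:zeta}; the proposition would then reduce to matching the resulting action formulas with those asserted here.
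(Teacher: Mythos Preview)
Your verification strategy is sound and would succeed: defining the four operators by the displayed formulas and checking (a) the linear relation and (b) that each square acts as a scalar is exactly what is needed, and your sample computation at $i=0$ is correct. One small refinement: the scalar for $t_1^{\vee 2}$ is not just ``a polynomial in $\sigma,\tau$'' but precisely $c^2=\bigl(\tfrac{\sigma-\tau}{2}\bigr)^2$; this is what the paper records immediately afterwards as Lemma~\ref{lem:t^2_Ed}, and it makes the $t_1^{\vee}$ case no harder than the other three once you use $(\sigma+i)(\tau+i)$ symmetrically.

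As for comparison with the paper: there is nothing to compare against, because the paper does not prove Proposition~\ref{prop:Ed}. It is imported verbatim from \cite{Huang:BImodule}, where the corresponding $\BI$-modules are constructed, and is restated here in $\H$-language via the isomorphism $\H\to\BI$ recalled just before Theorem~\ref{thm:hom}. Your closing remark already anticipates this: transporting the $\BI$-module through that isomorphism is precisely how the result enters the present paper. So your direct verification is a self-contained alternative to citing the reference; it is more laborious but has the virtue of not depending on the external classification.
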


\begin{lem}\label{lem:t^2_Ed}  
For any odd integer $d\geq 1$, the elements $t_0^2,t_1^2,t_0^{\vee 2}, t_1^{\vee 2}$ act on $E_d(a,b,c)$ as scalar multiplication by 
$
\frac{(d+1)^2}{4}, 
a^2,
b^2,
c^2
$
respectively.
\end{lem}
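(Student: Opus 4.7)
The plan is a direct computation on the given basis $\{v_i\}_{i=0}^d$. The piecewise formulas in Proposition~\ref{prop:Ed} split according to the parity of $i$, so applying any of the four generators twice sends $v_i$ to a vector supported only on indices of the same parity as $i$. The claim is that in every case the result collapses to a scalar multiple of $v_i$, and one simply reads off the scalar.

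For $t_1$ the odd-index branch is already diagonal, since $t_1 v_i=-a\,v_i$ for odd $i$ by \eqref{t1:Ed-1}. Thus when $i$ is even I expand $t_1^2 v_i$ using the tridiagonal formula \eqref{t1:Ed-1}, apply $t_1$ termwise to $v_{i-1}, v_i, v_{i+1}$, and observe that the $v_{i-1}$ and $v_{i+1}$ contributions cancel against each other, leaving $a^2 v_i$. The endpoint $v_0$ is handled by \eqref{t1:Ed-2} in one line. The argument for $t_0^\vee$ is symmetric, with the roles of parity interchanged and $b$ replacing $a$: the even-index branch is the diagonal $t_0^\vee v_i=b\,v_i$, and the odd-index branch has a tridiagonal form whose cross-terms cancel. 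The endpoint $v_d$ is handled by \eqref{t0vee:Ed-2}.

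For $t_0$ and $t_1^\vee$ neither parity branch is diagonal, so the verification needs two small algebraic identities. Expanding $t_0^2 v_i$ from \eqref{t0:Ed-1}--\eqref{t0:Ed-2}, the $v_{i\pm 1}$ contributions coming from the two branches cancel, and the surviving coefficient of $v_i$ takes the form $\frac{(d+1-2j)^2}{4}+j(d-j+1)$ with $j=i$ or $j=i+1$ depending on parity; the identity $(k-2j)^2/4+j(k-j)=k^2/4$ with $k=d+1$ reduces this to $\frac{(d+1)^2}{4}$. Expanding $t_1^{\vee 2} v_i$ from \eqref{t1vee:Ed}, the off-diagonal terms cancel analogously, and the coefficient of $v_i$ simplifies via the identity $\frac{(p+q)^2}{4}-pq=\frac{(p-q)^2}{4}$, applied with $p=\sigma+i+1$, $q=\tau+i+1$ on the even branch (and shifted by one on the odd branch). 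Since $\sigma-\tau=2c$ from the definitions in Proposition~\ref{prop:Ed}, the resulting scalar is $c^2$.

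The main obstacle is not conceptual but bookkeeping: one must check that at $i=0$ and $i=d$ the piecewise rules do refer to an existing neighbor and that the correct branch is invoked, so that no off-diagonal term is silently dropped. I would therefore present the argument as four short parallel cases (one per generator), in each case covering the two parity branches and then the one or two endpoint vectors separately using the stand-alone equations \eqref{t0:Ed-2}, \eqref{t1:Ed-2}, \eqref{t0vee:Ed-2}, and the last line of \eqref{t1vee:Ed}.
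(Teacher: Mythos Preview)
Your proposal is correct and follows exactly the approach the paper uses: the paper's proof is the single sentence ``Apply Proposition~\ref{prop:Ed} to evaluate the actions of $t_0^2,t_1^2,t_0^{\vee 2}, t_1^{\vee 2}$ on $E_d(a,b,c)$,'' and you have spelled out precisely that computation, including the two algebraic identities that make the $t_0$ and $t_1^\vee$ cases collapse. One cosmetic point: for $t_1^\vee$ there is in fact no separate endpoint equation in Proposition~\ref{prop:Ed}---the even and odd branches of \eqref{t1vee:Ed} already cover $0\le i\le d$---so your generic parity argument handles $v_0$ and $v_d$ without extra cases.
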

\begin{proof}
Apply Proposition \ref{prop:Ed} to evaluate the actions of $t_0^2,t_1^2,t_0^{\vee 2}, t_1^{\vee 2}$ on $E_d(a,b,c)$.
\end{proof}

\begin{prop}
[\!\!\cite{Huang:BImodule}]
\label{prop:irr_E} 
For any odd integer $d\geq 1$, the $\H$-module 
$
E_d(a,b,c)
$ 
is irreducible if and only if 
$$
a+b+c, -a+b+c, a-b+c, a+b-c\not\in \left\{
\displaystyle{\frac{d-1}{2}-i}\,\bigg|\,i=0,2,\ldots,d-1
\right\}.
$$
\end{prop}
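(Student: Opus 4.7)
My plan is to work directly from the matrix representations in Proposition~\ref{prop:Ed}. The preliminary observation is that each of $t_0, t_1, t_0^\vee, t_1^\vee$ acts tridiagonally on the basis $\{v_i\}_{i=0}^d$. At every relevant column, one of $t_0, t_1$ contributes a subdiagonal entry equal to $1$ and one of $t_0^\vee, t_1^\vee$ contributes one equal to $-1$, while the superdiagonal entries at column $i$ are purely combinatorial (proportional to $i(d-i+1)$ or $i(i-d-1)$) for even $i \in \{2, \ldots, d-1\}$ and proportional to $(\sigma+i)(\tau+i)$ for odd $i \in \{1, 3, \ldots, d\}$.

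For sufficiency, I assume the four non-vanishing conditions and let $W$ be a nonzero $\H$-submodule. The conditions on $a+b+c$ and $a+b-c$ are equivalent to $(\sigma+i)(\tau+i)\neq 0$ for all odd $i\in\{1,3,\dots,d\}$, so every sub- and super-diagonal entry appearing in the four generator matrices is nonzero. Consequently, starting from any single $v_k\in W$, one can successively apply the generators (using $t_0$ or $t_1$ to climb from odd or even indices and $t_0^\vee$ or $t_1^\vee$ together with the $(\sigma+i)(\tau+i)$ factors to descend) to produce every $v_j$, giving $W=E_d(a,b,c)$. The nontrivial step is to show that $W$ contains at least one basis vector. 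For this I would examine the simultaneous generalized eigenspaces of the composite operators $A, B, C$ arising from the pullback $\zeta$ of Theorem~\ref{thm:hom}; after diagonalizing the lower-triangular operators $t_0^\vee+t_1^\vee$, $t_1+t_1^\vee$, and $t_0^\vee+t_1$ in the basis $\{v_i\}$, one finds that the eigenvalues of $A,B,C$ are quadratic expressions in the four linear combinations $\pm a\pm b+c$. The conditions on $-a+b+c$ and $a-b+c$ are exactly what is needed to ensure sufficient eigenvalue distinctness so that a common eigenvector lies in $W$, after which the propagation argument finishes the job.

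For necessity, if some condition fails one exhibits a proper $\H$-submodule. When $a+b+c=\frac{d-1}{2}-j$ with $j$ even, the factor $\sigma+j+1=0$ makes the superdiagonal entries of both $t_0^\vee$ and $t_1^\vee$ at column $j+1$ vanish, and a direct check confirms that $\mathrm{span}\{v_{j+1},v_{j+2},\ldots,v_d\}$ is $\H$-invariant (invariance under $t_0,t_1$ is automatic because escape at the boundary column $j+1$ would require a superdiagonal coefficient of $t_0$ or $t_1$ at an odd column, but those are zero). The case for $a+b-c$ is analogous via $\tau+j+1=0$. For $-a+b+c$ and $a-b+c$, the failure does not produce a visible zero in the generator matrices; instead the proper submodule must be constructed using a modified basis adapted to eigenvectors of the composite operators $A$ or $C$, where the corresponding off-diagonal entry vanishes.

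The principal obstacle is pinpointing the origin of the conditions on $-a+b+c$ and $a-b+c$: they do not appear among the direct matrix coefficients of the four generators and can only be extracted from the eigenvalue structure of composite Racah-algebra operators acting through $\zeta$. Making this precise---both for the extraction of a basis vector in the sufficiency argument and for the construction of an explicit invariant subspace in the necessity argument---is the technical heart of the proof.
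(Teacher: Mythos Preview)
The paper does not prove Proposition~\ref{prop:irr_E}; it is quoted from \cite{Huang:BImodule} without argument. So there is no proof here to compare your attempt against, and your proposal must be judged on its own.

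Your outline correctly splits the four conditions into two groups. The identification of the conditions on $a+b+c$ and $a+b-c$ with the nonvanishing of $(\sigma+i)(\tau+i)$ for odd $i$ is right, and your necessity argument for those two cases is valid: when $\sigma+j+1=0$ (or $\tau+j+1=0$) with $j$ even, the span of $v_{j+1},\dots,v_d$ is indeed $\H$-invariant, exactly as you describe.

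The genuine gap is the treatment of $-a+b+c$ and $a-b+c$, which you yourself flag as ``the principal obstacle'' but do not resolve. Your proposed mechanism --- that these two conditions supply \emph{eigenvalue distinctness} for the composite operators $A,B,C$ --- does not hold up. From Lemma~\ref{lem:AB_Ed}, the diagonal entries of $A$ and $B$ in the basis $\{v_i\}$ depend only on $a$ and on $b$ respectively; likewise the eigenvalues of the bidiagonal operators $t_0+t_1$ and $t_1+t_1^\vee$ involve only $a$ and only $b$. The combinations $-a+b+c$ and $a-b+c$ simply do not appear in any of these spectra, so they cannot be what forces distinct eigenvalues. Already for $d=1$ one sees what is really going on: with $(\sigma+1)(\tau+1)\neq 0$ the two bidiagonal operators each have a one-dimensional eigenspace off the coordinate axes, and the condition $(a-b+c)(-a+b+c)\neq 0$ is precisely what prevents those eigenspaces from \emph{coinciding}. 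So the role of the remaining two conditions is to rule out a common invariant line for a pair of operators, not to separate eigenvalues of a single one. Your sufficiency argument therefore lacks the step that extracts a basis vector from an arbitrary $w\in W$, and your necessity argument lacks the construction of the proper submodule when $-a+b+c$ or $a-b+c$ lies in the forbidden set. Until those two points are carried out, the proposal remains a plan rather than a proof.
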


The two-element set $\{\pm 1\}$ forms a group under multiplication and the group $\{\pm 1\}^2$ is isomorphic to the Klein $4$-group. 
Observe that there exists a unique $\{\pm 1\}^2$-action on $\H$ such that each $\e \in \{\pm 1\}^2$ acts on $\H$ as an $\F$-algebra automorphism in the following way:\\
\begin{table}[H]
\centering
\extrarowheight=3pt
\begin{tabular}{c|cccc}
$u$  &$t_0$ &$t_1$ &$t_0^\vee$ &$t_1^\vee$ 
\\

\midrule[1pt]

$u^{(1,1)}$ &$t_0$ &$t_1$ &$t_0^\vee$ &$t_1^\vee$ 
\\
$u^{(1,-1)}$ &$t_1$  &$t_0$ &$t_1^\vee$ &$t_0^\vee$
\\
$u^{(-1,1)}$ &$t_0^\vee$  &$t_1^\vee$ &$t_0$ &$t_1$
\\
$u^{(-1,-1)}$ &$t_1^\vee$  &$t_0^\vee$ &$t_1$ &$t_0$
\end{tabular}
\caption{The $\{\pm 1\}^2$-action on $\H$}\label{pm1-action}
\end{table}

\noindent 
Let $V$ denote an $\H$-module. 
For any $\F$-algebra automorphism $\e$ of $\H$ we define 
$$
V^\e
$$ 
to be the $\H$-module obtained by twisting $V$ via $\e$.

\begin{thm}
[\!\!\!\cite{Huang:BImodule}]
\label{thm:BImodule_even dim}
Let $d\geq 1$ denote an odd integer. If $V$ is a $(d+1)$-dimensional irreducible $\H$-module then there exist $a,b,c\in \F$ and $\e \in \{\pm 1\}^2$ such that $E_d(a,b,c)^\e$ is isomorphic to $V$.
\end{thm}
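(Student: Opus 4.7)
The plan is to deduce the classification from the corresponding classification of finite-dimensional irreducible $\BI$-modules established in \cite{Huang:BImodule}, transported via the algebra isomorphism $\H\cong\BI$ recalled in \S\ref{s:zeta}. As a first step, since $V$ is a finite-dimensional irreducible module over the $\F$-algebra $\H$ and $\F$ is algebraically closed, Schur's lemma implies that each of the central elements $t_0^2, t_1^2, t_0^{\vee 2}, t_1^{\vee 2}$ acts on $V$ as scalar multiplication. Taking square roots in $\F$, I write these four scalars as $s_0^2, s_1^2, s_0^{\vee 2}, s_1^{\vee 2}$.

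The central claim is that, after replacing $V$ by $V^\e$ for some $\e\in\{\pm 1\}^2$, one may assume $s_0^2 = (d+1)^2/4$. Since the $\{\pm 1\}^2$-action of Table \ref{pm1-action} permutes the generators $t_0, t_1, t_0^\vee, t_1^\vee$, it permutes the four squared scalars accordingly; granted that \emph{some} $s_i^2$ equals $(d+1)^2/4$, a suitable $\e$ moves it into the $t_0^2$-slot. Setting $a = s_1$, $b = s_0^\vee$, $c = s_1^\vee$ after the twist, Lemma \ref{lem:t^2_Ed} then matches the squared eigenvalues with those on $E_d(a,b,c)$.

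With this normalization in place, I would construct an isomorphism $V^\e \cong E_d(a,b,c)$ explicitly. Start with a nonzero eigenvector $v_0 \in V^\e$ for $t_0^\vee$ with eigenvalue $b$, which exists since $t_0^{\vee 2}$ acts as $b^2$, and define $v_1, \ldots, v_d$ recursively via (\ref{t0vee:Ed-1})--(\ref{t1vee:Ed}). Using the relation $t_0 + t_1 + t_0^\vee + t_1^\vee = -1$ together with $t_0^2 = (d+1)^2/4$, I would check that the actions of $t_0$ and $t_1$ on $\{v_i\}$ assume the prescribed forms (\ref{t0:Ed-1})--(\ref{t1:Ed-2}). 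Irreducibility of $V$ ensures the $\H$-submodule generated by $v_0$ is all of $V$, while the normalization forces termination of the recursion at $v_d$.

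The main obstacle is the normalization claim: that some $s_i^2$ necessarily equals $(d+1)^2/4$ when $\dim V = d+1$ with $d$ odd. This is not automatic from Schur's lemma; it reflects a genuine quantization arising from the interplay between the linear relation $t_0+t_1+t_0^\vee+t_1^\vee = -1$ and the finite-dimensionality of $V$. I would establish it by a ladder/termination argument starting from an arbitrary $t_0^\vee$-eigenvector and tracking when the resulting chain of $v_i$'s must close up, or more economically by citing the parallel statement already proved in \cite{Huang:BImodule} for $\BI$-modules and transporting it along $\H\cong\BI$.
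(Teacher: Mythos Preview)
The paper does not prove this theorem; it is stated with a citation to \cite{Huang:BImodule} and no argument is given. So there is no proof in the paper to compare your attempt against---the paper simply imports the classification as a black box.

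Your sketch is a reasonable outline of how such a classification is typically obtained, and your observation that the $\{\pm 1\}^2$-action of Table~\ref{pm1-action} acts transitively on $\{t_0,t_1,t_0^\vee,t_1^\vee\}$ is correct and is indeed the mechanism that lets one normalize which square takes the value $(d+1)^2/4$. However, as you yourself identify, the entire content of the theorem lies in the quantization step: that on a $(d+1)$-dimensional irreducible $\H$-module with $d$ odd, one of $t_0^2,t_1^2,t_0^{\vee 2},t_1^{\vee 2}$ \emph{must} act as $(d+1)^2/4$. Your proposal does not prove this; you gesture at a ladder/termination argument but do not carry it out, and the details (choosing a suitable starting eigenvector, showing the chain has exactly $d+1$ terms, verifying linear independence) are where the work is. Your construction of the $v_i$ is also too loose: the formulas (\ref{t0vee:Ed-1})--(\ref{t1vee:Ed}) describe the action on a basis already known to exist in $E_d(a,b,c)$, not a recursion one can blindly run in an abstract $V$; one has to specify which operators to apply to $v_0$ and then check the remaining relations hold.

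Your fallback---transporting the $\BI$-module classification from \cite{Huang:BImodule} along the isomorphism $\H\cong\BI$---is exactly what the paper does by citing that reference. If you take that route, the rest of your sketch is unnecessary.
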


\subsection{Odd-dimensional irreducible $\H$-modules}\label{s:BImodule_odd}

\begin{prop}
[\!\!\cite{Huang:BImodule}]
\label{prop:Od}
For any even integer $d\geq 0$, there exists a $(d+1)$-dimensional $\H$-module $O_d(a,b,c)$ that has an $\mathbb F$-basis $\{v_i\}_{i=0}^d$ such that 
\allowdisplaybreaks
\begin{align*}
t_0 v_i
&=\left\{
\begin{array}{ll}
\displaystyle 
-i(\sigma+i)v_{i-1}+\frac{\sigma+2i}{2} v_i
\quad 
&\hbox{for $i=2,4,\ldots,d$},
\\
\displaystyle 
-\frac{\sigma+2i+2}{2} v_i+v_{i+1}
\quad 
&\hbox{for $i=1,3,\ldots,d-1$},
\end{array}
\right.
\\
t_0 v_0&=
\frac{\sigma}{2} v_0,
\\
t_1 v_i
&=\left\{
\begin{array}{ll}
\displaystyle 
i(\sigma+i) v_{i-1}+\frac{\lambda}{2} v_i+v_{i+1}
\quad 
&\hbox{for $i=2,4,\ldots,d-2$},
\\
\displaystyle 
-\frac{\lambda}{2}v_i
\quad 
&\hbox{for $i=1,3,\ldots,d-1$},
\end{array}
\right.
\\
t_1 v_0 &=
\frac{\lambda}{2} v_0+v_1,
\qquad 
t_1 v_d=
d(\sigma+d) v_{d-1}+\frac{\lambda}{2} v_d,
\\
t_0^\vee v_i
&=
\left\{
\begin{array}{ll}
\displaystyle 
\frac{\nu}{2} v_i
\quad 
&\hbox{for $i=0,2,\ldots,d$},
\\
\displaystyle 
(d-i+1)(\tau+i) v_{i-1}-\frac{\nu}{2} v_i-v_{i+1}
\quad 
&\hbox{for $i=1,3,\ldots,d-1$},
\end{array}
\right.
\\
t_1^\vee v_i 
&=
\left\{
\begin{array}{ll} 
\displaystyle 
\frac{2d+\mu-2i}{2} v_i
-v_{i+1}
\quad 
&\hbox{for $i=0,2,\ldots,d-2$},
\\
\displaystyle 
(i-d-1)(\tau+i) v_{i-1}
-\frac{2d+\mu-2i+2}{2} v_i
\quad 
&\hbox{for $i=1,3,\ldots,d-1$},
\end{array}
\right.
\\
t_1^\vee v_d 
&=
\frac{\mu}{2} v_d,
\end{align*}
where 
\begin{alignat*}{2}
\sigma
&=
a+b+c-\frac{d+1}{2},
\qquad 
\tau
&&=
a+b-c-\frac{d+1}{2},
\\
\lambda
&=
a-b-c-\frac{d+1}{2},
\qquad 
\mu
&&=
c-a-b-\frac{d+1}{2},
\\
\nu
&=
b-a-c-\frac{d+1}{2}.
\end{alignat*}
\end{prop}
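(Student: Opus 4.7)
The plan is to define the vector space $O_d(a,b,c)$ with basis $\{v_i\}_{i=0}^d$ and linear endomorphisms $t_0,t_1,t_0^\vee,t_1^\vee$ on it by the formulas given in the statement, then verify that these four operators satisfy the defining relations of $\H$ from Definition \ref{defn:H}. That amounts to checking two things: the linear relation $t_0+t_1+t_0^\vee+t_1^\vee = -1$, and the centrality of each of $t_0^2, t_1^2, t_0^{\vee 2}, t_1^{\vee 2}$. For the latter I will replace centrality by the stronger claim that these four squares act as scalars on $O_d(a,b,c)$, which trivially suffices to produce an $\H$-module structure.

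For the linear relation I would compute $(t_0+t_1+t_0^\vee+t_1^\vee) v_i$ on each basis vector, organized by cases according to whether $i=0$, $i=d$, $i$ is even with $0<i<d$, or $i$ is odd with $0<i<d$. In each case the off-diagonal coefficients (those in front of $v_{i-1}$ and $v_{i+1}$) cancel pairwise between the four operators, while the diagonal coefficients reduce to $-1$ via the identity
\[
\sigma+\lambda+\mu+\nu = -2(d+1),
\]
which is immediate from the definitions of $\sigma,\lambda,\mu,\nu$.

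For the square relations I would show by direct calculation on consecutive pairs of basis vectors that $t_0^2, t_1^2, t_0^{\vee 2}, t_1^{\vee 2}$ act as scalar multiplication on $O_d(a,b,c)$ by $\tfrac{\sigma^2}{4}, \tfrac{\lambda^2}{4}, \tfrac{\nu^2}{4}, \tfrac{\mu^2}{4}$ respectively. As an illustrative step, evaluating $t_1^{\vee 2}$ on $v_0$ (for $d\geq 2$) produces $\bigl(\tfrac{(2d+\mu)^2}{4}+d(\tau+1)\bigr) v_0$, which collapses to $\tfrac{\mu^2}{4}v_0$ after applying the relation $\mu+\tau = -(d+1)$ read off from the definitions. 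The main obstacle throughout is the proliferation of cases driven by the parity of $i$ and the boundary behavior at $i=0$ and $i=d$; each individual simplification is routine, but the bookkeeping must be careful to invoke the special boundary formulas in place of the general ones so that the nonexistent basis vectors $v_{-1}$ or $v_{d+1}$ are never created, and the degenerate case $d=0$ (where $v_0=v_d$) must be handled separately.
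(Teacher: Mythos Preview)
Your approach is correct and is the canonical one: the paper itself does not prove this proposition but imports it from \cite{Huang:BImodule}, so there is no in-paper argument to compare against. Your plan of verifying the relation $t_0+t_1+t_0^\vee+t_1^\vee=-1$ and showing that each square acts as a scalar (namely $\sigma^2/4,\lambda^2/4,\nu^2/4,\mu^2/4$, consistent with Lemma~\ref{lem:t^2_Od}) is exactly what is needed, and your sample computation for $t_1^{\vee 2}v_0$ checks out.
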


\begin{lem}\label{lem:t^2_Od}
For any even integer $d\geq 0$, the elements $t_0^2,t_1^2,t_0^{\vee 2}, t_1^{\vee 2}$ act on $O_d(a,b,c)$ as scalar multiplication by 
\begin{gather*}
\displaystyle \left(
\frac{a+b+c}{2}-\frac{d+1}{4}
\right)^2,
\qquad 
\left(
\frac{a-b-c}{2}-\frac{d+1}{4}
\right)^2,
\\
\left(
\frac{c-a-b}{2}-\frac{d+1}{4}
\right)^2,
\qquad 
\left(
\frac{b-a-c}{2}-\frac{d+1}{4}
\right)^2,
\end{gather*}
respectively.
\end{lem}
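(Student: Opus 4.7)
The plan is to mirror the proof of Lemma \ref{lem:t^2_Ed}: apply the explicit matrix formulas of Proposition \ref{prop:Od} to each basis vector $v_i$ of $O_d(a,b,c)$, compute the actions of $t_0^2, t_1^2, t_0^{\vee 2}, t_1^{\vee 2}$, and check directly that the resulting vectors are the claimed scalar multiples of $v_i$. I would first observe that the four claimed scalars are, respectively, $(\sigma/2)^2$, $(\lambda/2)^2$, $(\nu/2)^2$, $(\mu/2)^2$ in the notation of Proposition \ref{prop:Od}, so the target expressions are simply the squares of the diagonal entries that already appear in the formulas for the generators.

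For each generator $X \in \{t_0, t_1, t_0^\vee, t_1^\vee\}$, Proposition \ref{prop:Od} shows that $X$ is diagonal on one parity class of indices and shifts by one within the opposite parity class. Consequently $X^2 v_i$ is a priori a linear combination of $v_{i-1}, v_i, v_{i+1}$ (or a subset at the boundary). I would split into the four generators and then into the two parities of $i$, plus the endpoints $i = 0$ and $i = d$, and show in each subcase that the $v_{i-1}$- and $v_{i+1}$-coefficients cancel while the $v_i$-coefficient simplifies to the claimed perfect square. For instance, for odd $i$ with $1 \le i \le d-1$ one has $t_0 v_i = -\tfrac{\sigma+2i+2}{2} v_i + v_{i+1}$, and since $i+1$ is even, $t_0 v_{i+1} = -(i+1)(\sigma+i+1)v_i + \tfrac{\sigma+2i+2}{2} v_{i+1}$; composing, the $v_{i+1}$-terms cancel and the $v_i$-coefficient equals
\[
\frac{(\sigma+2i+2)^2}{4} - (i+1)(\sigma+i+1) = \left(\frac{\sigma+2i+2}{2} - (i+1)\right)^2 = \left(\frac{\sigma}{2}\right)^2,
\]
as required. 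The analogous completion-of-square identity handles every other interior case, with $\sigma$ replaced by $\lambda$, $\nu$, or $\mu$; at the endpoints the single-term action of $X$ produces the target scalar already on the nose, so these cases are actually easier than the interior ones. The diagonal parity class for $t_0^\vee$ and $t_1^\vee$ is even simpler, since the very first application of $X$ yields the scalar directly.

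The main obstacle is purely bookkeeping: there are four generators, two parity classes and two boundary vertices, so roughly sixteen small computations must be organized and the signs and indices kept straight. There is no conceptual difficulty: in every subcase the cancellation of off-diagonal terms is forced by the bidiagonal structure, and the diagonal coefficient simplifies via the elementary identity $\tfrac14(u+v)^2 - uv = \tfrac14(u-v)^2$ applied with suitable $u, v$ read off from the formulas in Proposition \ref{prop:Od}. The proof therefore reduces to a uniform finite list of routine verifications.
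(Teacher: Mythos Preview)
Your approach is exactly the paper's own one-line proof: apply Proposition~\ref{prop:Od} and verify each $X^2$ directly on the basis. One small inaccuracy in your structural description---the claim that ``$X$ is diagonal on one parity class'' holds for $t_1$ (diagonal on odd indices) and $t_0^\vee$ (diagonal on even indices) but fails for $t_0$ and $t_1^\vee$, each of which has an off-diagonal term on \emph{both} parities---yet this does not affect the case-by-case verification, which goes through just as in your worked example via the same completion-of-square identity.
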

\begin{proof}
Apply Proposition \ref{prop:Od} to evaluate the actions of $t_0^2,t_1^2,t_0^{\vee 2}, t_1^{\vee 2}$ on $O_d(a,b,c)$.
\end{proof}

\begin{prop}
[\!\!\!\cite{Huang:BImodule}]
\label{prop:irr_O}
For any even integer $d\geq 0$, the $\H$-module 
$
O_d(a,b,c)
$ is irreducible if and only if 
$$
a+b+c, a-b-c, -a+b-c, -a-b+c\not\in 
\left\{\frac{d+1}{2}-i\,\bigg|\,i=2,4,\ldots,d\right\}.
$$
\end{prop}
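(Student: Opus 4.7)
The plan is to prove the biconditional by showing that each forbidden condition produces an explicit invariant subspace, and conversely that if none of the conditions holds then any nonzero $\H$-submodule is all of $O_d(a,b,c)$. The cleanest reducibility case is when $a+b+c = \frac{d+1}{2}-k$ for some even $k$ with $2 \le k \le d$, i.e.\ $\sigma+k=0$ with $\sigma$ as in Proposition \ref{prop:Od}. I would claim $W_k := \mathrm{span}\{v_k, v_{k+1}, \ldots, v_d\}$ is $\H$-invariant: the only terms in the formulas of Proposition \ref{prop:Od} that can produce a $v_j$ with $j<k$ starting from some $v_i$ with $i \ge k$ are the $v_{k-1}$-contributions in $t_0 v_k$ and $t_1 v_k$, and both carry a factor of $k(\sigma+k)$, which vanishes by hypothesis. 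All other actions on $v_i$ for $i\ge k$ produce only $v_{i-1},v_i,v_{i+1}$ with indices $\ge k$, so $W_k$ is preserved.

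The three remaining reducibility conditions, involving $a-b-c$, $-a+b-c$, and $-a-b+c$, I would handle by exploiting the $\{\pm 1\}^2$-action on $\H$ from Table \ref{pm1-action}. By Lemma \ref{lem:t^2_Od} the scalars by which $t_0^2, t_1^2, t_0^{\vee 2}, t_1^{\vee 2}$ act on $O_d(a,b,c)$ are precisely the squares of the four quantities in the forbidden set (up to the common shift $\frac{d+1}{4}$), and each automorphism $u^\e$ permutes the four generators and hence these four scalars. I would establish an $\H$-module isomorphism $O_d(a,b,c)^\e \cong O_d(a',b',c')$ for an appropriate sign-twist $(a',b',c')$ of $(a,b,c)$, either by constructing an explicit basis of $O_d(a,b,c)^\e$ realizing the action of Proposition \ref{prop:Od} with new parameters, or by invoking the odd-dimensional analogue of Theorem \ref{thm:BImodule_even dim}. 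Each of the three remaining conditions on $(a,b,c)$ then corresponds to the $\sigma$-condition on some $(a',b',c')$, and since twisting preserves irreducibility, the first case applies.

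For the irreducibility direction, assume all four conditions hold, let $U$ be a nonzero $\H$-submodule of $O_d(a,b,c)$, and pick $u \in U \setminus \{0\}$ of minimal support with respect to $\{v_i\}$. Using Proposition \ref{prop:Od}, apply suitable polynomial combinations of $t_0, t_1, t_0^\vee, t_1^\vee$ to $u$; the hypotheses $\sigma+k, \lambda+k, \mu+k, \nu+k \neq 0$ for all even $k \in \{2,\ldots,d\}$ are exactly what is needed so that the transition coefficients appearing in the recursion do not vanish. This forces $u$ to be a scalar multiple of a single basis vector $v_i$, after which a further round of generator applications recovers the full basis and yields $U=O_d(a,b,c)$. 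The main obstacle is the identification $O_d(a,b,c)^\e \cong O_d(a',b',c')$ invoked in the second paragraph: without appealing to a classification theorem, the explicit change of basis here is the genuinely delicate step, whereas the invariance check in the $\sigma$-case and the support-minimization argument for irreducibility are routine given the explicit formulas.
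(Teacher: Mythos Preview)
The paper does not prove this proposition: it is quoted from \cite{Huang:BImodule} with no argument given, so there is no proof in the present paper to compare against.

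On the substance of your outline: the invariance of $W_k=\mathrm{span}\{v_k,\ldots,v_d\}$ when $\sigma+k=0$ for even $k$ is correct and the verification goes through exactly as you describe. For the three remaining families, however, be careful with the alternative you propose of ``invoking the odd-dimensional analogue of Theorem~\ref{thm:BImodule_even dim}'', i.e.\ Theorem~\ref{thm:BImodule_odd dim}. That classification applies only to irreducible modules, and in \cite{Huang:BImodule} its proof may well rest on the very proposition you are trying to establish, so this route risks circularity. Even in a proof by contradiction (assume $O_d(a,b,c)$ irreducible, twist, classify the result as $O_d(a',b',c')$), Lemma~\ref{lem:t^2_Od} only pins down the \emph{squares} of the new parameters, so a priori you learn $\sigma'=\pm\lambda$; since the forbidden set $\{-2,-4,\ldots,-d\}$ is not symmetric under negation, the wrong sign would not yield a contradiction. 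The honest route is therefore the one you flag as delicate---an explicit change of basis exhibiting $O_d(a,b,c)^\e\cong O_d(a',b',c')$ for a concrete $(a',b',c')$---or, alternatively, locating direct invariant subspaces for the $\lambda$-, $\mu$-, $\nu$-conditions, which will not be tail spans in the $\{v_i\}$ basis. Your support-minimization sketch for the irreducibility direction is reasonable in spirit.
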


\begin{thm}
[\!\!\!\cite{Huang:BImodule}]
\label{thm:BImodule_odd dim}
Let $d\geq 0$ denote an even integer. If $V$ is a $(d+1)$-dimensional irreducible $\H$-module then there exist unique $a,b,c\in \F$ such that $O_d(a,b,c)$ is isomorphic to $V$.
\end{thm}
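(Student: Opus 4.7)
My plan is to establish uniqueness first and then existence, working with the explicit module $O_d(a,b,c)$ of Proposition \ref{prop:Od}.

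For uniqueness, suppose $V \cong O_d(a,b,c)$. Since $t_0^2, t_1^2, t_0^{\vee 2}, t_1^{\vee 2}$ are central they act on $V$ as scalars, and Lemma \ref{lem:t^2_Od} identifies these scalars with $(\sigma/2)^2, (\lambda/2)^2, (\nu/2)^2, (\mu/2)^2$ in the notation of Proposition \ref{prop:Od}. Thus $V$ determines the four numbers $\sigma,\lambda,\nu,\mu$ only up to sign. To fix the signs, I would compute the trace of each generator on $O_d(a,b,c)$ directly from Proposition \ref{prop:Od}. For $\mathrm{tr}(t_0)$, the sub- and superdiagonal shifts contribute nothing, and the diagonal contributions at $v_{2k-1}$ and $v_{2k}$ telescope pairwise to zero for $k=1,\ldots,d/2$, leaving $\mathrm{tr}(t_0) = \sigma/2$. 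Analogous cancellations yield $\mathrm{tr}(t_1)=\lambda/2$, $\mathrm{tr}(t_0^\vee)=\nu/2$, $\mathrm{tr}(t_1^\vee)=\mu/2$. Since trace is an isomorphism invariant of $V$, the four quantities $\sigma,\lambda,\nu,\mu$ are determined; the three parameters $a,b,c$ are then recovered uniquely as explicit linear combinations of these traces.

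For existence, given an irreducible $(d+1)$-dimensional $\H$-module $V$, define $a,b,c$ from the four traces as above. Since $t_0^2$ equals the scalar $(\sigma/2)^2$ on $V$, the operator $t_0 - \sigma/2$ has a nonzero kernel; choose any $v_0 \neq 0$ in it. Define $v_1,\ldots,v_d$ iteratively by imitating the off-diagonal entries of Proposition \ref{prop:Od}, for instance $v_1 := (t_1 - \lambda/2) v_0$ and $v_2 := (t_0 + (\sigma+4)/2) v_1$, alternating applications of $t_1$ and $t_0$ with known correction terms. Then verify by induction that the actions of $t_0, t_1, t_0^\vee, t_1^\vee$ on $\{v_i\}$ agree with Proposition \ref{prop:Od}, using the relation $t_0+t_1+t_0^\vee+t_1^\vee = -1$ together with the known scalar values of $t_0^{\vee 2}$ and $t_1^{\vee 2}$ to pin down the dual actions. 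The span of $\{v_i\}_{i=0}^d$ is then an $\H$-submodule of $V$; by irreducibility it equals $V$, producing the desired isomorphism $V \cong O_d(a,b,c)$.

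The hardest step will be ensuring that $v_i \neq 0$ for $0 \le i \le d$, equivalently that the recursion does not collapse. Suppose $v_j = 0$ for some $j < d$. By the inductive verification up to index $j-1$, the subspace $W = \operatorname{span}\{v_0,\ldots,v_{j-1}\}$ is closed under all four generators, since every formula of Proposition \ref{prop:Od} referencing $v_j$ reduces to one lying in $W$ once $v_j$ vanishes. Hence $W$ is a proper nonzero $\H$-submodule of $V$, contradicting irreducibility. Equivalently, the triple $(a,b,c)$ recovered from $V$ automatically satisfies the irreducibility criterion of Proposition \ref{prop:irr_O}. A minor technical concern is the degenerate case $\sigma = 0$, where $t_0$ is merely nilpotent rather than diagonalizable; finite-dimensionality still guarantees a nonzero $v_0 \in \ker t_0$, so the recursion can still begin.
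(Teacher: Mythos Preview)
The paper does not prove this theorem; it is quoted from \cite{Huang:BImodule} without argument, so there is no in-paper proof to compare against. Your uniqueness argument is sound: the trace computations on $O_d(a,b,c)$ are correct, and since $(\sigma,\lambda,\nu)$ depends invertibly on $(a,b,c)$ the traces of $t_0,t_1,t_0^\vee$ on $V$ determine the parameters.

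The existence argument, however, has a real gap. You set $\sigma := 2\,\mathrm{tr}(t_0)$ and then assert that $t_0^2$ acts on $V$ as the scalar $(\sigma/2)^2$. A priori $t_0^2$ acts as some scalar $k_0$, and the eigenvalues of $t_0$ are $\pm\sqrt{k_0}$ with multiplicities $m_+,m_-$; your claim $k_0=(\mathrm{tr}\,t_0)^2$ amounts to $(m_+-m_-)^2=1$, i.e.\ that the two eigenspaces of $t_0$ differ in dimension by exactly one. That is a nontrivial structural fact about odd-dimensional irreducible $\H$-modules which is essentially part of the classification you are trying to prove, not something you may assume. A second, related problem: in $O_d(a,b,c)$ the vector $v_0$ is a \emph{simultaneous} eigenvector for $t_0$ and $t_0^\vee$ (Proposition~\ref{prop:Od}), but $t_0$ and $t_0^\vee$ do not commute, so an arbitrary $v_0\in\ker(t_0-\sigma/2)$ gives you no control over $t_0^\vee v_0$. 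The relation $t_0+t_1+t_0^\vee+t_1^\vee=-1$ only pins down the sum $t_0^\vee+t_1^\vee$, not the individual actions, so the inductive verification you describe cannot get started without first locating a vector with the correct joint behaviour under $t_0$ and $t_0^\vee$. Your non-vanishing argument for the $v_i$ is likewise circular, since it presumes the formulas of Proposition~\ref{prop:Od} already hold on the span built so far. These are exactly the delicate points that the full proof in \cite{Huang:BImodule} has to address.
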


\section{The classification of lattices of $\Re$-submodules of finite-dimensional  irreducible $\H$-modules}\label{s:lattice}

In \S\ref{s:t0} we investigate the role of $t_0$ in the $\Re$-submodules of an $\H$-module. 
According to Theorems \ref{thm:BImodule_even dim} and \ref{thm:BImodule_odd dim} we may divide the lattices of $\Re$-submodules of the finite-dimensional irreducible $\H$-modules into five cases.
In \S\ref{s:lattice_Ed}--\ref{s:lattice_Od} we individually classify those lattices.

\subsection{The eigenspaces of $t_0$ as $\Re$-modules}\label{s:t0}

\begin{lem}\label{lem:t0}
The following equations hold in $\H$:
\begin{align*}
\{t_0+t_1,[t_1,t_0]\}&=0,
\\ 
\{t_0+t_0^\vee,[t_0^\vee,t_0]\}&=0,
\\
\{t_0+t_1^\vee,[t_1^\vee,t_0]\}&=0.
\end{align*}
\end{lem}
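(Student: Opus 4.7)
The three identities all follow from the single general fact that if $X$ is an element of an associative algebra with $X^2$ central, then $\{X,[X,Y]\}=0$ for every $Y$. Indeed, since $X^2$ commutes with $Y$,
\begin{equation*}
0 = [Y,X^2] = [Y,X]X + X[Y,X] = -\{X,[X,Y]\}.
\end{equation*}
Applying this to $X=t_0$ (with $Y=t_1,t_0^\vee,t_1^\vee$ respectively) gives
\begin{equation*}
\{t_0,[t_0,t_1]\}=\{t_0,[t_0,t_0^\vee]\}=\{t_0,[t_0,t_1^\vee]\}=0,
\end{equation*}
and applying it to $X=t_1$, $X=t_0^\vee$, $X=t_1^\vee$ (each with $Y=t_0$) gives
\begin{equation*}
\{t_1,[t_1,t_0]\}=\{t_0^\vee,[t_0^\vee,t_0]\}=\{t_1^\vee,[t_1^\vee,t_0]\}=0.
\end{equation*}

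The plan is then simply to add the appropriate pairs. For the first identity, after using antisymmetry of the bracket to replace $[t_0,t_1]$ by $-[t_1,t_0]$, the two relations $\{t_0,[t_1,t_0]\}=0$ and $\{t_1,[t_1,t_0]\}=0$ sum to $\{t_0+t_1,[t_1,t_0]\}=0$. The second and third identities follow identically, pairing the $X=t_0$ output with the $X=t_0^\vee$ (resp.\ $X=t_1^\vee$) output.

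Since the two centrality facts required in each case are built into Definition \ref{defn:H}, there is no real obstacle; the only thing to be careful about is the sign, i.e.\ the switch between $[t_0,Y]$ and $[Y,t_0]$ when combining the two halves. The proof will be three lines of commutator algebra, one per identity.
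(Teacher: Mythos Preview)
Your proof is correct and follows essentially the same approach as the paper: both expand the anticommutator and use that $t_0^2,t_1^2,t_0^{\vee 2},t_1^{\vee 2}$ are central. The only cosmetic difference is that you first isolate the general identity $\{X,[X,Y]\}=[X^2,Y]$ and then add two instances, whereas the paper expands $\{t_0+t_1,[t_1,t_0]\}$ in one step to $t_1^2t_0+t_1t_0^2-t_0^2t_1-t_0t_1^2=[t_1^2,t_0]+[t_1,t_0^2]$ directly.
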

\begin{proof}
A direct calculation yields that 
\begin{gather}\label{t0+t1[t0t1]}
\{t_0+t_1,[t_1,t_0]\}=t_1^2t_0+t_1t_0^2-t_0^2t_1-t_0t_1^2.
\end{gather}
Since $t_0^2$ and $t_1^2$ are central in $\H$ by Definition \ref{defn:H}, the right-hand side of (\ref{t0+t1[t0t1]}) is zero. By similar arguments the other two equations follow.
\end{proof}

By \cite[Theorem 6.4]{Huang:R<BI}  the $\F$-algebra homomorphism $\zeta$ given in Theorem \ref{thm:hom} is injective. Thus the algebra $\Re$ can be  considered as a subalgebra of $\H$.

\begin{lem}\label{lem:t0_centralizer}
The element $t_0$ is in the centralizer of $\Re$ in $\H$. 
\end{lem}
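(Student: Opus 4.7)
The plan is to use Lemma 2.2(i), which says that $\Re$ is generated by $A,B,C$, together with Lemma 4.1, to reduce the claim to showing that $t_0$ commutes with each of $\zeta(A),\zeta(B),\zeta(C)$.

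The first step is to simplify the images of $A,B,C$ under $\zeta$ using the relation $t_0+t_1+t_0^\vee+t_1^\vee=-1$ from Definition 2.3. This relation gives
$$
t_1^\vee+t_0^\vee=-1-(t_0+t_1),\qquad t_1+t_1^\vee=-1-(t_0+t_0^\vee),\qquad t_0^\vee+t_1=-1-(t_0+t_1^\vee),
$$
and substituting into Theorem 2.4, a straightforward expansion of $(-1-u)(1-u)=u^2-1$ yields the uniform expression
$$
\zeta(A)=\frac{(t_0+t_1)^2-1}{4},\qquad \zeta(B)=\frac{(t_0+t_0^\vee)^2-1}{4},\qquad \zeta(C)=\frac{(t_0+t_1^\vee)^2-1}{4}.
$$

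The second step is to observe that for any $x\in\H$, the derivation identity gives
$$
[t_0,(t_0+x)^2]=[t_0,t_0+x]\,(t_0+x)+(t_0+x)\,[t_0,t_0+x]=\{[t_0,x],\,t_0+x\}=-\{t_0+x,\,[x,t_0]\}.
$$
Applying this to $x\in\{t_1,t_0^\vee,t_1^\vee\}$, Lemma 4.1 immediately implies $[t_0,(t_0+x)^2]=0$ in each of the three cases. Hence $t_0$ commutes with $\zeta(A),\zeta(B),\zeta(C)$, and therefore with every element of $\Re$ (viewed inside $\H$ via the injection $\zeta$) by Lemma 2.2(i).

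There is essentially no obstacle once the rewriting via the linear relation $t_0+t_1+t_0^\vee+t_1^\vee=-1$ is noticed; the calculation reduces to three applications of Lemma 4.1. The only point requiring slight care is the sign check that $(-1-u)(1-u)=u^2-1$, so that $\zeta(A),\zeta(B),\zeta(C)$ really do take the symmetric form $\frac14((t_0+x)^2-1)$.
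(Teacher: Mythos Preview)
Your proof is correct and follows essentially the same route as the paper: both arguments reduce to the generators via Lemma~2.2, use the linear relation (\ref{t0+t1+t0'+t1'=-1}) to rewrite things in terms of $t_0+x$ with $x\in\{t_1,t_0^\vee,t_1^\vee\}$, apply a Leibniz-type identity for commutators, and finish with Lemma~4.1. The only cosmetic differences are that the paper checks just $A,B$ (invoking Lemma~2.2(ii) and the form of $\zeta(\delta)$) rather than $A,B,C$, and it applies the Leibniz rule directly to $(t_0^\vee+t_1^\vee)(t_0^\vee+t_1^\vee+2)$ before simplifying, whereas you first rewrite $\zeta(A)$ as $\tfrac14((t_0+t_1)^2-1)$; the substance is identical.
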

\begin{proof}
By Lemma \ref{lem:delta} it suffices to show that $t_0$ commutes with $A$ and $B$. Any elements $x,y,z$ in a ring satisfy
\begin{gather}\label{[xy,z]}
[xy,z]=x[y,z]+[x,z]y.
\end{gather}
Applying (\ref{[xy,z]}) with $(x,y,z)=(t_0^\vee+t_1^\vee,t_0^\vee+t_1^\vee+2,t_0)$, 
the right-hand side of the resulting equation is 
\begin{gather}\label{[A,t0]_1}
(t_0^\vee+t_1^\vee)[t_0^\vee+t_1^\vee+2,t_0]+[t_0^\vee+t_1^\vee,t_0](t_0^\vee+t_1^\vee+2)
\end{gather}
and the left-hand side is $4[A,t_0]$ by Theorem \ref{thm:hom}. Using (\ref{t0+t1+t0'+t1'=-1}) yields that (\ref{[A,t0]_1}) is equal to $\{t_0+t_1,[t_1,t_0]\}$. 
Combined with Lemma \ref{lem:t0} we have $[A,t_0]=0$. Similarly the commutator $[B,t_0]$ is zero. The lemma follows.
\end{proof}

Given any $\H$-module $V$ and any $\theta\in \F$ we let 
$$
V(\theta)=\{v\in V\,|\, t_0v=\theta v\}.
$$

\begin{prop}\label{prop:t0eigenspace=Rmodule}
If $V$ is an $\H$-module then $V(\theta)$ is an $\Re$-submodule of $V$ for any $\theta\in \F$.
\end{prop}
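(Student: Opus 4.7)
The plan is to reduce the statement directly to Lemma \ref{lem:t0_centralizer}, which says that $t_0$ lies in the centralizer of $\Re$ inside $\H$. Since the $\Re$-module structure on $V$ is obtained by pulling back the $\H$-module structure along the inclusion $\Re \hookrightarrow \H$ (injectivity of $\zeta$ having been invoked just before Lemma \ref{lem:t0_centralizer}), to prove that $V(\theta)$ is $\Re$-invariant it suffices to show $rv \in V(\theta)$ for every $r \in \Re$ and every $v \in V(\theta)$.

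First I would fix such $r$ and $v$. By Lemma \ref{lem:t0_centralizer}, $t_0 r = r t_0$ in $\H$, so applying both sides to $v$ gives
\begin{equation*}
t_0(rv) = r(t_0 v) = r(\theta v) = \theta(rv),
\end{equation*}
which exhibits $rv$ as an element of $V(\theta)$. Together with the fact that $V(\theta)$ is clearly an $\F$-subspace (as the kernel of the $\F$-linear map $t_0 - \theta \cdot \mathrm{id}$ on $V$), this yields the claim.

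There is essentially no obstacle: all the real work has already been done in Lemma \ref{lem:t0_centralizer}, where the commutation of $t_0$ with the generators $A$ and $B$ of $\Re$ was established (and commutation with $\delta$ is automatic from Lemma \ref{lem:delta}(iii)). So the proof here is a one-line application of that centralizer property.
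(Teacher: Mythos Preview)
Your proof is correct and is exactly the paper's argument: invoke Lemma~\ref{lem:t0_centralizer} and observe that an eigenspace of a centralizing element is automatically a submodule. One small quibble with your closing parenthetical: Lemma~\ref{lem:delta}(iii) says $\delta$ is central in $\Re$, which does \emph{not} by itself yield $[t_0,\delta]=0$ in $\H$ (since $t_0\notin\Re$); rather, that commutation comes from the explicit image of $\delta$ under $\zeta$ in Theorem~\ref{thm:hom}, which is a polynomial in $t_0$ and the central elements $t_0^2,t_1^2,t_0^{\vee 2},t_1^{\vee 2}$.
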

\begin{proof}
For any $\theta\in \F$ it follows from Lemma \ref{lem:t0_centralizer} that 
 $V(\theta)$ is $x$-invariant for all $x\in \Re$.
\end{proof}

\begin{prop}\label{prop:irr_Rmodule_in_t0eigenspace}
Let $V$ denote a finite-dimensional irreducible $\H$-module. For any irreducible $\Re$-submodule $W$ of $V$, there exists a scalar $\theta\in \F$ such that $W\subseteq V(\theta)$.
\end{prop}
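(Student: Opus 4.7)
The plan is to exploit the explicit image of $\delta$ under $\zeta$ from Theorem \ref{thm:hom} in order to isolate $t_0$ as a combination of operators each of which, by Schur's lemma, must act as a scalar on $W$. Rearranging
$$\zeta(\delta)=\frac{t_0^2+t_1^2+t_0^{\vee 2}+t_1^{\vee 2}}{4}-\frac{t_0}{2}-\frac{3}{4}$$
gives the identity
$$t_0=\frac{t_0^2+t_1^2+t_0^{\vee 2}+t_1^{\vee 2}}{2}-\frac{3}{2}-2\zeta(\delta)$$
in $\H$. The right-hand side splits naturally into two parts: the four squares $t_0^2,t_1^2,t_0^{\vee 2},t_1^{\vee 2}$, which are central in $\H$ by Definition \ref{defn:H}, and the single operator $\zeta(\delta)$, whose preimage $\delta$ is central in $\Re$ by Lemma \ref{lem:delta}(iii).

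First, since $V$ is a finite-dimensional irreducible $\H$-module over the algebraically closed field $\F$, Schur's lemma forces each of $t_0^2,t_1^2,t_0^{\vee 2},t_1^{\vee 2}$ to act on $V$ as multiplication by a scalar; call their sum $s\in\F$. Second, because $\delta$ is central in $\Re$, the operator $\zeta(\delta)$ commutes with every element of $\zeta(\Re)$ and in particular with the entire $\Re$-action on $W$; as $W$ is a finite-dimensional irreducible $\Re$-module, a second application of Schur's lemma yields a scalar $d\in\F$ with $\zeta(\delta)|_W=d\cdot\mathrm{id}_W$. Substituting into the displayed identity, for every $w\in W$ I get $t_0w=(s/2-3/2-2d)w$, so $t_0$ acts on $W$ as the single scalar $\theta:=s/2-3/2-2d$. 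In particular $W$ is $t_0$-stable and $W\subseteq V(\theta)$.

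Once the formula for $\zeta(\delta)$ is on the table the argument is essentially automatic, and no case analysis on the parity of $\dim V$ or on the structure of $W$ inside $V$ is required. The only subtlety is the double invocation of Schur's lemma—first for the $\H$-module $V$ to eliminate the four central squares, then for the $\Re$-module $W$ to eliminate the central element $\delta$—both of which depend on $\F$ being algebraically closed of characteristic zero together with the finite-dimensionality of the modules. I do not foresee a genuine obstacle, only the bookkeeping needed to state the two scalars cleanly.
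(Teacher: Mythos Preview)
Your proof is correct and follows essentially the same approach as the paper: both rearrange the formula for $\zeta(\delta)$ from Theorem~\ref{thm:hom} to express $t_0$ as an $\F$-linear combination of $1,\zeta(\delta),t_0^2,t_1^2,t_0^{\vee 2},t_1^{\vee 2}$, then apply Schur's lemma once to $V$ (for the central squares in $\H$) and once to $W$ (for the central element $\delta$ in $\Re$). One cosmetic point: your choice of $d$ for the scalar $\zeta(\delta)|_W$ clashes with the paper's use of $d$ for the dimension parameter, so pick a different letter.
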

\begin{proof}
Recall from Lemma \ref{lem:delta}(iii) that $\delta$ is central in $\Re$.
Recall from Definition \ref{defn:H} that each of $t_0^2,t_1^2,t_0^{\vee 2}, t_1^{\vee 2}$ is central in $\H$. It follows from Schur's lemma that the action of $\delta$ on $W$ and the actions of $t_0^2,t_1^2,t_0^{\vee 2}, t_1^{\vee 2}$ on $V$ are scalar multiplication. By Theorem \ref{thm:hom} the element $t_0$ is an $\F$-linear combination of $1,\delta,t_0^2,t_1^2,t_0^{\vee 2}, t_1^{\vee 2}$. Hence $t_0$ acts on $W$ as scalar multiplication. The proposition follows.
\end{proof}

\subsection{The lattice of $\Re$-submodules of $E_d(a,b,c)$}\label{s:lattice_Ed}

Throughout \S\ref{s:lattice_Ed}--\S\ref{s:lattice_Ed(-1,-1)} we 
let $d\geq 1$ denote an odd integer and let $\{v_i\}_{i=0}^d$ denote the $\F$-basis for $E_d(a,b,c)$ from Proposition \ref{prop:Ed}. 
For notational convenience we set 
$$
\rho_i=c^2-\left(a+b-\frac{d+1}{2}+i\right)^2
\qquad \hbox{for $i=1,3,\ldots,d$}.
$$

\begin{lem}\label{lem2:iota_Ed}
The matrix representing $t_0$ with respect to the $\F$-basis 
\begin{gather*}
v_0,
\quad 
v_d,
\quad 
v_i-i v_{i-1}
\quad 
\hbox{for $i=2,4,\ldots,d-1$},
\quad 
v_i 
\quad 
\hbox{for $i=1,3,\ldots,d-2$}
\end{gather*}
for $E_d(a,b,c)$ is 
\begin{gather*}
\begin{pmatrix}
-\frac{d+1}{2} I_2 &\rvline &{\bf 0} &\rvline &{\bf 0}
\\
\hline
{\bf 0} &\rvline &-\frac{d+1}{2} I_{\frac{d-1}{2}} & \rvline &I_{\frac{d-1}{2}}
\\
\hline
{\bf 0} & \rvline &{\bf 0} & \rvline &\frac{d+1}{2}I_{\frac{d-1}{2}} 
\end{pmatrix}.
\end{gather*}
\end{lem}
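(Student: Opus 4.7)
The approach is a direct matrix computation using the action formulas from Proposition \ref{prop:Ed}. First I would confirm that the $d+1$ listed vectors form an $\F$-basis for $E_d(a,b,c)$: the only modification from the original basis $\{v_i\}_{i=0}^d$ is the substitution $v_i\rightsquigarrow v_i-iv_{i-1}$ for each even $i\in[2,d-1]$, a unimodular triangular change of coordinates, hence invertible.

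Next I would compute the action of $t_0$ on each of the three groups of basis vectors in turn. The vectors $v_0$ and $v_d$ are handled directly by (\ref{t0:Ed-2}), producing the top-left block $-\frac{d+1}{2}I_2$. For each even $i\in[2,d-1]$, applying the first case of (\ref{t0:Ed-1}) to $v_i$ and the second case of (\ref{t0:Ed-1}) to $v_{i-1}$ (whose index $i-1$ is odd and lies in $[1,d-2]$), and taking the combination $t_0 v_i - i\,t_0 v_{i-1}$, the $v_{i-1}$-coefficient simplifies from $i(d-i+1)-\tfrac{i(d-2i+1)}{2}$ to $\tfrac{i(d+1)}{2}$ and the $v_i$-coefficient simplifies from $-\tfrac{d-2i+1}{2}-i$ to $-\tfrac{d+1}{2}$, yielding
\begin{equation*}
t_0(v_i-iv_{i-1})=-\tfrac{d+1}{2}(v_i-iv_{i-1}).
\end{equation*}
This single identity populates the middle diagonal block $-\frac{d+1}{2}I_{(d-1)/2}$ and forces the block below it to vanish.

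Finally, for each odd $j\in[1,d-2]$, (\ref{t0:Ed-1}) yields $t_0 v_j=\tfrac{d-2j-1}{2}v_j+v_{j+1}$; rewriting $v_{j+1}=(v_{j+1}-(j+1)v_j)+(j+1)v_j$ and collecting terms gives
\begin{equation*}
t_0 v_j=\tfrac{d+1}{2}v_j+(v_{j+1}-(j+1)v_j).
\end{equation*}
Under the natural pairing $j\leftrightarrow i=j+1$ between the two $(d-1)/2$-element sub-bases, this expansion supplies both the bottom-right block $\frac{d+1}{2}I_{(d-1)/2}$ (from the $v_j$ coefficient) and the upper-right block $I_{(d-1)/2}$ (from the $v_{j+1}-(j+1)v_j$ coefficient). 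All remaining matrix entries vanish because neither $v_0$ nor $v_d$ appears in any of the above expansions and the vectors $v_i-iv_{i-1}$ are eigenvectors for $t_0$. Assembling the pieces reproduces the claimed block form. No substantive obstacle arises; the only potential pitfall is bookkeeping---listing the two sub-bases in the matching order so that the off-diagonal identity block is correctly placed.
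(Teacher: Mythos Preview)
Your proposal is correct and follows exactly the same approach as the paper: the paper's proof simply states that applying (\ref{t0:Ed-1}) and (\ref{t0:Ed-2}) makes the verification routine, and you have carried out that routine verification in full detail. The computations you give are accurate, and your remark about ordering the two sub-bases consistently is the only bookkeeping point that needs care.
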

\begin{proof}
Applying (\ref{t0:Ed-1}) and (\ref{t0:Ed-2}) it is routine to verify the lemma.
\end{proof}

\begin{lem}\label{lem:iota_Ed}
\begin{enumerate}
\item If $d=1$ then $t_0$ is diagonalizable on $E_d(a,b,c)$ with exactly one eigenvalue $-\frac{d+1}{2}$. 

\item If $d\geq 3$ then $t_0$ is diagonalizable on $E_d(a,b,c)$ with exactly two eigenvalues $\pm\frac{d+1}{2}$. 
\end{enumerate}
\end{lem}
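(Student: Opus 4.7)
The plan is to read off everything from the matrix representation already provided in Lemma \ref{lem2:iota_Ed}, so no new computation with the generator relations is needed.

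First, I would dispose of the case $d=1$. Here the two blocks of size $\frac{d-1}{2}=0$ in the matrix of Lemma \ref{lem2:iota_Ed} are empty, and the basis collapses to $v_0, v_d=v_1$. The matrix of $t_0$ is then simply $-\frac{d+1}{2}I_2$, which is already diagonal. This gives (i).

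For (ii), with $d\geq 3$, the matrix is upper block-triangular, so its spectrum is read off the diagonal blocks: the eigenvalue $-\frac{d+1}{2}$ has algebraic multiplicity $2+\frac{d-1}{2}$ and the eigenvalue $\frac{d+1}{2}$ has algebraic multiplicity $\frac{d-1}{2}$. Next I would compute the two eigenspaces directly in the block coordinates $(u,v,w)\in \F^2\oplus \F^{(d-1)/2}\oplus \F^{(d-1)/2}$, where
$$t_0(u,v,w)=\bigl(-\tfrac{d+1}{2}u,\;-\tfrac{d+1}{2}v+w,\;\tfrac{d+1}{2}w\bigr).$$
Solving $t_0(u,v,w)=\frac{d+1}{2}(u,v,w)$ forces $u=0$ and $w=(d+1)v$, giving a $\frac{d-1}{2}$-dimensional eigenspace (here the characteristic-zero hypothesis ensures $d+1\neq 0$). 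Solving $t_0(u,v,w)=-\frac{d+1}{2}(u,v,w)$ forces $w=0$ with $u,v$ free, giving a $(2+\frac{d-1}{2})$-dimensional eigenspace. The two geometric multiplicities equal the algebraic multiplicities, and their sum is $d+1=\dim E_d(a,b,c)$, so $t_0$ is diagonalizable with exactly the eigenvalues $\pm\frac{d+1}{2}$.

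There is essentially no obstacle: the only mild subtlety is that the off-diagonal block $I_{(d-1)/2}$ connecting the $-\frac{d+1}{2}$- and $+\frac{d+1}{2}$-blocks does not create nontrivial Jordan structure precisely because those two blocks have distinct eigenvalues, so the coupling can be gauged away. The $d=1$ case needs its own line only because the matrix form of Lemma \ref{lem2:iota_Ed} degenerates there.
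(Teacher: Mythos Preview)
Your proposal is correct and follows exactly the approach the paper intends: the paper's proof is just ``Immediate from Lemma \ref{lem2:iota_Ed},'' and you have simply spelled out why that matrix is diagonalizable with the stated eigenvalues. Your explicit computation of the eigenspaces (and the remark that $d+1\neq 0$ in characteristic zero is what keeps the $\pm\frac{d+1}{2}$ blocks uncoupled) is the content behind that one-line proof.
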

\begin{proof}
Immediate from Lemma \ref{lem2:iota_Ed}.
\end{proof}

It follows from Proposition \ref{prop:t0eigenspace=Rmodule} that $E_d(a,b,c)(-\frac{d+1}{2})$ is an $\Re$-submodule of $E_d(a,b,c)$. We now go into the $\Re$-modules $E_d(a,b,c)(-\frac{d+1}{2})$ and $E_d(a,b,c)/E_d(a,b,c)(-\frac{d+1}{2})$.

\begin{lem}\label{lem3:iota_Ed}
$E_d(a,b,c)(-\frac{d+1}{2})$ is of dimension $\frac{d+3}{2}$ with the $\F$-basis 
\begin{gather}\label{basis:V(-d-1/2)}
v_0,
\quad 
v_d,
\quad 
v_i-i v_{i-1}
\quad 
\hbox{for $i=2,4,\ldots,d-1$}.
\end{gather}
\end{lem}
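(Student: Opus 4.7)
My plan is to read the result off directly from Lemma \ref{lem2:iota_Ed}, which already gives the matrix of $t_0$ in a carefully chosen basis.

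With respect to the ordered basis
$$
v_0,\ v_d;\qquad v_i - i v_{i-1}\ (i=2,4,\ldots,d-1);\qquad v_i\ (i=1,3,\ldots,d-2),
$$
the matrix $M$ of $t_0$ given in Lemma \ref{lem2:iota_Ed} is block-triangular with diagonal blocks $-\frac{d+1}{2}I_2$, $-\frac{d+1}{2}I_{(d-1)/2}$, $\frac{d+1}{2}I_{(d-1)/2}$ and a single off-diagonal block $I_{(d-1)/2}$ in the upper-right position of the lower $2\times 2$ block layout. Therefore
$$
M + \tfrac{d+1}{2}I_{d+1} \;=\; \begin{pmatrix} 0 & 0 & 0 \\ 0 & 0 & I_{(d-1)/2} \\ 0 & 0 & (d+1)\,I_{(d-1)/2} \end{pmatrix}.
$$
Since $\operatorname{char}(\F)=0$ the scalar $d+1$ is nonzero, so the kernel of this matrix consists exactly of coordinate vectors whose entries in the third block vanish. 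Translated back to vectors in $E_d(a,b,c)$, this is precisely the span of $v_0$, $v_d$, and $v_i - i v_{i-1}$ for $i=2,4,\ldots,d-1$.

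It remains to observe (a) that these $2 + \tfrac{d-1}{2} = \tfrac{d+3}{2}$ vectors are linearly independent, which is immediate since they form part of the basis used in Lemma \ref{lem2:iota_Ed}, and (b) that they lie in $E_d(a,b,c)\bigl(-\tfrac{d+1}{2}\bigr)$, which is again immediate from the block form of $M$ (the first two diagonal blocks are already $-\tfrac{d+1}{2}$ times the identity). Hence they form a basis of the eigenspace, and $\dim E_d(a,b,c)\bigl(-\tfrac{d+1}{2}\bigr) = \tfrac{d+3}{2}$.

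There is no real obstacle here: once Lemma \ref{lem2:iota_Ed} has rewritten the action of $t_0$ in a convenient basis, the eigenspace computation is one line of linear algebra. The only place where the formulas in Proposition \ref{prop:Ed} are actually used is in the proof of Lemma \ref{lem2:iota_Ed} itself (specifically equations (\ref{t0:Ed-1}) and (\ref{t0:Ed-2})), which already encode the identity $t_0 v_i = (v_{i+1}-(i+1)v_i) + \tfrac{d+1}{2}v_i$ for odd $i$ that drives the change of basis.
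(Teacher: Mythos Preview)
Your proof is correct and follows exactly the paper's approach: the paper's own proof simply says ``It is straightforward to verify the lemma by using Lemma \ref{lem2:iota_Ed},'' and you have spelled out that straightforward verification in detail.
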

\begin{proof}
It is straightforward to verify the lemma by using Lemma \ref{lem2:iota_Ed}.
\end{proof}

\begin{lem}\label{lem:AB_Ed}
The actions of $A$ and $B$ on the $\H$-module $E_d(a,b,c)$ are as follows:
\begin{align*}
A v_i &=
\left\{
\begin{array}{ll}
\displaystyle
\theta_i v_i -\frac{1}{2} v_{i+1}+\frac{1}{4} v_{i+2}
\qquad 
&\hbox{for $i=0,2,\ldots,d-3$},
\\
\displaystyle
\theta_i v_i +\frac{1}{4} v_{i+2}
\qquad 
&\hbox{for $i=1,3,\ldots,d-2$},
\end{array}
\right.
\\
A v_{d-1}&=\theta_{d-1} v_{d-1} -\frac{1}{2} v_d,
\qquad 
A v_d = \theta_d v_d,
\\
B v_i &=
\left\{
\begin{array}{ll}
\displaystyle
\theta^*_i v_i +\frac{i(d-i+1)}{4}\rho_{i-1} v_{i-2}
\qquad 
&\hbox{for $i=2,4,\ldots,d-1$},
\\
\displaystyle
\theta^*_i v_i -\frac{\rho_i}{2} v_{i-1}+\frac{(i-1)(d-i+2)}{4}\rho_i v_{i-2}
\qquad 
&\hbox{for $i=3,5,\ldots,d$},
\end{array}
\right.
\\
B v_0 &=
\theta^*_0 v_0,
\qquad 
B v_1 =
\theta^*_1 v_1 -\frac{\rho_1}{2} v_0,
\end{align*}
where 
\begin{align*}
\theta_i
&=
\left(\frac{a}{2}-\frac{d-1}{4}+\left\lceil \frac{i}{2}\right\rceil\right)
\left(\frac{a}{2}-\frac{d+3}{4}+\left\lceil \frac{i}{2}\right\rceil\right)
\qquad (0\leq i\leq d),
\\
\theta_i^*
&=
\left(\frac{b}{2}-\frac{d-1}{4}+\left\lceil \frac{i}{2}\right\rceil\right)
\left(\frac{b}{2}-\frac{d+3}{4}+\left\lceil \frac{i}{2}\right\rceil\right)
\qquad (0\leq i\leq d).
\end{align*}
\end{lem}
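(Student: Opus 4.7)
The plan is to work with the factorizations
\[
A = \frac{s(s+2)}{4}, \qquad s = t_1^\vee + t_0^\vee, \qquad\qquad
B = \frac{s'(s'+2)}{4}, \qquad s' = t_1 + t_1^\vee,
\]
so that both formulas follow by applying $s$, respectively $s'$, twice on the basis $\{v_i\}_{i=0}^d$ supplied by Proposition~\ref{prop:Ed}.

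First I will compute $sv_i$. Inspecting (\ref{t0vee:Ed-1})--(\ref{t1vee:Ed}), the $(\sigma+i)(\tau+i)v_{i-1}$ contributions of $t_0^\vee v_i$ and $t_1^\vee v_i$ cancel when $i$ is odd and are absent when $i$ is even, so after using $\sigma+\tau = 2a+2b-d-1$ to simplify, $s$ acts as an upper-bidiagonal operator: $sv_i = \alpha_i v_i - v_{i+1}$ for $i<d$ and $sv_d = \alpha_d v_d$, where
\[
\alpha_i = \begin{cases} -a + \tfrac{d-1}{2} - i & \text{if $i$ is even},\\ a - \tfrac{d+1}{2} + i & \text{if $i$ is odd}. \end{cases}
\]
The key arithmetic observation is that $\alpha_i+\alpha_{i+1} = 0$ when $i$ is even (including the boundary $i = d-1$) and $\alpha_i + \alpha_{i+1} = -2$ when $i$ is odd. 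Consequently $s^2 v_i = \alpha_i^2 v_i - (\alpha_i + \alpha_{i+1}) v_{i+1} + v_{i+2}$ (with the obvious truncation near $i=d$), so $Av_i = (s^2+2s)v_i/4$ collapses the $v_{i+1}$ coefficient to $-\tfrac{1}{2}$ or $0$ according to parity, and the diagonal coefficient $\frac{\alpha_i(\alpha_i+2)}{4}$ rewrites as $\theta_i$ by an elementary manipulation involving $\lceil i/2\rceil$. Assembling the subcases produces the claimed formulas for $Av_i$.

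The argument for $B$ runs in the opposite direction. Using (\ref{t1:Ed-1})--(\ref{t1:Ed-2}) and (\ref{t1vee:Ed}), the $v_{i+1}$ terms of $t_1 v_i$ and $t_1^\vee v_i$ cancel for even $i$, while for odd $i$ the only off-diagonal contribution is $(\sigma+i)(\tau+i)v_{i-1} = -\rho_i v_{i-1}$ coming from $t_1^\vee v_i$; for even $i\geq 2$, the action of $t_1$ contributes an additional $i(i-d-1) v_{i-1}$. Thus $s'$ is lower-bidiagonal, and a mirror calculation, combining the same telescoping observation with the identity $-i(i-d-1) = i(d-i+1)$, produces the claimed expressions for $Bv_i$, with the diagonal coefficient repackaging as $\theta_i^*$.

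The only real obstacle is bookkeeping: there are several subcases coming from the parity of $i$ together with the boundary values $i\in\{0,d-1,d\}$, and the rewriting of $\frac{\alpha_i(\alpha_i+2)}{4}$ and its $B$-analogue in terms of $\lceil i/2\rceil$ requires careful but routine arithmetic. No conceptual ingredient is needed beyond Theorem~\ref{thm:hom} and Proposition~\ref{prop:Ed}.
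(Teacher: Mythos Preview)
Your proposal is correct and follows the same approach as the paper, which simply says the lemma is routine to verify from Theorem~\ref{thm:hom} and Proposition~\ref{prop:Ed}. Your write-up is in fact more informative: the observations that $s=t_1^\vee+t_0^\vee$ and $s'=t_1+t_1^\vee$ act bidiagonally, together with the telescoping identities $\alpha_i+\alpha_{i+1}\in\{0,-2\}$, explain exactly why the matrices for $A$ and $B$ have the tridiagonal shape asserted.
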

\begin{proof}
It is routine to verify the lemma by applying Theorem \ref{thm:hom} and Proposition \ref{prop:Ed}.
\end{proof}

\begin{lem}\label{lem:AB_V(-d-1/2)}
The matrices representing $A$ and $B$ with respect to the $\F$-basis 
\begin{gather}\label{e:basis_V(-d-1/2)}
v_0, 
\quad 
\frac{1}{2^i}(v_i-i v_{i-1})
\quad \hbox{for $i=2,4,\ldots,d-1$},
\quad 
-\frac{(d+1)}{2^{d+1}} v_d
\end{gather}
for the $\Re$-module $E_d(a,b,c)(-\frac{d+1}{2})$ are 
$$
\begin{pmatrix}
\theta_0 & & &  &{\bf 0}
\\ 
1 &\theta_1 
\\
&1 &\theta_2
 \\
& &\ddots &\ddots
 \\
{\bf 0} & & &1 &\theta_\frac{d+1}{2}
\end{pmatrix},
\qquad 
\begin{pmatrix}
\theta_0^* &\varphi_1 &  & &{\bf 0}
\\ 
 &\theta_1^* &\varphi_2
\\
 &  &\theta_2^* &\ddots
 \\
 & & &\ddots &\varphi_{\frac{d+1}{2}}
 \\
{\bf 0}  & & & &\theta_\frac{d+1}{2}^*
\end{pmatrix}
$$
respectively, where 
\begin{align*}
\theta_i &=\frac{(2a-d+4i-3)(2a-d+4i+1)}{16}
\qquad (0\leq i\leq \textstyle\frac{d+1}{2}),
\\
\theta_i^* &= \frac{(2b-d+4i-3)(2b-d+4i+1)}{16}
\qquad (0\leq i\leq \textstyle\frac{d+1}{2}),
\\
\varphi_i &=
\frac{i(2i-d-3)(2a+2b+2c-d+4i-3)(2a+2b-2c-d+4i-3)}{32}
\qquad 
(1\leq i\leq \textstyle\frac{d+1}{2}).
\end{align*}
The element $\delta$ acts on the $\Re$-module $E_d(a,b,c)(-\frac{d+1}{2})$ as scalar multiplication by 
\begin{gather}\label{delta_V(-d-1/2)}
\frac{(d+1)(d+5)}{16}
+\frac{(a-1)(a+1)}{4}
+\frac{(b-1)(b+1)}{4}
+\frac{(c-1)(c+1)}{4}.
\end{gather}
\end{lem}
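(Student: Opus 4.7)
The plan is a direct computation using Lemma~\ref{lem:AB_Ed}. Re-index the basis (\ref{e:basis_V(-d-1/2)}) as $w_0,w_1,\dots,w_{(d+1)/2}$, with $w_0=v_0$, $w_j=2^{-2j}(v_{2j}-2j\,v_{2j-1})$ for $1\le j\le (d-1)/2$, and $w_{(d+1)/2}=-(d+1)\,2^{-(d+1)}v_d$; that this is an $\F$-basis of $E_d(a,b,c)(-\tfrac{d+1}{2})$ follows from Lemma~\ref{lem3:iota_Ed} after scaling. Two elementary observations streamline the bookkeeping. First, the quantities $\theta_i$ and $\theta_i^*$ in Lemma~\ref{lem:AB_Ed} depend on $i$ only through $\lceil i/2\rceil$, so $\theta_{2j-1}=\theta_{2j}$ and $\theta_{2j-1}^*=\theta_{2j}^*$; a short manipulation identifies this common value with the $\theta_j$ (respectively $\theta_j^*$) of the present lemma. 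Second, a difference-of-squares expansion of $\rho_{2j-1}=c^2-(a+b-\tfrac{d+1}{2}+2j-1)^2$ matches, up to a sign and a factor of $4$, the product $(2a+2b+2c-d+4j-3)(2a+2b-2c-d+4j-3)$ that appears in $\varphi_j$.

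For the action of $A$ on $w_j$ in the generic range $1\le j\le (d-3)/2$, Lemma~\ref{lem:AB_Ed} yields
\[
A(v_{2j}-2j\,v_{2j-1}) = \theta_{2j}(v_{2j}-2j\,v_{2j-1}) - \tfrac{j+1}{2}\,v_{2j+1} + \tfrac14\,v_{2j+2},
\]
where the coefficient of $v_{2j+1}$ comes from combining $-\tfrac12$ (from $Av_{2j}$) with $-\tfrac{2j}{4}$ (from $-2j\,Av_{2j-1}$). After dividing by $2^{2j}$ this rearranges to $Aw_j=\theta_j w_j + w_{j+1}$. For $B$ a parallel computation gives
\[
B(v_{2j}-2j\,v_{2j-1}) = \theta_{2j}^*(v_{2j}-2j\,v_{2j-1}) + \tfrac{j(d-2j+3)}{2}\rho_{2j-1}\bigl(v_{2j-2}-(2j-2)v_{2j-3}\bigr),
\]
in which the terms $\tfrac{2j(d-2j+1)}{4}\rho_{2j-1}v_{2j-2}$ from $Bv_{2j}$ and $j\rho_{2j-1}v_{2j-2}$ from $-2j\,Bv_{2j-1}$ combine through $\tfrac{j(d-2j+1)}{2}+j = \tfrac{j(d-2j+3)}{2}$. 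Dividing by $2^{2j}$ yields $Bw_j = \theta_j^* w_j + \tfrac{j(d-2j+3)}{8}\rho_{2j-1}w_{j-1}$, and the difference-of-squares identity above identifies this coefficient with $\varphi_j$. The boundary indices $j=0,1,(d-1)/2,(d+1)/2$ are handled individually using the special formulas $Av_0,Av_{d-1},Av_d,Bv_0,Bv_1,Bv_d$ from Lemma~\ref{lem:AB_Ed}; in particular, $Aw_{(d+1)/2}=\theta_{(d+1)/2}w_{(d+1)/2}$ and $Bw_0=\theta_0^* w_0$ arise because $Av_d=\theta_d v_d$ and $Bv_0=\theta_0^* v_0$ have no off-diagonal contribution.

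For the scalar action of $\delta$, use Theorem~\ref{thm:hom} to rewrite $\zeta(\delta)=\tfrac14(t_0^2+t_1^2+t_0^{\vee 2}+t_1^{\vee 2})-\tfrac12 t_0-\tfrac34$. By Lemma~\ref{lem:t^2_Ed} the four squares act on all of $E_d(a,b,c)$ as the scalars $\tfrac{(d+1)^2}{4},a^2,b^2,c^2$, while $t_0$ acts as $-\tfrac{d+1}{2}$ on $E_d(a,b,c)(-\tfrac{d+1}{2})$ by definition of the eigenspace; substituting and regrouping via $(d+1)^2+4(d+1)=(d+1)(d+5)$ and $a^2-1=(a-1)(a+1)$ yields exactly (\ref{delta_V(-d-1/2)}). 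The main obstacle in the whole argument is purely bookkeeping: keeping the even/odd index distinction in Lemma~\ref{lem:AB_Ed}, the powers of $2$ hidden in the $w_j$, and the four boundary cases where the generic recursions must be replaced by the special formulas for $v_0,v_1,v_{d-1},v_d$.
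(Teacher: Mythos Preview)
Your proposal is correct and follows essentially the same approach as the paper: the paper's proof simply invokes Lemma~\ref{lem3:iota_Ed} to confirm that (\ref{e:basis_V(-d-1/2)}) is a basis, appeals to Lemma~\ref{lem:AB_Ed} for a ``direct calculation'' of the $A$- and $B$-matrices, and then uses Theorem~\ref{thm:hom} together with Lemma~\ref{lem:t^2_Ed} for the action of $\delta$. Your write-up supplies exactly the details behind that direct calculation, including the key simplifications $\theta_{2j-1}=\theta_{2j}$ and the identification of $\tfrac{j(d-2j+3)}{8}\rho_{2j-1}$ with $\varphi_j$.
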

\begin{proof}
By Lemma \ref{lem3:iota_Ed} the vectors (\ref{e:basis_V(-d-1/2)}) are an $\F$-basis for $E_d(a,b,c)(-\frac{d+1}{2})$.
Applying Lemma \ref{lem:AB_Ed} a direct calculation yields the matrices representing $A$ and $B$ with respect to (\ref{e:basis_V(-d-1/2)}). By Theorem \ref{thm:hom} and Lemma \ref{lem:t^2_Ed} the element $\delta$ acts on $E_d(a,b,c)(-\frac{d+1}{2})$ as  scalar multiplication by (\ref{delta_V(-d-1/2)}). The lemma follows.
\end{proof}

\begin{prop}\label{prop:Rmodule_V(-d-1/2)}
The $\Re$-module $E_d(a,b,c)(-\frac{d+1}{2})$ is isomorphic to 
$$
R_{\frac{d+1}{2}}\left(
-\frac{a+1}{2},
-\frac{b+1}{2},
-\frac{c+1}{2}
\right).
$$
Moreover the $\Re$-module $E_d(a,b,c)(-\frac{d+1}{2})$ is irreducible provided that the $\H$-module $E_d(a,b,c)$ is irreducible.
\end{prop}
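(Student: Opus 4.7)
The plan is to identify $E_d(a,b,c)(-\tfrac{d+1}{2})$ with the $\Re$-module $R_{\frac{d+1}{2}}(-\tfrac{a+1}{2},-\tfrac{b+1}{2},-\tfrac{c+1}{2})$ by matching the generating actions. By Lemma~\ref{lem:delta}(ii), $\Re$ is generated by $A,B,\delta$, so an $\F$-linear bijection between two $\Re$-modules is an $\Re$-module isomorphism as soon as it intertwines $A$, $B$, and $\delta$. I therefore only need to compare matrix representations of $A$ and $B$, together with the scalar by which $\delta$ acts, on both sides.

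The second step is the calculation. Lemma~\ref{lem:AB_V(-d-1/2)} gives explicit matrices for $A,B$ on $E_d(a,b,c)(-\tfrac{d+1}{2})$ in the basis \eqref{e:basis_V(-d-1/2)}, together with the scalar in \eqref{delta_V(-d-1/2)} for $\delta$. Plugging $(d,a,b,c)\mapsto (\tfrac{d+1}{2},-\tfrac{a+1}{2},-\tfrac{b+1}{2},-\tfrac{c+1}{2})$ into the formulas of Proposition~\ref{prop:Rd}, a short algebraic manipulation (clearing denominators of $4$ and absorbing two sign changes in the two factors of each $\varphi_i$) converts the expressions
$(a+\tfrac{d}{2}-i)(a+\tfrac{d}{2}-i+1)$, etc., into the form
$\tfrac{(2a-d+4i-3)(2a-d+4i+1)}{16}$ that appears in Lemma~\ref{lem:AB_V(-d-1/2)}; the same computation works for $\theta_i^{*}$ and for the quadruple product in $\varphi_i$. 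For $\delta$, the substitution of these parameters into $\tfrac{d}{2}(\tfrac{d}{2}+1)+a(a+1)+b(b+1)+c(c+1)$ is a direct match with \eqref{delta_V(-d-1/2)} after noting that $(-\tfrac{x+1}{2})(-\tfrac{x+1}{2}+1)=\tfrac{(x-1)(x+1)}{4}$. This establishes the first assertion.

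For the irreducibility claim, I would combine Propositions~\ref{prop:irr_E} and \ref{prop:irr_R}. Specializing Proposition~\ref{prop:irr_R} to dimension $\tfrac{d+1}{2}+1$ with the chosen parameters, the four quantities to be avoided become
$$
a'+b'+c'+1=-\tfrac{a+b+c+1}{2},\quad -a'+b'+c'=\tfrac{a-b-c-1}{2},\quad a'-b'+c'=\tfrac{-a+b-c-1}{2},\quad a'+b'-c'=\tfrac{-a-b+c-1}{2},
$$
and the forbidden set is $\{\tfrac{d+1}{4}-i\mid i=1,\dots,\tfrac{d+1}{2}\}$. Solving, for example, $-\tfrac{a+b+c+1}{2}=\tfrac{d+1}{4}-i$ rewrites the constraint as $a+b+c\notin\{\tfrac{d-1}{2}-i\mid i=0,2,\dots,d-1\}$, which is exactly one of the four hypotheses in Proposition~\ref{prop:irr_E}; the other three match in the same way after using that this forbidden set is symmetric about $0$. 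Hence irreducibility of $E_d(a,b,c)$ forces irreducibility of $R_{\frac{d+1}{2}}(-\tfrac{a+1}{2},-\tfrac{b+1}{2},-\tfrac{c+1}{2})$, and via the isomorphism of the first part, of $E_d(a,b,c)(-\tfrac{d+1}{2})$. The main bookkeeping obstacle is purely the careful parameter-tracking in steps two and three; conceptually nothing new is needed beyond Lemma~\ref{lem:delta}(ii) and the classification results already invoked.
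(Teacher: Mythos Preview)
Your proposal is correct and follows essentially the same approach as the paper: substitute $(a',b',c',d')=(-\tfrac{a+1}{2},-\tfrac{b+1}{2},-\tfrac{c+1}{2},\tfrac{d+1}{2})$ into Proposition~\ref{prop:Rd}, match the resulting data against Lemma~\ref{lem:AB_V(-d-1/2)}, and then translate the irreducibility criterion of Proposition~\ref{prop:irr_R} back through Proposition~\ref{prop:irr_E}. Your explicit invocation of Lemma~\ref{lem:delta}(ii) and of the symmetry of the forbidden set about $0$ makes transparent two points that the paper leaves implicit, but the argument is the same.
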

\begin{proof}
Set $(a',b',c',d')=(-\frac{a+1}{2},-\frac{b+1}{2},-\frac{c+1}{2},\frac{d+1}{2})$.
Comparing Proposition \ref{prop:Rd} with Lemma \ref{lem:AB_V(-d-1/2)} it follows that the $\Re$-module $E_d(a,b,c)(-\frac{d+1}{2})$ is isomorphic to $R_{d'}(a',b',c')$. Suppose that the $\H$-module $E_d(a,b,c)$ is irreducible. Using Proposition \ref{prop:irr_E} yields that 
\begin{gather*}
a'+b'+c'+1,
-a'+b'+c',
a'-b'+c',
a'+b'-c'
\not\in
\left\{
\frac{d'}{2}-i
\,\bigg|
\,
i=1,2,\ldots,d'
\right\}.
\end{gather*}
By Proposition \ref{prop:irr_R} the $\Re$-module $R_{d'}(a',b',c')$ is irreducible. The proposition follows.
\end{proof}

\begin{lem}\label{lem:AB_V/V(-d-1/2)}
Suppose that $d\geq 3$. Then the matrices representing $A$ and $B$ with respect to the $\F$-basis 
\begin{gather}\label{basis:V/V(-d-1/2)}
\frac{1}{2^{i-1}} v_i+E_d(a,b,c)(\textstyle-\frac{d+1}{2})
\qquad \hbox{for $i=1,3,\ldots,d-2$}
\end{gather}
for the $\Re$-module $E_d(a,b,c)/E_d(a,b,c)(-\frac{d+1}{2})$ are 
$$
\begin{pmatrix}
\theta_0 & & &  &{\bf 0}
\\ 
1 &\theta_1 
\\
&1 &\theta_2
 \\
& &\ddots &\ddots
 \\
{\bf 0} & & &1 &\theta_\frac{d-3}{2}
\end{pmatrix},
\qquad 
\begin{pmatrix}
\theta_0^* &\varphi_1 &  & &{\bf 0}
\\ 
 &\theta_1^* &\varphi_2
\\
 &  &\theta_2^* &\ddots
 \\
 & & &\ddots &\varphi_{\frac{d-3}{2}}
 \\
{\bf 0}  & & & &\theta_\frac{d-3}{2}^*
\end{pmatrix}
$$
respectively, where 
\begin{align*}
\theta_i &=
\frac{(2a-d+4i+5)(2a-d+4i+1)}{16}
\qquad (0\leq i\leq \textstyle\frac{d-3}{2}),
\\
\theta_i^* &=
\frac{(2b-d+4i+5)(2b-d+4i+1)}{16}
\qquad (0\leq i\leq \textstyle\frac{d-3}{2}),
\\
\varphi_i &=
\frac{i(2i-d+1)(2a+2b+2c-d+4i+1)(2a+2b-2c-d+4i+1)}{32}
\qquad (1\leq i\leq \textstyle\frac{d-3}{2}).
\end{align*}
The element $\delta$ acts on the $\Re$-module $E_d(a,b,c)/E_d(a,b,c)(-\frac{d+1}{2})$ as scalar multiplication by 
\begin{gather}\label{delta_V/V(-d-1/2)}
\frac{(d-3)(d+1)}{16}
+\frac{(a-1)(a+1)}{4}
+\frac{(b-1)(b+1)}{4}
+\frac{(c-1)(c+1)}{4}.
\end{gather}
\end{lem}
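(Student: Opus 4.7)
The argument follows the strategy of Lemma \ref{lem:AB_V(-d-1/2)}. First I would check that the images of $\frac{1}{2^{i-1}} v_i$ for odd $i \in \{1,3,\ldots,d-2\}$ form an $\F$-basis of the quotient $E_d(a,b,c)/E_d(a,b,c)(-\frac{d+1}{2})$. The $\frac{d-1}{2}$ vectors $v_i$ for odd $i$ in that range, together with the $\frac{d+3}{2}$ basis vectors of $E_d(a,b,c)(-\frac{d+1}{2})$ supplied by Lemma \ref{lem3:iota_Ed}, are related to $\{v_0,\ldots,v_d\}$ by a unitriangular change of basis and hence themselves form an $\F$-basis of $E_d(a,b,c)$; reduction modulo the submodule then yields the displayed basis of the quotient.

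Next I would use Lemma \ref{lem:AB_Ed} to compute the actions of $A$ and $B$ on the chosen representatives and then reduce modulo the submodule. For odd $i\in\{1,3,\ldots,d-2\}$, Lemma \ref{lem:AB_Ed} gives $A v_i = \theta_i v_i + \frac{1}{4} v_{i+2}$; when $i=d-2$ the summand $\frac{1}{4} v_d$ lies in $E_d(a,b,c)(-\frac{d+1}{2})$ and so vanishes in the quotient, while for $i\leq d-4$ the vector $v_{i+2}$ is itself a chosen representative. Rescaling by $\frac{1}{2^{i-1}}$ converts the coefficient $\frac{1}{4}$ into the subdiagonal entry $1$, giving the matrix for $A$. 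For $B$ with odd $i\geq 3$, Lemma \ref{lem:AB_Ed} produces terms in $v_i$, $v_{i-1}$, and $v_{i-2}$; the congruence $v_{i-1}\equiv (i-1) v_{i-2}\pmod{E_d(a,b,c)(-\frac{d+1}{2})}$, immediate from Lemma \ref{lem3:iota_Ed}, merges the two contributions to $v_{i-2}$ into $\frac{(i-1)(d-i)\rho_i}{4} v_{i-2}$, which matches $\varphi_{(i-1)/2}$ after rescaling once one factors
\[
\rho_{i} \;=\; -\frac{(2a+2b+2c-d-1+2i)(2a+2b-2c-d-1+2i)}{4}.
\]
The base case $B v_1 \equiv \theta_1^* v_1$ is immediate from $v_0 \in E_d(a,b,c)(-\frac{d+1}{2})$. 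A routine index translation $i \leftrightarrow 2i+1$ reconciles the $\theta_i$, $\theta_i^*$ of Lemma \ref{lem:AB_Ed} with the closed-form expressions asserted here.

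For the scalar action of $\delta$, I would invoke Theorem \ref{thm:hom}, which gives
\[
\delta \;=\; \frac{t_0^2+t_1^2+t_0^{\vee 2}+t_1^{\vee 2}}{4} - \frac{t_0}{2} - \frac{3}{4}.
\]
By Lemma \ref{lem:iota_Ed}, when $d\geq 3$ the element $t_0$ is diagonalizable on $E_d(a,b,c)$ with eigenvalues $\pm\frac{d+1}{2}$; since the submodule is the $(-\frac{d+1}{2})$-eigenspace, $t_0$ acts on the quotient as multiplication by $\frac{d+1}{2}$. This is the sole sign change from the parallel computation in Lemma \ref{lem:AB_V(-d-1/2)}. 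Combining this with the scalar actions from Lemma \ref{lem:t^2_Ed} and simplifying replaces the term $\frac{(d+1)(d+5)}{16}$ appearing in (\ref{delta_V(-d-1/2)}) by $\frac{(d-3)(d+1)}{16}$, yielding (\ref{delta_V/V(-d-1/2)}).

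The main obstacle is purely computational: carefully tracking the boundary behavior of Lemma \ref{lem:AB_Ed} once $i$ is restricted to odd values, and verifying the two polynomial identities needed to match the factorization of $\rho_i$ to the claimed formula for $\varphi_i$ and the half-index shift $i\leftrightarrow 2i+1$ to the closed forms of $\theta_i$, $\theta_i^*$. No new conceptual ingredient beyond the eigenvalue structure of $t_0$ already established is required.
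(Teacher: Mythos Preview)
Your proposal is correct and follows essentially the same approach as the paper: use Lemma~\ref{lem3:iota_Ed} to identify the basis of the quotient, apply Lemma~\ref{lem:AB_Ed} and reduce modulo the submodule to obtain the matrices for $A$ and $B$, and then combine Theorem~\ref{thm:hom} with Lemmas~\ref{lem2:iota_Ed} (you cite the equivalent Lemma~\ref{lem:iota_Ed}) and~\ref{lem:t^2_Ed} to determine the scalar for $\delta$. Your write-up in fact spells out the computational details the paper summarizes as ``a direct calculation''.
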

\begin{proof}
By Lemma \ref{lem3:iota_Ed} the cosets (\ref{basis:V/V(-d-1/2)}) are an $\F$-basis for $E_d(a,b,c)/E_d(a,b,c)(-\frac{d+1}{2})$.
Applying Lemma \ref{lem:AB_Ed} a direct calculation yields the matrices representing $A$ and $B$ with respect to (\ref{basis:V/V(-d-1/2)}). By Lemma \ref{lem2:iota_Ed} the element $t_0$ acts on $E_d(a,b,c)/E_d(a,b,c)(-\frac{d+1}{2})$ as scalar multiplication by $\frac{d+1}{2}$. Combined with Theorem \ref{thm:hom} and Lemma \ref{lem:t^2_Ed}, it follows that $\delta$ acts on $E_d(a,b,c)/E_d(a,b,c)(-\frac{d+1}{2})$ as scalar multiplication by (\ref{delta_V/V(-d-1/2)}). The lemma follows.
\end{proof}

\begin{prop}\label{prop:Rmodule_V/V(-d-1/2)}
Suppose that $d\geq 3$. Then the $\Re$-module $E_d(a,b,c)/E_d(a,b,c)(-\frac{d+1}{2})$ is  isomorphic to 
$$
R_{\frac{d-3}{2}}\left(
-\frac{a+1}{2},
-\frac{b+1}{2},
-\frac{c+1}{2}
\right).
$$
Moreover the $\Re$-module $E_d(a,b,c)/E_d(a,b,c)(-\frac{d+1}{2})$ is irreducible provided that the $\H$-module $E_d(a,b,c)$ is irreducible.
\end{prop}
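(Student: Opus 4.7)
The plan is to mirror the argument in the proof of Proposition \ref{prop:Rmodule_V(-d-1/2)}, replacing the submodule by the quotient and reusing the matrix data supplied by Lemma \ref{lem:AB_V/V(-d-1/2)}. Set
$$
(a',b',c',d')=\Bigl(-\tfrac{a+1}{2},\,-\tfrac{b+1}{2},\,-\tfrac{c+1}{2},\,\tfrac{d-3}{2}\Bigr).
$$
First I would read off from Lemma \ref{lem:AB_V/V(-d-1/2)} the matrices for $A$ and $B$ on the quotient in the basis (\ref{basis:V/V(-d-1/2)}) and show, by direct substitution, that these coincide with the lower-triangular matrix for $A$ and upper-triangular matrix for $B$ given in Proposition \ref{prop:Rd}(i) for $R_{d'}(a',b',c')$. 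The scalar recorded in Proposition \ref{prop:Rd}(ii) for $\delta$ on $R_{d'}(a',b',c')$ is
$$
\tfrac{d'}{2}(\tfrac{d'}{2}+1)+a'(a'+1)+b'(b'+1)+c'(c'+1),
$$
and I would verify this equals the expression (\ref{delta_V/V(-d-1/2)}) obtained in Lemma \ref{lem:AB_V/V(-d-1/2)}. Since $\Re$ is generated by $A$, $B$ and $\delta$ by Lemma \ref{lem:delta}(ii), the matching of these three actions yields the desired $\Re$-module isomorphism.

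For the irreducibility assertion I would assume that the $\H$-module $E_d(a,b,c)$ is irreducible and apply Proposition \ref{prop:irr_E}. This gives
$$
a+b+c,\ -a+b+c,\ a-b+c,\ a+b-c\ \notin\ \Bigl\{\tfrac{d-1}{2}-i\ \Big|\ i=0,2,\ldots,d-1\Bigr\}.
$$
I would then translate the avoidance condition of Proposition \ref{prop:irr_R} for $R_{d'}(a',b',c')$, namely
$$
a'+b'+c'+1,\ -a'+b'+c',\ a'-b'+c',\ a'+b'-c'\ \notin\ \Bigl\{\tfrac{d'}{2}-i\ \Big|\ i=1,\ldots,d'\Bigr\},
$$
into the variables $a,b,c$. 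A short calculation (using $-2a'=a+1$, etc., and $d'=\tfrac{d-3}{2}$) rewrites this as the requirement that $a+b+c$, $-a+b+c$, $a-b+c$, $a+b-c$ avoid the set $\bigl\{2i-\tfrac{d-1}{2}\ \big|\ i=1,\ldots,\tfrac{d-3}{2}\bigr\}$, which is a subset of the forbidden set for $E_d(a,b,c)$. Hence the hypothesis forces the conclusion, and Proposition \ref{prop:irr_R} yields the irreducibility of $R_{d'}(a',b',c')$, whence of the quotient.

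The main obstacle is purely organizational: one must keep track of the reindexing shift that turns the basis (\ref{basis:V/V(-d-1/2)}) (which uses odd indices $1,3,\ldots,d-2$) into a standard $R_{d'}$-basis indexed by $0,1,\ldots,d'$, and verify that the scaling factors $1/2^{i-1}$ are precisely those that convert the sub-diagonal entries of $A$ to $1$ while matching the off-diagonal entries of $B$ with the $\varphi_i$ in Proposition \ref{prop:Rd}(i). Once the index identification $i\mapsto\tfrac{i-1}{2}$ is fixed, the verification is a routine comparison of rational expressions.
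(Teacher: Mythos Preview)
Your proposal is correct and follows essentially the same approach as the paper's proof: set $(a',b',c',d')=(-\tfrac{a+1}{2},-\tfrac{b+1}{2},-\tfrac{c+1}{2},\tfrac{d-3}{2})$, compare Lemma~\ref{lem:AB_V/V(-d-1/2)} with Proposition~\ref{prop:Rd} to obtain the isomorphism, and then translate the irreducibility condition of Proposition~\ref{prop:irr_E} into the primed variables to invoke Proposition~\ref{prop:irr_R}. The paper records the translated avoidance directly in the primed variables over the slightly larger range $i=0,1,\ldots,d'+1$, whereas you verify the subset inclusion after translating back to $a,b,c$; both are routine and equivalent.
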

\begin{proof}
Set $(a',b',c',d')=(
-\frac{a+1}{2},
-\frac{b+1}{2},
-\frac{c+1}{2},
\frac{d-3}{2})$.
Comparing Proposition \ref{prop:Rd} with
Lemma \ref{lem:AB_V/V(-d-1/2)} the quotient $\Re$-module $E_d(a,b,c)/E_d(a,b,c)(-\frac{d+1}{2})$ is isomorphic to $R_{d'}(a',b',c')$. Suppose that the $\H$-module $E_d(a,b,c)$ is irreducible. Using Proposition \ref{prop:irr_E} yields that 
\begin{gather*}
a'+b'+c'+1,
-a'+b'+c',
a'-b'+c',
a'+b'-c'
\not\in
\left\{
\frac{d'}{2}-i
\,\bigg|
\,
i=0,1,\ldots,d'+1
\right\}.
\end{gather*}
By Proposition \ref{prop:irr_R} the $\Re$-module $R_{d'}(a',b',c')$ is irreducible. The proposition follows.
\end{proof}

\begin{thm}\label{thm:Ed}
Assume that the $\H$-module $E_d(a,b,c)$ is irreducible. Then the following hold:
\begin{enumerate}
\item If $d=1$ then the $\Re$-module $E_d(a,b,c)$ is irreducible.

\item If $d\geq 3$ then
\begin{table}[H]
\begin{tikzpicture}[node distance=1.2cm]
 \node (0)                  {$\{0\}$};
 \node (1)  [above of=0]   {};
 \node (2)  [right of=1]   {};
 \node (3)  [left of=1]   {};
 \node (E1)  [right of=2]  {$E_d(a,b,c)(\frac{d+1}{2})$};
 \node (E2)  [left of=3]   {$E_d(a,b,c)(-\frac{d+1}{2})$};
 \node (V) [above of=1]  {$E_d(a,b,c)$};
 \draw (0)   -- (E1);
 \draw (0)   -- (E2);
 \draw (E1)   -- (V);
 \draw (E2)  -- (V);
\end{tikzpicture}
\end{table}
\noindent is the lattice of $\Re$-submodules of $E_d(a,b,c)$.
\end{enumerate}
\end{thm}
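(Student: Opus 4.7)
The plan is as follows. Part (i) is nearly immediate: Lemma~\ref{lem:iota_Ed}(i) shows that when $d=1$ the element $t_0$ acts on $E_1(a,b,c)$ with the unique eigenvalue $-1$, so $E_1(a,b,c)=E_1(a,b,c)(-1)$, and Proposition~\ref{prop:Rmodule_V(-d-1/2)} already identifies this space with an irreducible Racah module.

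For part (ii) I would split the argument into three stages. First, assemble the two candidate $\Re$-submodules. By Lemma~\ref{lem:iota_Ed}(ii), $t_0$ is diagonalizable on $E_d(a,b,c)$ with exactly the two eigenvalues $\pm\tfrac{d+1}{2}$, giving a vector-space direct sum $E_d(a,b,c)=E_d(a,b,c)(-\tfrac{d+1}{2})\oplus E_d(a,b,c)(\tfrac{d+1}{2})$ with both summands nonzero. Proposition~\ref{prop:t0eigenspace=Rmodule} makes both summands $\Re$-submodules, and Proposition~\ref{prop:Rmodule_V(-d-1/2)} says $E_d(a,b,c)(-\tfrac{d+1}{2})$ is irreducible. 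Since the decomposition is already one of $\Re$-modules, the projection onto the second summand descends to an $\Re$-module isomorphism $E_d(a,b,c)/E_d(a,b,c)(-\tfrac{d+1}{2})\cong E_d(a,b,c)(\tfrac{d+1}{2})$; Proposition~\ref{prop:Rmodule_V/V(-d-1/2)} shows the left-hand side is irreducible, so the right-hand side is as well.

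Second, I would show that every nonzero $\Re$-submodule $W\subseteq E_d(a,b,c)$ contains at least one of the two eigenspaces. Being finite-dimensional, $W$ contains some irreducible $\Re$-submodule $W'$. Proposition~\ref{prop:irr_Rmodule_in_t0eigenspace} locates $W'$ inside some $E_d(a,b,c)(\theta)$, and Lemma~\ref{lem:iota_Ed}(ii) forces $\theta\in\{\pm\tfrac{d+1}{2}\}$. Since the first stage shows $E_d(a,b,c)(\theta)$ is itself irreducible, $W'=E_d(a,b,c)(\theta)\subseteq W$.

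A short case analysis now finishes the classification. If $W$ contains both eigenspaces then $W=E_d(a,b,c)$ by the direct sum. Otherwise $W$ contains exactly one eigenspace $E_d(a,b,c)(\theta)$, and $W/E_d(a,b,c)(\theta)$ is an $\Re$-submodule of the irreducible quotient $E_d(a,b,c)/E_d(a,b,c)(\theta)\cong E_d(a,b,c)(-\theta)$; so it is either $\{0\}$, giving $W=E_d(a,b,c)(\theta)$, or the entire quotient, which would force $W\supseteq E_d(a,b,c)(-\theta)$ and contradict our assumption. I do not anticipate a real obstacle: the only step that demands any thought is the identification, via the direct-sum splitting, of the second eigenspace with the quotient treated in Proposition~\ref{prop:Rmodule_V/V(-d-1/2)}; everything afterwards is bookkeeping.
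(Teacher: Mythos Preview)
Your proposal is correct and follows essentially the same path as the paper: both arguments use Proposition~\ref{prop:Rmodule_V(-d-1/2)} and Proposition~\ref{prop:Rmodule_V/V(-d-1/2)} to establish irreducibility of the two pieces, and both invoke Proposition~\ref{prop:irr_Rmodule_in_t0eigenspace} together with Lemma~\ref{lem:iota_Ed}(ii) to rule out any other irreducible $\Re$-submodules.

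The one noteworthy difference is how irreducibility of $E_d(a,b,c)(\tfrac{d+1}{2})$ is obtained. The paper first records the composition series through $E_d(a,b,c)(-\tfrac{d+1}{2})$ and then appeals to the Jordan--H\"older theorem to force the chain through $E_d(a,b,c)(\tfrac{d+1}{2})$ to also be a composition series. You instead exploit the diagonalizability of $t_0$ to obtain an $\Re$-module direct-sum splitting $E_d(a,b,c)=E_d(a,b,c)(-\tfrac{d+1}{2})\oplus E_d(a,b,c)(\tfrac{d+1}{2})$, which immediately identifies the second eigenspace with the quotient handled by Proposition~\ref{prop:Rmodule_V/V(-d-1/2)}. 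Your route is marginally more elementary (no Jordan--H\"older), while the paper's phrasing via composition series is what generalizes cleanly to the non-diagonalizable cases treated later (Theorems~\ref{thm:Ed(1,-1)}--\ref{thm:Od}), where no such splitting is available.
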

\begin{proof}
(i): Suppose that $d=1$. Then $E_d(a,b,c)=E_d(a,b,c)(-\frac{d+1}{2})$ 
by Lemma \ref{lem:iota_Ed}(i). It follows from Proposition \ref{prop:Rmodule_V(-d-1/2)} that the $\Re$-module $E_d(a,b,c)$ is irreducible.  The statement (i) follows.

(ii): Suppose that $d\geq 3$. Combining Propositions \ref{prop:Rmodule_V(-d-1/2)} and  \ref{prop:Rmodule_V/V(-d-1/2)} yields that
\begin{gather}\label{csEd-1}
\textstyle
\{0\}\subset E_d(a,b,c)(-\frac{d+1}{2})\subset E_d(a,b,c)
\end{gather}
is a composition series for the $\Re$-module $E_d(a,b,c)$. 
By Proposition \ref{prop:t0eigenspace=Rmodule} and Lemma \ref{lem:iota_Ed}(ii), $E_d(a,b,c)(\frac{d+1}{2})$ is a nonzero $\Re$-submodule of $E_d(a,b,c)$. 
By Jordan--H\"{o}lder theorem the sequence
\begin{gather}\label{csEd-2}
\textstyle
\{0\}\subset E_d(a,b,c)(\frac{d+1}{2})\subset E_d(a,b,c)
\end{gather}
is a composition series for the $\Re$-module $E_d(a,b,c)$. 
It follows from Proposition \ref{prop:irr_Rmodule_in_t0eigenspace} that there is no other irreducible $\Re$-submodule of $E_d(a,b,c)$. Hence (\ref{csEd-1}) and (\ref{csEd-2}) are the unique two composition series for the $\Re$-module $E_d(a,b,c)$. The statement (ii) follows.
\end{proof}

\subsection{The lattice of $\Re$-submodules of $E_d(a,b,c)^{(1,-1)}$}\label{s:lattice_Ed(1,-1)}

\begin{lem}\label{lem2:iota_Ed(1,-1)}
The matrix representing $t_0$ with respect to the $\F$-basis 
\begin{gather*}
v_1,
\quad 
v_{i+1}-i(d-i+1) v_{i-1}
\quad 
\hbox{for $i=2,4,\ldots,d-1$},
\quad 
v_i 
\quad 
\hbox{for $i=0,2,\ldots,d-1$}
\end{gather*}
for $E_d(a,b,c)^{(1,-1)}$ is 
\begin{gather*}
\begin{pmatrix}
-a I_{\frac{d+1}{2}} & \rvline & I_{\frac{d+1}{2}}
\\
\hline
{\bf 0} & \rvline &a I_{\frac{d+1}{2}} 
\end{pmatrix}.
\end{gather*}
\end{lem}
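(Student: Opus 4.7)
The plan is to unwind the definition of the twist and then do a direct computation using Proposition \ref{prop:Ed}. By Table \ref{pm1-action}, the automorphism $u^{(1,-1)}$ swaps $t_0 \leftrightarrow t_1$, so the action of $t_0$ on $E_d(a,b,c)^{(1,-1)}$ coincides with the action of $t_1$ on $E_d(a,b,c)$. Hence it suffices to describe the matrix of $t_1$ (as given by (\ref{t1:Ed-1}) and (\ref{t1:Ed-2})) with respect to the proposed basis.

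The first step is to observe the \emph{clean} behavior on odd-indexed vectors. From (\ref{t1:Ed-1}), for odd $j\in\{1,3,\ldots,d\}$ one has $t_1 v_j=-av_j$; consequently any linear combination of $v_1,v_3,\ldots,v_d$ is a $(-a)$-eigenvector of $t_1$. In particular $t_1 v_1=-av_1$, and for $i=2,4,\ldots,d-1$ (even) both indices $i-1$ and $i+1$ are odd, so
\[
t_1\bigl(v_{i+1}-i(d-i+1)v_{i-1}\bigr)=-a\bigl(v_{i+1}-i(d-i+1)v_{i-1}\bigr).
\]
This gives the upper-left block $-a\,I_{(d+1)/2}$ and shows that the lower-left block vanishes, since $t_1$ preserves the span of the first $(d+1)/2$ basis vectors.

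The second step handles the even-indexed vectors $v_0,v_2,\ldots,v_{d-1}$, which form the second block. From (\ref{t1:Ed-2}) we already have $t_1v_0=av_0+v_1$. For $i\in\{2,4,\ldots,d-1\}$, (\ref{t1:Ed-1}) gives $t_1v_i=i(i-d-1)v_{i-1}+av_i+v_{i+1}$, and using the identity $i(i-d-1)=-i(d-i+1)$ this rewrites as
\[
t_1 v_i = a v_i + \bigl(v_{i+1}-i(d-i+1)v_{i-1}\bigr),
\]
i.e.\ $av_i$ plus the corresponding vector from the first block. Hence each column indexed by the second block contributes $a$ on the diagonal (lower-right block $a\,I_{(d+1)/2}$) together with a $1$ in exactly the matching row of the first block (upper-right block $I_{(d+1)/2}$).

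Combining these two computations in the stated order of basis vectors yields precisely the claimed block matrix. The only real bookkeeping hazard is lining up the basis orderings so that the upper-right block comes out as the identity rather than some permutation or scaling; with the ordering $v_1,w_2,w_4,\ldots,w_{d-1}$ in the first block and $v_0,v_2,\ldots,v_{d-1}$ in the second (where $w_i=v_{i+1}-i(d-i+1)v_{i-1}$), the pairing $v_0\leftrightarrow v_1$ and $v_i\leftrightarrow w_i$ is forced by the calculation above, so no further verification is needed beyond confirming that the total count $1+\tfrac{d-1}{2}+\tfrac{d+1}{2}=d+1$ produces a genuine basis.
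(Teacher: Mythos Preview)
Your proof is correct and follows exactly the approach of the paper: identify the $t_0$-action on the twist with the $t_1$-action on $E_d(a,b,c)$ via Table~\ref{pm1-action}, and then verify the matrix directly from (\ref{t1:Ed-1}) and (\ref{t1:Ed-2}). The paper simply records this as a routine verification, whereas you have spelled out the block-by-block computation explicitly.
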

\begin{proof}
By Table \ref{pm1-action} the action of $t_0$ on $E_d(a,b,c)^{(1,-1)}$ corresponds to the action of $t_1$ on $E_d(a,b,c)$. 
By (\ref{t1:Ed-1}) and (\ref{t1:Ed-2}) it is routine to verify the lemma.
\end{proof}

\begin{lem}\label{lem:iota_Ed(1,-1)}
\begin{enumerate}
\item If $a=0$ then $t_0$ is not diagonalizable on $E_d(a,b,c)^{(1,-1)}$ with exactly one eigenvalue $0$.

\item If $a\not=0$ then $t_0$ is diagonalizable on $E_d(a,b,c)^{(1,-1)}$ with exactly two eigenvalues $\pm a$. 
\end{enumerate}
\end{lem}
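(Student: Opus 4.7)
The plan is to read off everything directly from the block-upper-triangular matrix supplied by Lemma~\ref{lem2:iota_Ed(1,-1)}, namely
$$
M := \begin{pmatrix}
-a\, I_{\frac{d+1}{2}} & I_{\frac{d+1}{2}}
\\
{\bf 0} & a\, I_{\frac{d+1}{2}}
\end{pmatrix}.
$$
Since $M$ is block upper triangular, its characteristic polynomial factors as $(x+a)^{\frac{d+1}{2}}(x-a)^{\frac{d+1}{2}}$, so the set of eigenvalues of $t_0$ on $E_d(a,b,c)^{(1,-1)}$ is $\{-a,a\}$, which reduces to $\{0\}$ when $a=0$. This takes care of the eigenvalue count in both parts.

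For part (i), where $a=0$, the matrix $M$ has the form $\bigl(\begin{smallmatrix}0 & I\\ 0 & 0\end{smallmatrix}\bigr)$, which is nonzero (as $d\geq 1$ gives $\frac{d+1}{2}\geq 1$) but nilpotent of square zero. Hence $t_0$ is not diagonalizable on $E_d(a,b,c)^{(1,-1)}$, and its unique eigenvalue is $0$.

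For part (ii), where $a\neq 0$, I would exhibit $d+1$ linearly independent eigenvectors. The first $\frac{d+1}{2}$ standard basis vectors are visibly eigenvectors for the eigenvalue $-a$. For the eigenvalue $a$, solving $\bigl(\begin{smallmatrix}-aI & I\\ 0 & aI\end{smallmatrix}\bigr)\bigl(\begin{smallmatrix}u\\v\end{smallmatrix}\bigr)=a\bigl(\begin{smallmatrix}u\\v\end{smallmatrix}\bigr)$ gives the relation $v=2au$, so for each basis vector $e_j$ of the bottom block one obtains the eigenvector $\bigl(\begin{smallmatrix}e_j/(2a)\\ e_j\end{smallmatrix}\bigr)$. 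These $\frac{d+1}{2}$ vectors are linearly independent and together with the previous $\frac{d+1}{2}$ form a basis of $E_d(a,b,c)^{(1,-1)}$, proving diagonalizability.

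There is essentially no obstacle: once Lemma~\ref{lem2:iota_Ed(1,-1)} pins down the explicit block form of $t_0$, the result is an elementary fact about the Jordan structure of the $2\times 2$ block upper triangular matrix $\bigl(\begin{smallmatrix}-aI & I\\ 0 & aI\end{smallmatrix}\bigr)$, with the two cases distinguished by whether the two diagonal blocks have a common eigenvalue (forcing the off-diagonal $I$ to produce a Jordan block) or not (allowing the off-diagonal $I$ to be eliminated by a block row operation).
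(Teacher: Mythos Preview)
Your argument is correct and follows the same approach as the paper, which simply records the result as ``Immediate from Lemma~\ref{lem2:iota_Ed(1,-1)}.'' You have just spelled out in detail the linear algebra behind that word ``immediate.''
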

\begin{proof}
Immediate from Lemma \ref{lem2:iota_Ed(1,-1)}.
\end{proof}

\begin{lem}\label{lem3:iota_Ed(1,-1)}
$E_d(a,b,c)^{(1,-1)}(-a)$ is of dimension $\frac{d+1}{2}$ with the 
$\F$-basis 
\begin{gather*}
v_i
\qquad 
\hbox{for $i=1,3,\ldots,d$}.
\end{gather*}
\end{lem}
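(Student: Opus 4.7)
The plan is to combine the explicit formulas for the $t_1$-action on $E_d(a,b,c)$ with the description of the twist, and then bound the dimension from above using the matrix from Lemma \ref{lem2:iota_Ed(1,-1)}.

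First I would exhibit the claimed eigenvectors. By Table \ref{pm1-action}, the action of $t_0$ on $E_d(a,b,c)^{(1,-1)}$ coincides with the action of $t_1$ on $E_d(a,b,c)$. The second case of \eqref{t1:Ed-1} gives $t_1 v_i = -a v_i$ for every odd $i$ with $1\leq i\leq d$. Hence, viewed inside $E_d(a,b,c)^{(1,-1)}$, each such $v_i$ lies in the $(-a)$-eigenspace of $t_0$, so
$$
\mathrm{span}_{\F}\{v_1,v_3,\ldots,v_d\}\subseteq E_d(a,b,c)^{(1,-1)}(-a).
$$
These vectors are linearly independent because they form a subset of the basis $\{v_i\}_{i=0}^d$ from Proposition \ref{prop:Ed}, and there are exactly $\frac{d+1}{2}$ of them (the odd integers from $1$ to $d$).

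Next I would establish the reverse dimension bound. By Lemma \ref{lem2:iota_Ed(1,-1)}, in an appropriate basis the matrix of $t_0$ on $E_d(a,b,c)^{(1,-1)}$ has the block form
$$
\begin{pmatrix}
-a I_{\frac{d+1}{2}} & \rvline & I_{\frac{d+1}{2}}
\\
\hline
{\bf 0} & \rvline & a I_{\frac{d+1}{2}}
\end{pmatrix}.
$$
From this form, the kernel of $t_0+a\cdot\mathrm{Id}$ consists exactly of the vectors $(u,0)^\top$ with $u\in \F^{\frac{d+1}{2}}$ (this is immediate when $a\neq 0$; when $a=0$ the off-diagonal identity forces the second coordinate of any kernel element to vanish). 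Therefore $\dim E_d(a,b,c)^{(1,-1)}(-a)=\frac{d+1}{2}$, and combined with the inclusion above, equality of subspaces follows.

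There is essentially no obstacle here: once Lemma \ref{lem2:iota_Ed(1,-1)} and the $t_1$-action formulas are in hand, the argument reduces to identifying the odd-indexed basis vectors as $(-a)$-eigenvectors and matching cardinalities. The only mild care needed is in the case $a=0$, where $t_0$ fails to be diagonalizable (Lemma \ref{lem:iota_Ed(1,-1)}(i)) and one must distinguish the true eigenspace from the generalized eigenspace; the block matrix above handles this uniformly.
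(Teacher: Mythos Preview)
Your proof is correct and follows essentially the same route as the paper, which simply says ``Immediate from Lemma~\ref{lem2:iota_Ed(1,-1)}.'' You add the extra step of verifying directly from \eqref{t1:Ed-1} that the odd-indexed $v_i$ are $(-a)$-eigenvectors, whereas the paper implicitly reads off the eigenspace as the span of the first $\frac{d+1}{2}$ basis vectors in Lemma~\ref{lem2:iota_Ed(1,-1)} (namely $v_1$ and the $v_{i+1}-i(d-i+1)v_{i-1}$), which visibly has the same span as $\{v_1,v_3,\ldots,v_d\}$; either way the conclusion is immediate.
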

\begin{proof}
Immediate from Lemma \ref{lem2:iota_Ed(1,-1)}.
\end{proof}

\begin{lem}\label{lem:AB_Ed(1,-1)}
The actions of $A$ and $B$ on the $\H$-module $E_d(a,b,c)^{(1,-1)}$ are as follows:
\begin{align*}
A v_i &=
\left\{
\begin{array}{ll}
\displaystyle
\theta_i v_i -\frac{1}{2} v_{i+1}+\frac{1}{4} v_{i+2}
\qquad 
&\hbox{for $i=0,2,\ldots,d-3$},
\\
\displaystyle
\theta_i v_i +\frac{1}{4} v_{i+2}
\qquad 
&\hbox{for $i=1,3,\ldots,d-2$},
\end{array}
\right.
\\
A v_{d-1}&=\theta_{d-1} v_{d-1} -\frac{1}{2} v_d,
\qquad 
A v_d = \theta_d v_d,
\\
B v_i &=
\left\{
\begin{array}{ll}
\displaystyle
\theta^*_i v_i +\frac{i(d-i+1)}{2} v_{i-1}+\frac{i(d-i+1)}{4}\rho_{i-1} v_{i-2}
\qquad 
&\hbox{for $i=2,4,\ldots,d-1$},
\\
\displaystyle
\theta^*_i v_i+\frac{(i-1)(d-i+2)}{4} \rho_i v_{i-2}
\qquad 
&\hbox{for $i=3,5,\ldots, d$},
\end{array}
\right.
\\
B v_0 &= \theta^*_0 v_0,
\qquad 
B v_1 = \theta^*_1 v_1,
\end{align*}
where 
\begin{align*}
\theta_i
&=
\left(\frac{a}{2}-\frac{d-1}{4}+\left\lceil \frac{i}{2}\right\rceil\right)
\left(\frac{a}{2}-\frac{d+3}{4}+\left\lceil \frac{i}{2}\right\rceil\right)
\qquad (0\leq i\leq d),
\\
\theta_i^*
&=
\left(\frac{b}{2}-\frac{d-3}{4}+\left\lfloor \frac{i}{2}\right\rfloor\right)
\left(\frac{b}{2}-\frac{d+1}{4}+\left\lfloor \frac{i}{2}\right\rfloor\right)
\qquad (0\leq i\leq d).
\end{align*}
\end{lem}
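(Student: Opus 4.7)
The strategy is to reduce both assertions to calculations on the original module $E_d(a,b,c)$ via the twisting automorphism $u^{(1,-1)}$ of Table \ref{pm1-action}. Since $u^{(1,-1)}$ swaps $t_0\leftrightarrow t_1$ and $t_0^\vee\leftrightarrow t_1^\vee$, the image of $A=\frac{(t_1^\vee+t_0^\vee)(t_1^\vee+t_0^\vee+2)}{4}$ under $u^{(1,-1)}$ equals $A$ itself (the expression is symmetric in $t_0^\vee,t_1^\vee$), so the action of $A$ on $E_d(a,b,c)^{(1,-1)}$ coincides with its action on $E_d(a,b,c)$; the $A$-formulas then follow immediately from Lemma \ref{lem:AB_Ed}. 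The image of $B=\frac{(t_1+t_1^\vee)(t_1+t_1^\vee+2)}{4}$ is $B':=\frac{(t_0+t_0^\vee)(t_0+t_0^\vee+2)}{4}$, so what remains is to evaluate $B'$ on the basis $\{v_i\}_{i=0}^d$ of $E_d(a,b,c)$ using Proposition \ref{prop:Ed}.

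The first substantive step is to compute $(t_0+t_0^\vee)v_i$. Inspecting the four cases (even $i\in\{2,4,\ldots,d-1\}$, odd $i\in\{1,3,\ldots,d-2\}$, and the boundary values $i=0$ and $i=d$) one sees, using the key identity
\begin{equation*}
-(\sigma+i)(\tau+i)=c^2-\bigl(a+b-\tfrac{d+1}{2}+i\bigr)^{\!2}=\rho_i,
\end{equation*}
that the $v_{i+1}$-contributions from $t_0$ and $t_0^\vee$ cancel for every odd $i\in\{1,3,\ldots,d-2\}$, leaving
\begin{equation*}
(t_0+t_0^\vee)v_i=\alpha_i v_{i-1}+\beta_i v_i
\end{equation*}
with $\alpha_i=i(d-i+1)$ for even $i$, $\alpha_i=\rho_i$ for odd $i$, and diagonal entries $\beta_i$ that are explicit linear functions of $b,d,i$.

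Applying $(t_0+t_0^\vee)$ a second time yields $B'v_i=\frac{1}{4}\bigl(\alpha_i\alpha_{i-1}v_{i-2}+\alpha_i(\beta_{i-1}+\beta_i+2)v_{i-1}+\beta_i(\beta_i+2)v_i\bigr)$. A direct expansion shows $\alpha_i\alpha_{i-1}/4$ equals $\frac{i(d-i+1)}{4}\rho_{i-1}$ for even $i$ and $\frac{(i-1)(d-i+2)}{4}\rho_i$ for odd $i$, matching the claimed $v_{i-2}$-coefficients. The decisive simplification is that $\beta_{i-1}+\beta_i+2$ evaluates to $2$ when $i$ is even (producing the middle coefficient $i(d-i+1)/2$) and to $0$ when $i$ is odd (including the boundary $i=d$, where one must substitute the different expression for $\beta_d$), explaining why the $v_{i-1}$ term is present for even $i$ and absent for odd $i$. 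Finally, $\beta_i(\beta_i+2)/4$ collapses, after substituting $\lfloor i/2\rfloor$ in place of $i/2$ or $(i-1)/2$ to absorb parity, into the stated closed form for $\theta_i^*$. The proof is then complete once the boundary cases $i\in\{0,1,d-1,d\}$ are checked individually.

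\textbf{Main obstacle.} The computation is elementary but parity-sensitive: no single formula for $(t_0+t_0^\vee)v_i$ covers all $i$, and the same is true for $B'v_i$. The real work is the bookkeeping required to verify that the four diagonal cases of $\beta_i$ combine to give a single uniform $\theta_i^*$ expressible via $\lfloor i/2\rfloor$, and that the cancellation $\beta_{i-1}+\beta_i+2\in\{0,2\}$ holds universally including at $i=d$. Both hinge on the specific choices of $\sigma$ and $\tau$ in Proposition \ref{prop:Ed}; if these were perturbed the tridiagonal form would fail.
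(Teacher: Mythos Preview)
Your proposal is correct and follows the same route as the paper: identify, via Table~\ref{pm1-action}, that $A$ and $B$ on $E_d(a,b,c)^{(1,-1)}$ correspond to $\frac{(t_0^\vee+t_1^\vee)(t_0^\vee+t_1^\vee+2)}{4}$ and $\frac{(t_0+t_0^\vee)(t_0+t_0^\vee+2)}{4}$ on $E_d(a,b,c)$, and then evaluate using Proposition~\ref{prop:Ed}. The paper declares the resulting computation ``routine'' and stops there; you have simply made the routine explicit, including the useful shortcut of invoking Lemma~\ref{lem:AB_Ed} directly for the $A$-part and the parity bookkeeping $\beta_{i-1}+\beta_i+2\in\{0,2\}$ for the $B$-part.
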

\begin{proof}
By Theorem \ref{thm:hom} and Table \ref{pm1-action} the actions of $A$ and $B$ on $E_d(a,b,c)^{(1,-1)}$ correspond to the actions of 
$$
\frac{(t_0^\vee+t_1^\vee)(t_0^\vee+t_1^\vee+2)}{4},
\qquad 
\frac{(t_0+t_0^\vee)(t_0+t_0^\vee+2)}{4}
$$
on $E_d(a,b,c)$, respectively.
Applying Proposition \ref{prop:Ed} it is routine to verify the lemma.
\end{proof}

\begin{lem}\label{lem:AB_V(-a)}
The matrices representing $A$ and $B$ with respect to the $\F$-basis 
\begin{gather}\label{e:basis_V(-a)}
\frac{1}{2^{i-1}} v_i
\qquad 
\hbox{for $i=1,3,\ldots,d$}
\end{gather}
for the $\Re$-module $E_d(a,b,c)^{(1,-1)}(-a)$ are 
$$
\begin{pmatrix}
\theta_0 & & &  &{\bf 0}
\\ 
1 &\theta_1 
\\
&1 &\theta_2
 \\
& &\ddots &\ddots
 \\
{\bf 0} & & &1 &\theta_\frac{d-1}{2}
\end{pmatrix},
\qquad 
\begin{pmatrix}
\theta_0^* &\varphi_1 &  & &{\bf 0}
\\ 
 &\theta_1^* &\varphi_2
\\
 &  &\theta_2^* &\ddots
 \\
 & & &\ddots &\varphi_{\frac{d-1}{2}}
 \\
{\bf 0}  & & & &\theta_\frac{d-1}{2}^*
\end{pmatrix}
$$
respectively, where 
\begin{align*}
\theta_i
&=\frac{(2a-d+4i+1)(2a-d+4i+5)}{16} 
\qquad (0\leq i\leq \textstyle\frac{d-1}{2}),
\\
\theta_i^*
&=\frac{(2b-d+4i-1)(2b-d+4i+3)}{16}
\qquad (0\leq i\leq \textstyle\frac{d-1}{2}),
\\
\varphi_i &=
\frac{i(2i-d-1)(2a+2b+2c-d+4i+1)(2a+2b-2c-d+4i+1)}{32}
\qquad 
(1\leq i\leq \textstyle\frac{d-1}{2}).
\end{align*}
The element $\delta$ acts on the $\Re$-module $E_d(a,b,c)^{(1,-1)}(-a)$ as scalar multiplication by 
\begin{gather}\label{delta_V(-a)}
\frac{(d-1)(d+3)}{16}+\frac{a(a+2)}{4}
+\frac{(b-1)(b+1)}{4}+\frac{(c-1)(c+1)}{4}.
\end{gather}
\end{lem}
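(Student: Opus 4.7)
The proof follows the same template as Lemma \ref{lem:AB_V(-d-1/2)}, with the slight modification that we are now working inside the twisted module $E_d(a,b,c)^{(1,-1)}$ and focusing on the $-a$ eigenspace of (the twisted) $t_0$. The plan is to first confirm the basis, then pull the $A$- and $B$-actions back from Lemma \ref{lem:AB_Ed(1,-1)}, and finally compute $\delta$ by combining Theorem \ref{thm:hom}, Lemma \ref{lem:t^2_Ed}, and Table \ref{pm1-action}.

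\textbf{Step 1.} By Lemma \ref{lem3:iota_Ed(1,-1)} the odd-indexed vectors $v_1,v_3,\ldots,v_d$ form a basis of $E_d(a,b,c)^{(1,-1)}(-a)$, so rescaling by $\frac{1}{2^{i-1}}$ still yields a basis; denote these basis vectors by $w_j=\frac{1}{2^{2j}}v_{2j+1}$ for $j=0,1,\ldots,\frac{d-1}{2}$.

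\textbf{Step 2.} From the odd cases of Lemma \ref{lem:AB_Ed(1,-1)}, $Av_{2j+1}=\theta_{2j+1}v_{2j+1}+\frac{1}{4}v_{2j+3}$ for $j<\frac{d-1}{2}$ and $Av_d=\theta_d v_d$. Pass to the rescaled basis: since $\frac{1}{4}\cdot 2^{2j}=\frac{1}{2^{2j+2}}\cdot 2^{2(j+1)}$, we get $Aw_j=\theta_{2j+1}w_j+w_{j+1}$, which gives exactly the lower-bidiagonal matrix with subdiagonal $1$'s as claimed after verifying that $\theta_{2j+1}$ in Lemma \ref{lem:AB_Ed(1,-1)} simplifies (via $\lceil (2j+1)/2\rceil=j+1$) to $\frac{(2a-d+4j+1)(2a-d+4j+5)}{16}$. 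Similarly the odd cases of the $B$-action in Lemma \ref{lem:AB_Ed(1,-1)} give $Bv_{2j+1}=\theta_{2j+1}^*v_{2j+1}+\frac{2j(d-2j+1)}{4}\rho_{2j+1}v_{2j-1}$ for $j\geq 1$, and after rescaling (the factor $\frac{2^{2j-2}}{2^{2j}}=\frac{1}{4}$ appears) the subdiagonal entry becomes $\frac{j(d-2j+1)}{8}\rho_{2j+1}$. Factor $\rho_{2j+1}=c^2-(a+b+\frac{4j+1-d}{2})^2$ as a difference of squares and collect signs; this produces exactly the stated $\varphi_j$. The value of $\theta_{2j+1}^*$ reduces using $\lfloor (2j+1)/2\rfloor=j$ to match the stated $\theta_j^*$.

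\textbf{Step 3.} For $\delta$, apply Theorem \ref{thm:hom}. On the twisted module $E_d(a,b,c)^{(1,-1)}$, Table \ref{pm1-action} tells us that the scalars by which $t_0^2,t_1^2,t_0^{\vee 2},t_1^{\vee 2}$ act are obtained from Lemma \ref{lem:t^2_Ed} after the swaps $t_0\leftrightarrow t_1$ and $t_0^\vee\leftrightarrow t_1^\vee$; that is, they act as $a^2$, $\frac{(d+1)^2}{4}$, $c^2$, $b^2$ respectively. Since $t_0$ acts as $-a$ on our eigenspace, Theorem \ref{thm:hom} yields that $\delta$ acts as
\[
\frac{a^2+\tfrac{(d+1)^2}{4}+c^2+b^2}{4}-\frac{-a}{2}-\frac{3}{4}.
\]
A short algebraic rearrangement (rewriting the constant as $\frac{(d-1)(d+3)}{16}$ after absorbing $-\tfrac{1}{2}$ into the quadratic terms in $a,b,c$) identifies this with (\ref{delta_V(-a)}).

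The whole argument is bookkeeping: the only potential pitfall is keeping the index translations $i=2j+1$ and the rescaling powers $2^{i-1}$ consistent so that the off-diagonal coefficients of $A$ collapse to $1$ and those of $B$ assemble into the stated $\varphi_j$; everything else is a direct application of previously established formulas.
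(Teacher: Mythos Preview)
Your proof is correct and follows essentially the same approach as the paper: establish the basis via Lemma~\ref{lem3:iota_Ed(1,-1)}, read off the $A$- and $B$-matrices from Lemma~\ref{lem:AB_Ed(1,-1)}, and compute $\delta$ from Theorem~\ref{thm:hom} and Lemma~\ref{lem:t^2_Ed}. The paper's proof is a terse ``a direct calculation yields\ldots'' while you spell out the index translation $i=2j+1$ and the rescaling explicitly; the only minor slip is calling the off-diagonal entry of the $B$-matrix ``subdiagonal'' when it is in fact superdiagonal, but the computation itself is right.
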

\begin{proof}
By Lemma \ref{lem3:iota_Ed(1,-1)} the vectors (\ref{e:basis_V(-a)}) are an $\F$-basis for $E_d(a,b,c)^{(1,-1)}(-a)$. 
Applying Lemma \ref{lem:AB_Ed(1,-1)}  a direct calculation yields the matrices representing $A$ and $B$ with respect to (\ref{e:basis_V(-a)}). Applying Theorem \ref{thm:hom} and Lemma \ref{lem:t^2_Ed} yields that $\delta$ acts on $E_d(a,b,c)^{(1,-1)}(-a)$ as scalar multiplication by (\ref{delta_V(-a)}). The lemma follows.
\end{proof}

\begin{prop}\label{prop:Rmodule_V(-a)}
The $\Re$-module $E_d(a,b,c)^{(1,-1)}(-a)$ is isomorphic to $$
R_{\frac{d-1}{2}}\left(
-\frac{a}{2}-1,
-\frac{b+1}{2},
-\frac{c+1}{2}
\right).
$$
Moreover the $\Re$-module $E_d(a,b,c)^{(1,-1)}(-a)$ is irreducible if the $\H$-module $E_d(a,b,c)^{(1,-1)}$ is irreducible.
\end{prop}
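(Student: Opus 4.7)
The plan is to imitate the template of Proposition~\ref{prop:Rmodule_V(-d-1/2)} essentially verbatim, since Lemma~\ref{lem:AB_V(-a)} has already produced tridiagonal action matrices of $A$ and $B$ on $E_d(a,b,c)^{(1,-1)}(-a)$ that are in exactly the canonical form appearing in Proposition~\ref{prop:Rd}(i). So the proof is a parameter-matching argument followed by an irreducibility check.

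First I would set
$$
(a',b',c',d')=\left(-\tfrac{a}{2}-1,\,-\tfrac{b+1}{2},\,-\tfrac{c+1}{2},\,\tfrac{d-1}{2}\right)
$$
and verify that, with these parameters, the entries $\theta_i,\theta_i^*,\varphi_i$ from Proposition~\ref{prop:Rd}(i) coincide with those computed in Lemma~\ref{lem:AB_V(-a)}. Concretely, for $\theta_i$ one checks
$$
\left(a'+\tfrac{d'}{2}-i\right)\left(a'+\tfrac{d'}{2}-i+1\right)=\frac{(2a-d+4i+1)(2a-d+4i+5)}{16},
$$
and similarly for $\theta_i^*$ and $\varphi_i$. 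Together with Proposition~\ref{prop:Rd}(ii), one then checks that the central scalar
$$
\tfrac{d'}{2}\left(\tfrac{d'}{2}+1\right)+a'(a'+1)+b'(b'+1)+c'(c'+1)
$$
equals (\ref{delta_V(-a)}), which confirms that the $\Re$-action on $E_d(a,b,c)^{(1,-1)}(-a)$ is identical to the $\Re$-action on $R_{d'}(a',b',c')$ under the basis correspondence; since $\Re$ is generated by $A,B,\delta$ by Lemma~\ref{lem:delta}(ii), the two $\Re$-modules are isomorphic.

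For the irreducibility claim, suppose the $\H$-module $E_d(a,b,c)^{(1,-1)}$ is irreducible. Twisting preserves irreducibility, so $E_d(a,b,c)$ itself is irreducible, hence Proposition~\ref{prop:irr_E} applies and yields
$$
a+b+c,\ -a+b+c,\ a-b+c,\ a+b-c\not\in\left\{\tfrac{d-1}{2}-i\,\bigg|\,i=0,2,\ldots,d-1\right\}.
$$
I would then substitute the chosen $(a',b',c',d')$ into the four expressions $a'+b'+c'+1$, $-a'+b'+c'$, $a'-b'+c'$, $a'+b'-c'$ appearing in Proposition~\ref{prop:irr_R} and, after a linear change of variable $i\mapsto 2i$ (or $2i-1$) matching the parity structure, observe that the resulting forbidden set
$$
\left\{\tfrac{d'}{2}-i\,\bigg|\,i=1,2,\ldots,d'\right\}
$$
pulls back to a \emph{subset} of the forbidden set appearing in the $E_d$-irreducibility condition. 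Thus the irreducibility of $E_d(a,b,c)^{(1,-1)}$ forces the hypothesis of Proposition~\ref{prop:irr_R} for $R_{d'}(a',b',c')$ to hold, so $R_{d'}(a',b',c')$ is irreducible.

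The only real obstacle is bookkeeping: the parameter shifts here $(-\tfrac{a}{2}-1$ versus $-\tfrac{a+1}{2}$ in the earlier case) are different, so one must not copy the computation of Proposition~\ref{prop:Rmodule_V(-d-1/2)} blindly but redo the arithmetic for both the matrix entries and the $\delta$-eigenvalue, and then redo the four parity-shifted inclusions for the irreducibility containment. None of these steps involve any new conceptual ingredient beyond the ones already used in \S\ref{s:lattice_Ed}.
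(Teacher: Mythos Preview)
Your proposal is correct and follows essentially the same approach as the paper: set $(a',b',c',d')$ as indicated, compare Lemma~\ref{lem:AB_V(-a)} with Proposition~\ref{prop:Rd} to obtain the isomorphism, and then translate the irreducibility condition from Proposition~\ref{prop:irr_E} into the primed variables to verify the hypothesis of Proposition~\ref{prop:irr_R}. The only cosmetic difference is that the paper records the exact index ranges that arise for each of the four linear combinations (three land in $\{\tfrac{d'}{2}-i\mid i=1,\ldots,d'+1\}$ and $-a'+b'+c'$ lands in $\{\tfrac{d'}{2}-i\mid i=0,\ldots,d'\}$), whereas you assert more loosely that each pullback is a subset of the $E_d$ forbidden set; both are fine.
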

\begin{proof}
Set $(a',b',c',d')=(-\frac{a}{2}-1,
-\frac{b+1}{2},
-\frac{c+1}{2},\frac{d-1}{2})$.
Comparing Proposition \ref{prop:Rd} with Lemma \ref{lem:AB_V(-a)} it follows that the $\Re$-module $E_d(a,b,c)^{(1,-1)}(-a)$ is isomorphic to $R_{d'}(a',b',c')$. Suppose that the $\H$-module $E_d(a,b,c)^{(1,-1)}$ is irreducible. Using Proposition \ref{prop:irr_E} yields that 
$$
a'+b'+c'+1,
a'-b'+c',
a'+b'-c'
\not\in
\left\{
\frac{d'}{2}-i
\,\bigg|
\,
i=1,2,\ldots,d'+1
\right\}
$$
and 
\begin{gather*}
-a'+b'+c'
\not\in
\left\{
\frac{d'}{2}-i
\,\bigg|
\,
i=0,1,\ldots,d'
\right\}.
\end{gather*}
By Proposition \ref{prop:irr_R} the $\Re$-module $R_{d'}(a',b',c')$ is irreducible. The proposition follows.
\end{proof}

\begin{lem}\label{lem:AB_V/V(-a)}
The matrices representing $A$ and $B$ with respect to the $\F$-basis 
\begin{gather}\label{basis_V/V(-a)}
\frac{1}{2^i} v_i+E_d(a,b,c)^{(1,-1)}(-a)
\qquad 
\hbox{for $i=0,2,\ldots,d-1$}
\end{gather}
for the $\Re$-module $E_d(a,b,c)^{(1,-1)}/E_d(a,b,c)^{(1,-1)}(-a)$ are 
$$
\begin{pmatrix}
\theta_0 & & &  &{\bf 0}
\\ 
1 &\theta_1 
\\
&1 &\theta_2 
 \\
& &\ddots &\ddots
 \\
{\bf 0} & & &1 &\theta_\frac{d-1}{2}
\end{pmatrix},
\qquad 
\begin{pmatrix}
\theta_0^* &\varphi_1 &  & &{\bf 0}
\\ 
 &\theta_1^* &\varphi_2
\\
 &  &\theta_2^* &\ddots
 \\
 & & &\ddots &\varphi_{\frac{d-1}{2}}
 \\
{\bf 0}  & & & &\theta_\frac{d-1}{2}^*
\end{pmatrix}
$$
respectively, where 
\begin{align*}
\theta_i
&=\frac{(2a-d+4i-3)(2a-d+4i+1)}{16} 
\qquad (0\leq i\leq \textstyle\frac{d-1}{2}),
\\
\theta_i^*
&=\frac{(2b-d+4i-1)(2b-d+4i+3)}{16}
\qquad (0\leq i\leq \textstyle\frac{d-1}{2}),
\\
\varphi_i &=
\frac{i(2i-d-1)(2a+2b+2c-d+4i-3)(2a+2b-2c-d+4i-3)}{32}
\qquad 
(1\leq i\leq \textstyle\frac{d-1}{2}).
\end{align*}
The element $\delta$ acts on the $\Re$-module $E_d(a,b,c)^{(1,-1)}/E_d(a,b,c)^{(1,-1)}(-a)$ as scalar multiplication by
\begin{gather}\label{delta_V/V(-a)}
\frac{(d-1)(d+3)}{16}+\frac{a(a-2)}{4}
+\frac{(b-1)(b+1)}{4}+\frac{(c-1)(c+1)}{4}.
\end{gather}
\end{lem}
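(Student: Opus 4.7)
The plan is to mirror the proof strategy used for Lemmas \ref{lem:AB_V(-d-1/2)} and \ref{lem:AB_V/V(-d-1/2)}: verify the proposed spanning set is a basis of the quotient, push the action formulas from Lemma \ref{lem:AB_Ed(1,-1)} through the quotient, and then compute the scalar by which $\delta$ acts using Theorem \ref{thm:hom}.

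First I would invoke Lemma \ref{lem3:iota_Ed(1,-1)}, which identifies $E_d(a,b,c)^{(1,-1)}(-a)$ as the span of the odd-indexed vectors $v_i$, $i = 1, 3, \ldots, d$. Since $\{v_i\}_{i=0}^d$ is an $\F$-basis for the full module, the cosets of the even-indexed vectors $v_0, v_2, \ldots, v_{d-1}$ (equivalently, the cosets in (\ref{basis_V/V(-a)}) after rescaling by $1/2^i$) form an $\F$-basis for the quotient of dimension $(d+1)/2$, which matches the size of the matrices claimed.

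Next I would apply Lemma \ref{lem:AB_Ed(1,-1)} to compute $A v_i$ and $B v_i$ for $i$ even. In the formula for $A v_i$ with $i\in\{0,2,\ldots,d-3\}$, the middle term $-\tfrac{1}{2}v_{i+1}$ vanishes in the quotient because $i+1$ is odd, leaving $\theta_i v_i + \tfrac14 v_{i+2}$; rescaling by $1/2^i$ converts the coefficient $1/4$ into the subdiagonal entry $1$ relative to the basis element $\tfrac{1}{2^{i+2}}v_{i+2}$. For $i = d-1$ the tail term likewise vanishes, giving the purely diagonal last column. A matching computation for $B$ uses that the linear-in-$v_{i-1}$ term of $B v_i$ dies in the quotient, leaving $\theta_i^* v_i + \tfrac{i(d-i+1)}{4}\rho_{i-1} v_{i-2}$; rescaling and re-indexing with $j = i/2$ converts the coefficient of $v_{i-2}$ into $\varphi_j$ after factoring $\rho_{i-1}$ as a product of two linear expressions in $a, b, c, d, j$. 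Checking that the reindexed $\theta_j$, $\theta_j^*$, $\varphi_j$ agree with the stated formulas is a routine identification of polynomials (in particular using $j(2j-d-1) = -j(d-2j+1)$ to reconcile signs in $\varphi_j$).

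Finally, by Lemma \ref{lem2:iota_Ed(1,-1)} together with the fact that $t_0$ acts on $E_d(a,b,c)^{(1,-1)}$ as $t_1$ does on $E_d(a,b,c)$, the element $t_0$ acts on the quotient as scalar multiplication by $+a$. Using Theorem \ref{thm:hom}, which expresses $\delta$ as an $\F$-linear combination of $1$, $t_0$, and $t_0^2, t_1^2, t_0^{\vee 2}, t_1^{\vee 2}$, and using Lemma \ref{lem:t^2_Ed} together with Table \ref{pm1-action} (which gives $t_0^2 \mapsto a^2$, $t_1^2 \mapsto (d+1)^2/4$, $t_0^{\vee 2} \mapsto c^2$, $t_1^{\vee 2} \mapsto b^2$ on the twisted module), one obtains
\[
\delta = \frac{a^2 + (d+1)^2/4 + b^2 + c^2}{4} - \frac{a}{2} - \frac{3}{4},
\]
which simplifies to the expression (\ref{delta_V/V(-a)}).

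There is no serious obstacle: every step is a bookkeeping exercise, with the only mild subtlety being that one must keep track of the $1/2^i$ rescalings and the sign conventions when factoring $\rho_{i-1}$ so that the subdiagonal $A$-entries become exactly $1$ and the $\varphi_j$ in $B$ come out with the stated form. The parallel structure with the proofs of Lemmas \ref{lem:AB_V(-d-1/2)} and \ref{lem:AB_V/V(-d-1/2)} makes the whole argument essentially mechanical once the basis is in hand.
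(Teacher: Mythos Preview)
Your proposal is correct and follows essentially the same approach as the paper's own proof: invoke Lemma~\ref{lem3:iota_Ed(1,-1)} for the basis, Lemma~\ref{lem:AB_Ed(1,-1)} for the $A,B$ actions, and then Lemma~\ref{lem2:iota_Ed(1,-1)} together with Theorem~\ref{thm:hom} and Lemma~\ref{lem:t^2_Ed} for the $\delta$-scalar. You have simply spelled out in more detail the ``direct calculation'' that the paper leaves to the reader.
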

\begin{proof}
By Lemma \ref{lem3:iota_Ed(1,-1)} the cosets (\ref{basis_V/V(-a)}) are an $\F$-basis for $E_d(a,b,c)^{(1,-1)}/E_d(a,b,c)^{(1,-1)}(-a)$.
Applying Lemma \ref{lem:AB_Ed(1,-1)} a direct calculation yields the matrices representing $A$ and $B$ with respect to (\ref{basis_V/V(-a)}). 
By Lemma \ref{lem2:iota_Ed(1,-1)} the element $t_0$ acts on $E_d(a,b,c)^{(1,-1)}/E_d(a,b,c)^{(1,-1)}(-a)$ as scalar multiplication by $a$. Combined with Theorem \ref{thm:hom} and Lemma \ref{lem:t^2_Ed}, the element $\delta$ acts on $E_d(a,b,c)^{(1,-1)}/E_d(a,b,c)^{(1,-1)}(-a)$ as scalar multiplication by (\ref{delta_V/V(-a)}). The lemma follows.
\end{proof}

\begin{prop}\label{prop:Rmodule_V/V(-a)}
The $\Re$-module $E_d(a,b,c)^{(1,-1)}/E_d(a,b,c)^{(1,-1)}(-a)$ is isomorphic to $$
R_{\frac{d-1}{2}}\left(
-\frac{a}{2},
-\frac{b+1}{2},
-\frac{c+1}{2}
\right).
$$
Moreover the $\Re$-module 
$
E_d(a,b,c)^{(1,-1)}/ E_d(a,b,c)^{(1,-1)}(-a)
$ 
is irreducible provided that the $\H$-module 
$E_d(a,b,c)^{(1,-1)}$ is irreducible.
\end{prop}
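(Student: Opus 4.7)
The plan is to follow verbatim the template already established by Propositions~\ref{prop:Rmodule_V(-d-1/2)}, \ref{prop:Rmodule_V/V(-d-1/2)}, and \ref{prop:Rmodule_V(-a)}. Set $(a',b',c',d') = \bigl(-\tfrac{a}{2},\, -\tfrac{b+1}{2},\, -\tfrac{c+1}{2},\, \tfrac{d-1}{2}\bigr)$. The isomorphism claim then amounts to matching the explicit tridiagonal matrices for $A$ and $B$ given in Lemma~\ref{lem:AB_V/V(-a)} against those prescribed for $R_{d'}(a',b',c')$ by Proposition~\ref{prop:Rd}(i). A routine substitution of the primed parameters into the defining formulas of Proposition~\ref{prop:Rd}(i) should reproduce the scalars $\theta_i$, $\theta_i^*$, and $\varphi_i$ listed in Lemma~\ref{lem:AB_V/V(-a)}. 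As an auxiliary consistency check, evaluating Proposition~\ref{prop:Rd}(ii) at $(a',b',c',d')$ should yield exactly the scalar $(\ref{delta_V/V(-a)})$ by which $\delta$ acts on the quotient.

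For irreducibility, I would assume $E_d(a,b,c)^{(1,-1)}$ is irreducible as an $\H$-module and invoke the criterion of Proposition~\ref{prop:irr_E} (interpreted through the twist $(1,-1)$ of Table~\ref{pm1-action}). These conditions on $a,b,c$ translate directly into non-membership statements for the four quantities $a'+b'+c'+1$, $-a'+b'+c'$, $a'-b'+c'$, $a'+b'-c'$ with respect to an arithmetic progression of the form $\{\tfrac{d'}{2} - i\}$. Proposition~\ref{prop:irr_R} then delivers the irreducibility of $R_{d'}(a',b',c')$, and together with the isomorphism from the first step, this completes the proof.

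The main obstacle, though entirely technical, is the bookkeeping of the half-integer shift in the parameter $a'$ when moving from the submodule case (where $a' = -\tfrac{a}{2}-1$ in Proposition~\ref{prop:Rmodule_V(-a)}) to the quotient case at hand (where $a' = -\tfrac{a}{2}$). This shift moves each of the four linear combinations $\pm a' \pm b' \pm c'$ by a half-integer and therefore changes the index range that must be excluded. Precisely the analogous shift arose between Propositions~\ref{prop:Rmodule_V(-d-1/2)} and~\ref{prop:Rmodule_V/V(-d-1/2)}, where it was handled by enlarging the range of $i$ from $\{1,\ldots,d'\}$ to $\{0,1,\ldots,d'+1\}$. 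The same widening of the excluded index set, combined with the parity restriction built into the hypothesis of Proposition~\ref{prop:irr_E}, should make the four non-membership statements align with the hypothesis of Proposition~\ref{prop:irr_R} and close the argument.
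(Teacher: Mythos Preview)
Your plan is correct and matches the paper's proof essentially verbatim: the paper sets $(a',b',c',d')=(-\tfrac{a}{2},-\tfrac{b+1}{2},-\tfrac{c+1}{2},\tfrac{d-1}{2})$, invokes Lemma~\ref{lem:AB_V/V(-a)} against Proposition~\ref{prop:Rd} for the isomorphism, and then translates the irreducibility hypothesis via Proposition~\ref{prop:irr_E} into non-membership conditions that feed into Proposition~\ref{prop:irr_R}. One small correction to your final paragraph: the asymmetry in the enlarged index ranges here is the mirror image of that in Proposition~\ref{prop:Rmodule_V(-a)} --- the paper excludes $\{0,1,\ldots,d'\}$ for $a'+b'+c'+1$, $a'-b'+c'$, $a'+b'-c'$ and $\{1,2,\ldots,d'+1\}$ for $-a'+b'+c'$, exactly swapped from the submodule case --- but you would discover this immediately upon carrying out the substitution.
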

\begin{proof}
Let $(a',b',c',d')=(-\frac{a}{2},
-\frac{b+1}{2},
-\frac{c+1}{2},
\frac{d-1}{2})$. 
Comparing Proposition \ref{prop:Rd} with Lemma \ref{lem:AB_V/V(-a)} yields that the quotient $\Re$-module $E_d(a,b,c)^{(1,-1)}/E_d(a,b,c)^{(1,-1)}(-a)$ is isomorphic to $R_{d'}(a',b',c')$. Suppose that the $\H$-module $E_d(a,b,c)^{(1,-1)}$ is irreducible.
Using Proposition \ref{prop:irr_E} yields that 
$$
a'+b'+c'+1',a'-b'+c',a'+b'-c'\not\in
\left\{
\frac{d'}{2}-i\,\bigg|\, i=0,1,\ldots,d'
\right\}
$$
and 
$$
-a'+b'+c'\not\in
\left\{
\frac{d'}{2}-i\,\bigg|\, i=1,2,\ldots,d'+1
\right\}.
$$
By Proposition \ref{prop:irr_R} the $\Re$-module $R_{d'}(a',b',c')$ is irreducible. The proposition follows.
\end{proof}

\begin{thm}\label{thm:Ed(1,-1)}
Assume that the $\H$-module $E_d(a,b,c)^{(1,-1)}$ is irreducible. Then the following hold:
\begin{enumerate}
\item If $a=0$ then 
\begin{table}[H]
\begin{tikzpicture}[node distance=1.2cm]
 \node (0)                  {$\{0\}$};
 \node (E)  [above of=0]   {$E_d(a,b,c)^{(1,-1)}(0)$};
 \node (V)  [above of=E]   {$E_d(a,b,c)^{(1,-1)}$};
 \draw (0)   -- (E);
 \draw (E)  -- (V);
\end{tikzpicture}
\end{table}
\noindent is the lattice of $\Re$-submodules of $E_d(a,b,c)^{(1,-1)}$.

\item If $a\not=0$ then 
 \begin{table}[H]
\begin{tikzpicture}[node distance=1.2cm]
 \node (0)                  {$\{0\}$};
 \node (1)  [above of=0]   {};
 \node (2)  [right of=1]   {};
 \node (3)  [left of=1]   {};
 \node (E1)  [right of=2]  {$E_d(a,b,c)^{(1,-1)}(a)$};
 \node (E2)  [left of=3]   {$E_d(a,b,c)^{(1,-1)}(-a)$};
 \node (V) [above of=1]  {$E_d(a,b,c)^{(1,-1)}$};
 \draw (0)   -- (E1);
 \draw (0)   -- (E2);
 \draw (E1)   -- (V);
 \draw (E2)  -- (V);
\end{tikzpicture}
\end{table}
\noindent is the lattice of $\Re$-submodules of $E_d(a,b,c)^{(1,-1)}$.
\end{enumerate}
\end{thm}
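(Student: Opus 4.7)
The plan is to mirror the strategy of Theorem \ref{thm:Ed}, using the eigenspace decomposition of $t_0$ on $E_d(a,b,c)^{(1,-1)}$ together with Propositions \ref{prop:Rmodule_V(-a)} and \ref{prop:Rmodule_V/V(-a)}, and then invoking Proposition \ref{prop:irr_Rmodule_in_t0eigenspace} to rule out stray $\Re$-submodules. Write $V=E_d(a,b,c)^{(1,-1)}$ throughout.

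First I would treat case (i). By Lemma \ref{lem:iota_Ed(1,-1)}(i), when $a=0$ the operator $t_0$ is nondiagonalizable on $V$ with $0$ as its sole eigenvalue, and $V(0)$ is a proper nonzero subspace of $V$ by Lemma \ref{lem3:iota_Ed(1,-1)}. Proposition \ref{prop:t0eigenspace=Rmodule} promotes $V(0)$ to an $\Re$-submodule, while Propositions \ref{prop:Rmodule_V(-a)} and \ref{prop:Rmodule_V/V(-a)} (specialized to $a=0$) show that both $V(0)$ and $V/V(0)$ are irreducible as $\Re$-modules. Hence
$$
\{0\}\subset V(0)\subset V
$$
is a composition series for $V$. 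To conclude that this is the entire lattice I would argue that any nonzero $\Re$-submodule $W$ must contain an irreducible $\Re$-submodule, which by Proposition \ref{prop:irr_Rmodule_in_t0eigenspace} lives inside some $V(\theta)$; since $0$ is the only eigenvalue of $t_0$, this forces the irreducible submodule into $V(0)$, and by irreducibility of $V(0)$ it must equal $V(0)$. Thus $V(0)\subseteq W$, and irreducibility of $V/V(0)$ forces $W\in\{V(0),V\}$.

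Next I would treat case (ii). When $a\neq 0$, Lemma \ref{lem:iota_Ed(1,-1)}(ii) tells us $t_0$ is diagonalizable with eigenvalues $\pm a$, so $V=V(a)\oplus V(-a)$, and both summands are proper nonzero $\Re$-submodules of $V$ by Proposition \ref{prop:t0eigenspace=Rmodule}. Propositions \ref{prop:Rmodule_V(-a)} and \ref{prop:Rmodule_V/V(-a)} yield that $V(-a)$ and $V/V(-a)$ are irreducible $\Re$-modules, which in turn (via the isomorphism $V(a)\cong V/V(-a)$ induced by the projection, or by the symmetric roles of $\pm a$) also gives irreducibility of $V(a)$. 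So both
$$
\{0\}\subset V(-a)\subset V
\qquad\text{and}\qquad
\{0\}\subset V(a)\subset V
$$
are composition series. By Proposition \ref{prop:irr_Rmodule_in_t0eigenspace}, every irreducible $\Re$-submodule of $V$ is contained in $V(a)$ or $V(-a)$, and by irreducibility of these eigenspaces the only irreducible $\Re$-submodules are $V(\pm a)$ themselves. Jordan--H\"older then forces the displayed chains to be the only composition series, so the lattice of $\Re$-submodules is exactly the four-element diamond shown.

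The only mildly subtle point will be verifying that $V(a)$ and $V(-a)$ are \emph{both} irreducible in case (ii): Propositions \ref{prop:Rmodule_V(-a)} and \ref{prop:Rmodule_V/V(-a)} directly address $V(-a)$ and $V/V(-a)$, and one must observe that the direct sum decomposition $V=V(a)\oplus V(-a)$ identifies $V(a)$ with the quotient $V/V(-a)$ as $\Re$-modules (this uses Lemma \ref{lem:t0_centralizer} to guarantee the projection is $\Re$-linear). Once that identification is in hand, the two composition series above are symmetric and the lattice is pinned down by Jordan--H\"older and Proposition \ref{prop:irr_Rmodule_in_t0eigenspace}.
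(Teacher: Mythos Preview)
Your proposal is correct and follows essentially the same route as the paper: build a composition series from Propositions \ref{prop:Rmodule_V(-a)} and \ref{prop:Rmodule_V/V(-a)}, then use Proposition \ref{prop:irr_Rmodule_in_t0eigenspace} together with Lemma \ref{lem:iota_Ed(1,-1)} to rule out any other irreducible $\Re$-submodules. The only cosmetic difference is in part (ii): the paper (via the template of Theorem \ref{thm:Ed}(ii)) deduces that $\{0\}\subset V(a)\subset V$ is a composition series directly from Jordan--H\"older once $V(a)$ is known to be a nonzero proper $\Re$-submodule, whereas you obtain the irreducibility of $V(a)$ by identifying it with $V/V(-a)$ through the $\Re$-linear projection coming from the eigenspace decomposition. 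Both arguments are valid and equally short.
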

\begin{proof}

(i): Suppose that $a=0$. Combining Propositions \ref{prop:Rmodule_V(-a)} and \ref{prop:Rmodule_V/V(-a)} yields that 
\begin{gather}\label{cs1:Ed(1,-1)}
\{0\}\subset 
E_d(a,b,c)^{(1,-1)}(0)
\subset 
E_d(a,b,c)^{(1,-1)}
\end{gather}
is a composition series for the $\Re$-module $E_d(a,b,c)^{(1,-1)}$. By Proposition \ref{prop:irr_Rmodule_in_t0eigenspace} and Lemma \ref{lem:iota_Ed(1,-1)}(i) every irreducible $\Re$-submodule of $E_d(a,b,c)^{(1,-1)}$ is contained in $E_d(a,b,c)^{(1,-1)}(0)$. Hence (\ref{cs1:Ed(1,-1)}) is the unique composition series for the $\Re$-module $E_d(a,b,c)^{(1,-1)}$. Therefore (i) follows.

(ii): Similar to the proof of Theorem \ref{thm:Ed}(ii). 
\end{proof}

\subsection{The lattice of $\Re$-submodules of $E_d(a,b,c)^{(-1,1)}$}\label{s:lattice_Ed(-1,1)}

\begin{lem}\label{lem2:iota_Ed(-1,1)}
Assume that the $\H$-module $E_d(a,b,c)^{(-1,1)}$ is irreducible. 
Then 
\begin{equation}\label{basis:Ed(-1,1)}
\textstyle 
\rho_{i-1} v_{i-2}-v_i
\quad 
\hbox{for $i=2,4,\ldots,d-1$},
\quad 
\rho_d v_{d-1},
\quad 
v_i 
\quad 
\hbox{for $i=1,3,\ldots,d$}
\end{equation}
form an $\F$-basis for $E_d(a,b,c)^{(-1,1)}$.
The matrix representing $t_0$ with respect to the $\F$-basis {\rm (\ref{basis:Ed(-1,1)})} for $E_d(a,b,c)^{(-1,1)}$ is 
\begin{gather*}
\begin{pmatrix}
bI_{\frac{d+1}{2}} & \rvline &I_{\frac{d+1}{2}}
\\
\hline
{\bf 0} & \rvline & -b I_{\frac{d+1}{2}}
\end{pmatrix}.
\end{gather*}
\end{lem}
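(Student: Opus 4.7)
The plan is to proceed in the same template as Lemmas \ref{lem2:iota_Ed} and \ref{lem2:iota_Ed(1,-1)}. By Table \ref{pm1-action} the action of $t_0$ on $E_d(a,b,c)^{(-1,1)}$ coincides with the action of $t_0^\vee$ on $E_d(a,b,c)$, which is explicit from (\ref{t0vee:Ed-1}) and (\ref{t0vee:Ed-2}). A preparatory identity I will use throughout is $(\sigma+i)(\tau+i)=-\rho_i$ for $i=1,3,\ldots,d$, obtained by expanding $\rho_i=c^2-(a+b-\frac{d+1}{2}+i)^2$ as a difference of squares and recognizing the two factors as $\sigma+i$ and $\tau+i$ up to sign. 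This converts the coefficients appearing in (\ref{t0vee:Ed-1}) and (\ref{t0vee:Ed-2}) into the quantities $\rho_i$ that enter the statement.

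For the basis claim, I count $(d-1)/2+1+(d+1)/2=d+1$ vectors, matching $\dim E_d(a,b,c)^{(-1,1)}$, so only linear independence is at issue. The vectors $v_i$ for $i=1,3,\ldots,d$ are clearly independent. The remaining $(d+1)/2$ vectors lie in the span of $v_0,v_2,\ldots,v_{d-1}$, and with respect to this latter basis their coefficient matrix is upper bidiagonal with diagonal entries $\rho_1,\rho_3,\ldots,\rho_d$. Thus independence reduces to showing $\rho_i\neq 0$ for $i=1,3,\ldots,d$.

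The non-vanishing of these $\rho_i$ is the only step that genuinely requires the hypothesis, and it is the main obstacle of the proof. Since twisting by an $\F$-algebra automorphism of $\H$ preserves irreducibility, the hypothesis is equivalent to the irreducibility of $E_d(a,b,c)$ itself. Proposition \ref{prop:irr_E} then forces both $a+b+c$ and $a+b-c$ to avoid the set
$$\left\{\frac{d-1}{2}-j\,\bigg|\,j=0,2,\ldots,d-1\right\}=\left\{\frac{d+1}{2}-i\,\bigg|\,i=1,3,\ldots,d\right\}.$$
Factoring $\rho_i=(c-(a+b-\frac{d+1}{2}+i))(c+(a+b-\frac{d+1}{2}+i))$ shows that $\rho_i=0$ would force $a+b-c=\frac{d+1}{2}-i$ or $a+b+c=\frac{d+1}{2}-i$, which is precisely excluded.

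To finish, I compute the matrix of $t_0$. Since $t_0^\vee v_j=b v_j$ for even $j\in\{0,2,\ldots,d-1\}$ by (\ref{t0vee:Ed-1}), each of the first $(d+1)/2$ listed basis vectors is a $b$-eigenvector, yielding the upper-left block $bI_{(d+1)/2}$ and the zeros below it. For odd $i\in\{1,3,\ldots,d-2\}$, combining (\ref{t0vee:Ed-1}) with $(\sigma+i)(\tau+i)=-\rho_i$ gives
$$t_0 v_i=\rho_i v_{i-1}-b v_i-v_{i+1}=(\rho_i v_{i-1}-v_{i+1})-b v_i,$$
and $\rho_i v_{i-1}-v_{i+1}$ is exactly the listed basis vector indexed by $i+1$; similarly (\ref{t0vee:Ed-2}) yields $t_0 v_d=\rho_d v_{d-1}-b v_d$. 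Pairing $v_1,v_3,\ldots,v_d$ in the second half of the basis with the first half in the listed order then produces the identity block $I_{(d+1)/2}$ in the upper right and $-bI_{(d+1)/2}$ in the lower right, completing the claim.
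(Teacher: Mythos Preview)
Your proof is correct and follows essentially the same approach as the paper: both use Proposition~\ref{prop:irr_E} to deduce $\rho_i\neq 0$ for $i=1,3,\ldots,d$ (yielding the basis claim), identify the $t_0$-action on the twist with the $t_0^\vee$-action on $E_d(a,b,c)$ via Table~\ref{pm1-action}, and verify the matrix from (\ref{t0vee:Ed-1}) and (\ref{t0vee:Ed-2}). Your write-up simply makes explicit the identity $(\sigma+i)(\tau+i)=-\rho_i$ and the column-by-column verification that the paper leaves as ``routine.''
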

\begin{proof}
It follows from Proposition \ref{prop:irr_E} that $\rho_i\not=0$ for all $i=1,3,\ldots,d$. Hence (\ref{basis:Ed(-1,1)}) is an $\F$-basis for $E_d(a,b,c)^{(-1,1)}$. 
By Table \ref{pm1-action} the action of $t_0$ on $E_d(a,b,c)^{(-1,1)}$ corresponds to the action of $t_0^\vee$ on $E_d(a,b,c)$.  
Using (\ref{t0vee:Ed-1}) and (\ref{t0vee:Ed-2}) it is routine to verify the lemma.
\end{proof}

\begin{lem}\label{lem:iota_Ed(-1,1)}
Assume that the $\H$-module $E_d(a,b,c)^{(-1,1)}$ is irreducible. Then the following hold:
\begin{enumerate}
\item If $b=0$ then $t_0$ is not diagonalizable on $E_d(a,b,c)^{(-1,1)}$ with exactly one eigenvalue $0$.

\item If $b\not=0$ then $t_0$ is diagonalizable on $E_d(a,b,c)^{(-1,1)}$ with exactly two eigenvalues $\pm b$. 
\end{enumerate}
\end{lem}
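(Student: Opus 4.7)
The plan is to reduce the question entirely to an analysis of the $2\times 2$ block matrix displayed in Lemma \ref{lem2:iota_Ed(-1,1)}. That lemma already exhibits an $\F$-basis for $E_d(a,b,c)^{(-1,1)}$ with respect to which $t_0$ is represented by
\[
T=\begin{pmatrix} bI_{\frac{d+1}{2}} & I_{\frac{d+1}{2}} \\ {\bf 0} & -bI_{\frac{d+1}{2}} \end{pmatrix},
\]
so diagonalizability and the eigenvalue set of $t_0$ can be read off directly from $T$.

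First I would compute $T^2$ by block multiplication: the $(1,2)$-block becomes $bI-bI=0$, while the diagonal blocks both yield $b^2I$, so $T^2=b^2 I_{d+1}$. Hence $T$ annihilates the polynomial $x^2-b^2=(x-b)(x+b)$, and the block-triangular form shows that the characteristic polynomial of $T$ is $(x-b)^{(d+1)/2}(x+b)^{(d+1)/2}$.

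For (ii), when $b\neq 0$ the polynomial $(x-b)(x+b)$ has distinct simple roots, so $T$ (and hence $t_0$) is diagonalizable with eigenvalues exactly $\pm b$, each of multiplicity $\frac{d+1}{2}$. For (i), when $b=0$ we have $T^2=0$ but $T\neq 0$ (the off-diagonal identity block is nonzero), so $T$ is a nonzero nilpotent matrix; its only eigenvalue is $0$ and it is not diagonalizable.

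There is no real obstacle here: the entire content of the lemma is the matrix identity $T^2=b^2 I$ together with the observation $T\neq 0$ in the degenerate case, both of which are one-line block-matrix calculations. The substantive work has already been carried out in Lemma \ref{lem2:iota_Ed(-1,1)} (which in turn used the irreducibility hypothesis via Proposition \ref{prop:irr_E} to guarantee $\rho_i\neq 0$ and hence the basis is well defined); the present lemma is then simply a linear-algebra consequence.
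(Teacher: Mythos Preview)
Your argument is correct and follows the same route as the paper, which simply records the lemma as ``Immediate from Lemma~\ref{lem2:iota_Ed(-1,1)}.'' You have merely spelled out the linear-algebra verification (that $T^2=b^2I$ and $T\neq 0$ when $b=0$) that the paper leaves implicit.
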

\begin{proof}
Immediate from Lemma \ref{lem2:iota_Ed(-1,1)}.
\end{proof}

\begin{lem}\label{lem3:iota_Ed(-1,1)}
If the $\H$-module $E_d(a,b,c)^{(-1,1)}$ is irreducible then 
$E_d(a,b,c)^{(-1,1)}(b)$ is of dimension $\frac{d+1}{2}$ with the $\F$-basis 
\begin{gather*}
v_i
\qquad 
\hbox{for $i=0,2,\ldots,d-1$}.
\end{gather*}
\end{lem}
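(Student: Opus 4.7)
The plan is to reduce the claim entirely to the structural description of $t_0$ already established in Lemma \ref{lem2:iota_Ed(-1,1)}, combined with a direct verification that the listed even-indexed basis vectors are $b$-eigenvectors.

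First, I would exhibit the $\frac{d+1}{2}$ vectors $v_0, v_2, \ldots, v_{d-1}$ as elements of $E_d(a,b,c)^{(-1,1)}(b)$. By Table \ref{pm1-action}, the action of $t_0$ on $E_d(a,b,c)^{(-1,1)}$ coincides with the action of $t_0^\vee$ on $E_d(a,b,c)$. Equation (\ref{t0vee:Ed-1}) of Proposition \ref{prop:Ed} states $t_0^\vee v_i = b v_i$ for $i=0,2,\ldots,d-1$, so indeed each such $v_i$ satisfies $t_0 v_i = b v_i$ on $E_d(a,b,c)^{(-1,1)}$. These vectors are part of the original $\F$-basis $\{v_i\}_{i=0}^d$, hence linearly independent, giving the lower bound $\dim E_d(a,b,c)^{(-1,1)}(b) \geq \frac{d+1}{2}$.

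For the upper bound I would appeal to Lemma \ref{lem2:iota_Ed(-1,1)}: with respect to the basis (\ref{basis:Ed(-1,1)}), the matrix of $t_0$ has the block form
$$
M = \begin{pmatrix} bI_{\frac{d+1}{2}} & I_{\frac{d+1}{2}} \\ 0 & -bI_{\frac{d+1}{2}} \end{pmatrix}.
$$
A vector $(x,y)^\top$ lies in $\ker(M - bI_{d+1})$ iff $y=0$ and $-2by=0$, which in either case $b=0$ or $b\neq 0$ forces $y=0$ with $x$ free. Thus $\dim \ker(M - bI_{d+1}) = \frac{d+1}{2}$, so $\dim E_d(a,b,c)^{(-1,1)}(b) \leq \frac{d+1}{2}$. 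Combined with Step 1, equality holds, and the $(d+1)/2$ vectors $v_0, v_2, \ldots, v_{d-1}$ form an $\F$-basis for $E_d(a,b,c)^{(-1,1)}(b)$.

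There is essentially no obstacle here; the only mild subtlety is handling the $b=0$ case uniformly, which is resolved by noting that the kernel computation for $M - bI_{d+1}$ yields the same answer regardless of whether $b$ vanishes. The irreducibility hypothesis on $E_d(a,b,c)^{(-1,1)}$ is not needed for this lemma per se but is inherited so that Lemma \ref{lem2:iota_Ed(-1,1)} (which uses $\rho_i \neq 0$) is applicable.
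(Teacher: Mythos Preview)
Your proof is correct and follows essentially the same approach as the paper, which simply says ``Immediate from Lemma \ref{lem2:iota_Ed(-1,1)}.'' Your extra Step 1 (verifying directly from (\ref{t0vee:Ed-1}) that the even-indexed $v_i$ are $b$-eigenvectors) is a helpful elaboration that spares the reader from having to argue that the span of the first $\frac{d+1}{2}$ vectors in (\ref{basis:Ed(-1,1)}) coincides with the span of $v_0,v_2,\ldots,v_{d-1}$, and your remark about where irreducibility actually enters (via $\rho_i\neq 0$ in Lemma \ref{lem2:iota_Ed(-1,1)}) is on point.
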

\begin{proof}
Immediate from Lemma \ref{lem2:iota_Ed(-1,1)}.
\end{proof}

\begin{lem}\label{lem:AB_Ed(-1,1)}
The actions of $A$ and $B$ on the $\H$-module $E_d(a,b,c)^{(-1,1)}$ are as follows:
\begin{align*}
A v_i &=
\left\{
\begin{array}{ll}
\displaystyle
\theta_i v_{i}+\frac{1}{4} v_{i+2} 
\qquad 
&\hbox{for $i=0,2,\ldots,d-3$},
\\
\displaystyle
\theta_i v_i+\frac{1}{2}v_{i+1}+\frac{1}{4} v_{i+2}
\qquad 
&\hbox{for $i=1,3,\ldots,d-2$},
\end{array}
\right.
\\
A v_{d-1}&=\theta_{d-1} v_{d-1},
\qquad 
A v_d=\theta_d v_d,
\\
B v_i &=
\left\{
\begin{array}{ll}
\displaystyle
\theta^*_i v_i +\frac{i(d-i+1)}{4}\rho_{i-1} v_{i-2}
\qquad 
&\hbox{for $i=2,4,\ldots,d-1$},
\\
\displaystyle
\theta^*_i v_i -\frac{\rho_i}{2} v_{i-1}+\frac{(i-1)(d-i+2)}{4}\rho_i v_{i-2}
\qquad 
&\hbox{for $i=3,5,\ldots,d$},
\end{array}
\right.
\\
B v_0 &=
\theta^*_0 v_0,
\qquad 
B v_1 =
\theta^*_1 v_1 -\frac{\rho_1}{2} v_0,
\end{align*}
where 
\begin{align*}
\theta_i
&=
\left(\frac{a}{2}-\frac{d-3}{4}+\left\lfloor \frac{i}{2}\right\rfloor\right)
\left(\frac{a}{2}-\frac{d+1}{4}+\left\lfloor \frac{i}{2}\right\rfloor\right)
\qquad (0\leq i\leq d),
\\
\theta_i^*
&=
\left(\frac{b}{2}-\frac{d-1}{4}+\left\lceil \frac{i}{2}\right\rceil\right)
\left(\frac{b}{2}-\frac{d+3}{4}+\left\lceil \frac{i}{2}\right\rceil\right)
\qquad (0\leq i\leq d).
\end{align*}
\end{lem}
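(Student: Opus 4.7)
My approach mirrors the strategy already used for the twisted module $E_d(a,b,c)^{(1,-1)}$ in Lemma \ref{lem:AB_Ed(1,-1)}. By Theorem \ref{thm:hom} the generators $A$ and $B$ are concrete polynomials in $t_0^\vee + t_1^\vee$ and $t_1 + t_1^\vee$ respectively. Table \ref{pm1-action} tells us that the automorphism $u^{(-1,1)}$ swaps $t_0 \leftrightarrow t_0^\vee$ and $t_1 \leftrightarrow t_1^\vee$, so on $E_d(a,b,c)^{(-1,1)}$ the element $A$ acts in the same way that $\tfrac14 (t_0 + t_1)(t_0 + t_1 + 2)$ acts on $E_d(a,b,c)$. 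The element $B$, on the other hand, is fixed by $u^{(-1,1)}$, so its action on $E_d(a,b,c)^{(-1,1)}$ coincides with its action on $E_d(a,b,c)$ as given in Lemma \ref{lem:AB_Ed}. This already establishes the claimed $B$-formulas, noting that $\theta_i^*$ in Lemma \ref{lem:AB_Ed(-1,1)} agrees precisely with $\theta_i^*$ in Lemma \ref{lem:AB_Ed}.

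For the $A$-action, the plan is a two-stage substitution. First, use (\ref{t0:Ed-1})--(\ref{t1:Ed-2}) to compute $(t_0 + t_1)v_i$. A cancellation occurs: for $2 \leq i \leq d-1$ the subdiagonal contributions $i(d-i+1)v_{i-1}$ (from $t_0 v_i$) and $i(i-d-1)v_{i-1}$ (from $t_1 v_i$) are opposites, so the result simplifies to a sum $\gamma_i v_i + v_{i+1}$ (with the $v_{i+1}$ term absent when $i=d$). Second, apply $t_0 + t_1 + 2$ once more and divide by $4$. The coefficient of $v_{i+2}$ is always $\tfrac14$; the coefficient of $v_{i+1}$ equals $\tfrac14(\gamma_i + \gamma_{i+1} + 2)$, which vanishes when $i$ is even and equals $\tfrac12$ when $i$ is odd, because $\gamma_i$ contains $\pm a$ with the sign depending on the parity of $i$; and the coefficient of $v_i$ is $\tfrac14 \gamma_i(\gamma_i + 2)$, which, after substituting $i = 2k$ or $i = 2k+1$, factors into the asserted closed form with the shift $\lfloor i/2 \rfloor$.

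The main obstacle is the parity-dependent bookkeeping. One must verify (i) that the superdiagonal term $\tfrac12 v_{i+1}$ appears for odd $i$ and disappears for even $i$, which is the exact opposite of the untwisted case in Lemma \ref{lem:AB_Ed}; (ii) that the diagonal coefficient $\theta_i$ comes out naturally with $\lfloor i/2 \rfloor$ rather than $\lceil i/2 \rceil$; and (iii) that the three boundary indices $i=0, d-1, d$ truncate correctly, using (\ref{t0:Ed-2}) and the endpoint cases of (\ref{t1:Ed-1})--(\ref{t1:Ed-2}). None of the individual steps is conceptually deep, but matching the asserted closed-form expressions requires care in the parity case split and the boundary handling.
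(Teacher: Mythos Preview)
Your approach is correct and follows the same strategy as the paper: identify via Table~\ref{pm1-action} that $A$ and $B$ act on $E_d(a,b,c)^{(-1,1)}$ as $\tfrac14(t_0+t_1)(t_0+t_1+2)$ and $\tfrac14(t_1+t_1^\vee)(t_1+t_1^\vee+2)$ do on $E_d(a,b,c)$, then evaluate using Proposition~\ref{prop:Ed}. Your observation that $u^{(-1,1)}$ fixes $t_1+t_1^\vee$, so the $B$-action can be read off verbatim from Lemma~\ref{lem:AB_Ed}, is a clean shortcut that the paper's proof leaves implicit in the phrase ``routine to verify''; otherwise the arguments coincide.
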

\begin{proof}
By Theorem \ref{thm:hom} and Table \ref{pm1-action} the actions of $A$ and $B$ on $E_d(a,b,c)^{(1,-1)}$ correspond to the actions of 
$$
\frac{(t_0+t_1)(t_0+t_1+2)}{4},
\qquad 
\frac{(t_1+t_1^\vee)(t_1+t_1^\vee+2)}{4}
$$
on $E_d(a,b,c)$, respectively. Using Proposition \ref{prop:Ed} 
it is routine to verify the lemma.
\end{proof}

\begin{lem}\label{lem:AB_V(b)}
Assume that the $\H$-module $E_d(a,b,c)^{(-1,1)}$ is irreducible. 
Then the matrices representing $A$ and $B$ with respect to the $\F$-basis 
\begin{gather}\label{e:basis_V(b)}
\frac{1}{2^i}v_i 
\qquad \hbox{for $i=0,2,\ldots,d-1$}
\end{gather}
for the $\Re$-module $E_d(a,b,c)^{(-1,1)}(b)$ are 
$$
\begin{pmatrix}
\theta_0 & & &  &{\bf 0}
\\ 
1 &\theta_1 
\\
&1 &\theta_2 
 \\
& &\ddots &\ddots
 \\
{\bf 0} & & &1 &\theta_\frac{d-1}{2}
\end{pmatrix},
\qquad 
\begin{pmatrix}
\theta_0^* &\varphi_1 &  & &{\bf 0}
\\ 
 &\theta_1^* &\varphi_2
\\
 &  &\theta_2^* &\ddots
 \\
 & & &\ddots &\varphi_{\frac{d-1}{2}}
 \\
{\bf 0}  & & & &\theta_\frac{d-1}{2}^*
\end{pmatrix}
$$
respectively, where
\allowdisplaybreaks 
\begin{align*}
\theta_i &=
\frac{(2a-d+4i-1)(2a-d+4i+3)}{16}
\qquad (0\leq i\leq \textstyle\frac{d-1}{2}),
\\
\theta_i^* &= 
\frac{(2b-d+4i-3)(2b-d+4i+1)}{16}
\qquad (0\leq i\leq \textstyle\frac{d-1}{2}),
\\
\varphi_i &=
\frac{i(2i-d-1)(2a+2b+2c+4i-d-3)(2a+2b-2c+4i-d-3)}{32}
\qquad 
(1\leq i\leq \textstyle\frac{d-1}{2}).
\end{align*}
The element $\delta$ acts on the $\Re$-module $E_d(a,b,c)^{(-1,1)}(b)$ as scalar multiplication by 
\begin{gather}\label{delta_V(b)}
\frac{(d-1)(d+3)}{16}+\frac{(a-1)(a+1)}{4}
+\frac{b(b-2)}{4}+\frac{(c-1)(c+1)}{4}.
\end{gather}
\end{lem}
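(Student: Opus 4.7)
The plan is to mirror the strategy used successfully in Lemma \ref{lem:AB_V(-d-1/2)}, Lemma \ref{lem:AB_V/V(-d-1/2)}, Lemma \ref{lem:AB_V(-a)}, and Lemma \ref{lem:AB_V/V(-a)}: translate the given $t_0$-eigenspace into an explicit basis, push the formulas for $A$ and $B$ through, and then recover the action of $\delta$ via the central-quadratic relations.

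First I would invoke Lemma \ref{lem3:iota_Ed(-1,1)}, which already identifies the even-index subset $\{v_i : i=0,2,\dots,d-1\}$ as an $\F$-basis for $E_d(a,b,c)^{(-1,1)}(b)$ under the irreducibility hypothesis. Rescaling by $1/2^i$ gives the basis $(\ref{e:basis_V(b)})$; the purpose of the $2^{-i}$ factors is precisely to normalize the subdiagonal of the $A$-matrix to $1$'s, exactly as in the earlier lemmas. Looking at Lemma \ref{lem:AB_Ed(-1,1)}, the action of $A$ on each even-index $v_i$ (with $i\leq d-3$) is $\theta_i v_i + \tfrac14 v_{i+2}$ and on $v_{d-1}$ is $\theta_{d-1} v_{d-1}$; the action of $B$ on each even-index $v_i$ (with $i\geq 2$) is $\theta_i^* v_i + \tfrac{i(d-i+1)}{4}\rho_{i-1} v_{i-2}$ with $Bv_0=\theta_0^* v_0$. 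In particular, the even-index span is closed under $A$ and $B$ (so is indeed an $\Re$-submodule, as it must be), and rescaling immediately yields tridiagonal matrices of the advertised shapes.

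Next, I would evaluate the scalar entries. Re-indexing $i=2j$ for $j=0,1,\dots,(d-1)/2$, one has $\lfloor i/2\rfloor = \lceil i/2\rceil = j$, so $\theta_{2j}$ and $\theta_{2j}^*$ from Lemma \ref{lem:AB_Ed(-1,1)} collapse to the factorizations
\[
\theta_{2j}=\tfrac{(2a-d+4j-1)(2a-d+4j+3)}{16},\qquad \theta_{2j}^*=\tfrac{(2b-d+4j-3)(2b-d+4j+1)}{16},
\]
matching the claim. For the subdiagonal of $B$, expanding $\rho_{2j-1}=c^2-(a+b-\tfrac{d+1}{2}+2j-1)^2$ as a difference of squares and collecting the $(2j)(d-2j+1)/16$ factor against the scaling quotient $2^{2j}/2^{2j-2}=4$ yields exactly $\varphi_j$, after using $-(d-2j+1)=(2j-d-1)/1$ to absorb the sign.

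Finally, the scalar action of $\delta$ follows by a short central-element computation: by Table \ref{pm1-action} and Lemma \ref{lem:t^2_Ed}, on $E_d(a,b,c)^{(-1,1)}$ the elements $t_0^2,t_1^2,t_0^{\vee 2},t_1^{\vee 2}$ act as $b^2,c^2,\tfrac{(d+1)^2}{4},a^2$ respectively, while $t_0$ acts on $E_d(a,b,c)^{(-1,1)}(b)$ as $b$ by definition. Plugging these into $\zeta(\delta)=\tfrac{t_0^2+t_1^2+t_0^{\vee 2}+t_1^{\vee 2}}{4}-\tfrac{t_0}{2}-\tfrac{3}{4}$ from Theorem \ref{thm:hom} and rearranging gives the advertised expression $(\ref{delta_V(b)})$. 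The only place that requires any care is the index bookkeeping in the $\varphi_j$ verification (tracking the sign and the interaction between the $2^{-i}$ rescaling and the coefficient $\tfrac{i(d-i+1)}{4}\rho_{i-1}$); once that is done the rest is routine, in the same spirit as Lemma \ref{lem:AB_V(-a)}.
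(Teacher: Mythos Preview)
Your proposal is correct and follows essentially the same approach as the paper: invoke Lemma~\ref{lem3:iota_Ed(-1,1)} for the basis, apply Lemma~\ref{lem:AB_Ed(-1,1)} to read off the $A$- and $B$-matrices after rescaling, and use Theorem~\ref{thm:hom} together with Lemma~\ref{lem:t^2_Ed} (plus the eigenvalue $b$ of $t_0$ on this subspace) for the scalar action of $\delta$. Your write-up simply fills in the ``straightforward calculation'' that the paper leaves implicit; the explicit invocation of Table~\ref{pm1-action} to track how the twist permutes the $t_i^2$ scalars is a helpful clarification but not a departure from the paper's argument.
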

\begin{proof}
By Lemma \ref{lem3:iota_Ed(-1,1)} the vectors (\ref{e:basis_V(b)}) are an $\F$-basis for $E_d(a,b,c)^{(-1,1)}(b)$. 
Applying Lemma \ref{lem:AB_Ed(-1,1)} a straightforward calculation yields the matrices representing $A$ and $B$ with respect to (\ref{e:basis_V(b)}). By Theorem \ref{thm:hom} and Lemma \ref{lem:t^2_Ed} the element $\delta$ acts on $E_d(a,b,c)^{(-1,1)}(b)$ as scalar multiplication by (\ref{delta_V(b)}). The lemma follows.
\end{proof}

\begin{prop}\label{prop:Rmodule_V(b)}
Assume that the $\H$-module $E_d(a,b,c)^{(-1,1)}$ is irreducible. 
The $\Re$-module $E_d(a,b,c)^{(-1,1)}(b)$ is isomorphic to $$
R_{\frac{d-1}{2}}\left(
-\frac{a+1}{2},
-\frac{b}{2},
-\frac{c+1}{2}
\right).
$$
Moreover the $\Re$-module $E_d(a,b,c)^{(-1,1)}(b)$ is irreducible.
\end{prop}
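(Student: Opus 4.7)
The plan is to imitate the proofs of the analogous Propositions \ref{prop:Rmodule_V(-d-1/2)}, \ref{prop:Rmodule_V(-a)}, and \ref{prop:Rmodule_V/V(-a)}, in which the module identification is made by a matrix comparison and irreducibility is obtained by combining Propositions \ref{prop:irr_E} and \ref{prop:irr_R}.

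First, I would set
\[
(a',b',c',d')=\Bigl(-\tfrac{a+1}{2},\,-\tfrac{b}{2},\,-\tfrac{c+1}{2},\,\tfrac{d-1}{2}\Bigr),
\]
and compare the matrices of $A$ and $B$ from Lemma \ref{lem:AB_V(b)} with those prescribed in Proposition \ref{prop:Rd}(i) for the module $R_{d'}(a',b',c')$. A routine algebraic check confirms that the three sequences $\theta_i,\theta_i^*,\varphi_i$ of Lemma \ref{lem:AB_V(b)} agree with those of Proposition \ref{prop:Rd}(i) after this substitution, and that the scalar in (\ref{delta_V(b)}) matches the scalar of Proposition \ref{prop:Rd}(ii). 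This delivers the $\Re$-module isomorphism $E_d(a,b,c)^{(-1,1)}(b)\cong R_{d'}(a',b',c')$.

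Second, under the assumption that the $\H$-module $E_d(a,b,c)^{(-1,1)}$ is irreducible, irreducibility is preserved by the twisting automorphism $u^{(-1,1)}$, so the criterion of Proposition \ref{prop:irr_E} applies to $E_d(a,b,c)$, yielding that each of $a+b+c,-a+b+c,a-b+c,a+b-c$ avoids the prescribed exclusion set. Translating these four non-containment statements through the substitution $(a,b,c)\mapsto (a',b',c')$, I would verify that $a'+b'+c'+1,-a'+b'+c',a'-b'+c',a'+b'-c'$ all avoid $\{\tfrac{d'}{2}-i\mid i=1,2,\ldots,d'\}$. Proposition \ref{prop:irr_R} then gives that $R_{d'}(a',b',c')$, and hence $E_d(a,b,c)^{(-1,1)}(b)$, is irreducible.

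The main (but essentially bookkeeping) obstacle is the exclusion-set translation in the second step: since $b'=-\tfrac{b}{2}$ is shifted differently from $a'$ and $c'$ (which are shifted by $\tfrac{1}{2}$), the four linear combinations arising in Proposition \ref{prop:irr_R} rescale the exclusion index sets from Proposition \ref{prop:irr_E} in a slightly inhomogeneous way, so I would need to verify each of the four conditions separately rather than invoke a single uniform substitution, taking care with the parity of the indices and the endpoints of the index ranges.
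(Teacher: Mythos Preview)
Your proposal is correct and follows essentially the same route as the paper: the isomorphism is obtained by matching Lemma \ref{lem:AB_V(b)} against Proposition \ref{prop:Rd} with $(a',b',c',d')=(-\tfrac{a+1}{2},-\tfrac{b}{2},-\tfrac{c+1}{2},\tfrac{d-1}{2})$, and irreducibility follows by translating the exclusions of Proposition \ref{prop:irr_E} into those of Proposition \ref{prop:irr_R}. The paper also handles the four exclusion conditions asymmetrically (three of them land in $\{\tfrac{d'}{2}-i\mid i=0,\ldots,d'\}$ while $a'-b'+c'$ lands in $\{\tfrac{d'}{2}-i\mid i=1,\ldots,d'+1\}$), exactly the inhomogeneity you anticipated.
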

\begin{proof}
Set $(a',b',c',d')=(-\frac{a+1}{2},
-\frac{b}{2},
-\frac{c+1}{2},\frac{d-1}{2})$.
Comparing Proposition \ref{prop:Rd} with Lemma \ref{lem:AB_V(b)} yields that the $\Re$-module $E_d(a,b,c)^{(-1,1)}(b)$ is isomorphic to $R_{d'}(a',b',c')$. It follows from Proposition \ref{prop:irr_E} that 
$$
a'+b'+c'+1,
-a'+b'+c',
a'+b'-c'
\not\in
\left\{
\frac{d'}{2}-i
\,\bigg|
\,
i=0,1,\ldots,d'
\right\}
$$
and 
\begin{gather*}
a'-b'+c'
\not\in
\left\{
\frac{d'}{2}-i
\,\bigg|
\,
i=1,2,\ldots,d'+1
\right\}.
\end{gather*}
By Proposition \ref{prop:irr_R} the $\Re$-module $R_{d'}(a',b',c')$ is irreducible. The proposition follows.
\end{proof}

\begin{lem}\label{lem:AB_V/V(b)}
Assume that the $\H$-module $E_d(a,b,c)^{(-1,1)}$ is irreducible. 
Then the matrices representing $A$ and $B$ with respect to the $\F$-basis 
\begin{gather}\label{e:basis_V/V(b)}
\frac{1}{2^{i-1}}v_i +E_d(a,b,c)^{(-1,1)}(b)
\qquad \hbox{for $i=1,3,\ldots,d$}
\end{gather}
for the $\Re$-module $E_d(a,b,c)^{(-1,1)}/E_d(a,b,c)^{(-1,1)}(b)$ are 
$$
\begin{pmatrix}
\theta_0 & & &  &{\bf 0}
\\ 
1 &\theta_1 
\\
&1 &\theta_2 
 \\
& &\ddots &\ddots
 \\
{\bf 0} & & &1 &\theta_\frac{d-1}{2}
\end{pmatrix},
\qquad 
\begin{pmatrix}
\theta_0^* &\varphi_1 &  & &{\bf 0}
\\ 
 &\theta_1^* &\varphi_2
\\
 &  &\theta_2^* &\ddots
 \\
 & & &\ddots &\varphi_{\frac{d-1}{2}}
 \\
{\bf 0}  & & & &\theta_\frac{d-1}{2}^*
\end{pmatrix}
$$
respectively, where 
\begin{align*}
\theta_i &=
\frac{(2a-d+4i-1)(2a-d+4i+3)}{16}
\qquad (0\leq i\leq \textstyle\frac{d-1}{2}),
\\
\theta_i^* &= 
\frac{(2b-d+4i+1)(2b-d+4i+5)}{16}
\qquad (0\leq i\leq \textstyle\frac{d-1}{2}),
\\
\varphi_i &=
\frac{i(2i-d-1)(2a+2b+2c+4i-d+1)(2a+2b-2c+4i-d+1)}{32}
\qquad 
(1\leq i\leq \textstyle\frac{d-1}{2}).
\end{align*}
The element $\delta$ acts on the $\Re$-module $E_d(a,b,c)^{(-1,1)}/E_d(a,b,c)^{(-1,1)}(b)$ as scalar multiplication by 
\begin{gather}\label{delta_V/V(b)}
\frac{(d-1)(d+3)}{16}+\frac{(a-1)(a+1)}{4}
+\frac{b(b+2)}{4}+\frac{(c-1)(c+1)}{4}.
\end{gather}
\end{lem}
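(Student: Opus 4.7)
The plan is to run the same computation used in Lemmas \ref{lem:AB_V(-d-1/2)}--\ref{lem:AB_V(b)}, specialized to the quotient in the $(-1,1)$-twist. First I would invoke Lemma \ref{lem3:iota_Ed(-1,1)} to note that $E_d(a,b,c)^{(-1,1)}(b)$ is precisely the $\F$-span of the even-indexed vectors $v_0, v_2, \ldots, v_{d-1}$, so that the displayed cosets (indexed by odd $i$) form an $\F$-basis for the quotient of $E_d(a,b,c)^{(-1,1)}$ by $E_d(a,b,c)^{(-1,1)}(b)$.

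The heart of the argument is then a direct computation using Lemma \ref{lem:AB_Ed(-1,1)}. For each odd $i$, the expressions for $Av_i$ and $Bv_i$ split into contributions indexed by odd indices ($v_i$ and $v_{i\pm 2}$), which survive in the quotient, and contributions indexed by even indices ($v_{i\pm 1}$), which lie in $E_d(a,b,c)^{(-1,1)}(b)$ and therefore vanish. Rescaling each representative by $\frac{1}{2^{i-1}}$ and re-indexing $i = 2j+1$ converts the coefficient $\frac{1}{4}$ of $v_{i+2}$ in $Av_i$ into the subdiagonal $1$, while the coefficient of $v_{i-2}$ in $Bv_i$ becomes $\varphi_j$ once $\rho_i = c^2 - (a+b-\tfrac{d+1}{2}+i)^2$ is factored as a difference of squares. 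The diagonal entries $\theta_j$ and $\theta_j^*$ descend directly from those of Lemma \ref{lem:AB_Ed(-1,1)} after the substitution $i = 2j+1$.

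For the action of $\delta$, I would apply Theorem \ref{thm:hom} to expand $\delta = \frac{1}{4}(t_0^2+t_1^2+t_0^{\vee 2}+t_1^{\vee 2})-\frac{1}{2}t_0-\frac{3}{4}$. Since the $(-1,1)$-twist from Table \ref{pm1-action} merely permutes the four squares, Lemma \ref{lem:t^2_Ed} shows that their sum acts as $\frac{(d+1)^2}{4}+a^2+b^2+c^2$ on $E_d(a,b,c)^{(-1,1)}$, hence on the quotient. By the block-matrix description in Lemma \ref{lem2:iota_Ed(-1,1)}, after quotienting by the $b$-eigenspace $E_d(a,b,c)^{(-1,1)}(b)$ the residual action of $t_0$ is scalar multiplication by $-b$. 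Substituting these two values into the formula for $\delta$ and regrouping the constants into the shifted forms $\frac{(a-1)(a+1)}{4}$, $\frac{b(b+2)}{4}$, $\frac{(c-1)(c+1)}{4}$, and $\frac{(d-1)(d+3)}{16}$ yields (\ref{delta_V/V(b)}).

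The main obstacle, such as it is, is not conceptual but bookkeeping: one must carefully track the rescaling factors $\frac{1}{2^{i-1}}$ together with the re-indexing $i = 2j+1$ so that the product $\rho_i$ in $Bv_i$ lines up with the stated $(2a+2b+2c+4i-d+1)(2a+2b-2c+4i-d+1)$, and then reorganize the constants in the $\delta$-scalar. Both steps are direct and parallel the corresponding arguments in Lemmas \ref{lem:AB_V(b)} and \ref{lem:AB_V/V(-a)}.
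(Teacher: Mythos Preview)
Your proposal is correct and follows essentially the same route as the paper: invoke Lemma \ref{lem3:iota_Ed(-1,1)} for the basis of the quotient, apply Lemma \ref{lem:AB_Ed(-1,1)} to compute the $A$- and $B$-actions modulo the even-indexed span, and combine Lemma \ref{lem2:iota_Ed(-1,1)} (giving $t_0\equiv -b$ on the quotient) with Theorem \ref{thm:hom} and Lemma \ref{lem:t^2_Ed} to obtain the scalar for $\delta$. Your write-up simply spells out the rescaling and re-indexing that the paper leaves implicit.
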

\begin{proof}
By Lemma \ref{lem3:iota_Ed(-1,1)} the cosets (\ref{e:basis_V/V(b)}) are an $\F$-basis for $E_d(a,b,c)^{(-1,1)}/E_d(a,b,c)^{(-1,1)}(b)$. 
Applying Lemma \ref{lem:AB_Ed(-1,1)} we obtain the matrices representing $A$ and $B$ with respect to (\ref{e:basis_V/V(b)}). 
By Lemma \ref{lem2:iota_Ed(-1,1)} the element $t_0$ acts on $E_d(a,b,c)^{(-1,1)}/E_d(a,b,c)^{(-1,1)}(b)$ as scalar multiplication by $-b$. Combined with Theorem \ref{thm:hom} and Lemma \ref{lem:t^2_Ed}, the element $\delta$ acts on $E_d(a,b,c)^{(-1,1)}/E_d(a,b,c)^{(-1,1)}(b)$ as scalar multiplication by (\ref{delta_V/V(b)}). The lemma follows.
\end{proof}

\begin{prop}\label{prop:Rmodule_V/V(b)}
Assume that the $\H$-module $E_d(a,b,c)^{(-1,1)}$ is irreducible. 
The $\Re$-module $E_d(a,b,c)^{(-1,1)}/E_d(a,b,c)^{(-1,1)}(b)$ is isomorphic to $$
R_{\frac{d-1}{2}}\left(
-\frac{a+1}{2},
-\frac{b}{2}-1,
-\frac{c+1}{2}
\right).
$$
Moreover the $\Re$-module $E_d(a,b,c)^{(-1,1)}/E_d(a,b,c)^{(-1,1)}(b)$ is irreducible.
\end{prop}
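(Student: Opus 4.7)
The plan is to follow the same template used for Propositions \ref{prop:Rmodule_V(-d-1/2)}, \ref{prop:Rmodule_V/V(-d-1/2)}, \ref{prop:Rmodule_V(-a)}, \ref{prop:Rmodule_V/V(-a)} and \ref{prop:Rmodule_V(b)}. First I would set
$$
(a',b',c',d')=\left(-\frac{a+1}{2},\,-\frac{b}{2}-1,\,-\frac{c+1}{2},\,\frac{d-1}{2}\right),
$$
and compare the matrices in Lemma \ref{lem:AB_V/V(b)} with the tridiagonal matrices in Proposition \ref{prop:Rd}(i) for $R_{d'}(a',b',c')$. This is a direct substitution check: the diagonal entries
$(a'+\tfrac{d'}{2}-i)(a'+\tfrac{d'}{2}-i+1)$ and $(b'+\tfrac{d'}{2}-i)(b'+\tfrac{d'}{2}-i+1)$ should reproduce the $\theta_i$ and $\theta_i^*$ of Lemma \ref{lem:AB_V/V(b)}, and the off-diagonal entries should reproduce $\varphi_i$. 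A quick check of the scalar on $\delta$ using Proposition \ref{prop:Rd}(ii) against expression~(\ref{delta_V/V(b)}) will confirm the $\Re$-module isomorphism
$E_d(a,b,c)^{(-1,1)}/E_d(a,b,c)^{(-1,1)}(b)\cong R_{d'}(a',b',c')$.

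Next, for the irreducibility claim, I would invoke the hypothesis that $E_d(a,b,c)^{(-1,1)}$ is irreducible. Since twisting by an algebra automorphism preserves irreducibility, Proposition \ref{prop:irr_E} applied to $E_d(a,b,c)$ yields
$$
a+b+c,\ -a+b+c,\ a-b+c,\ a+b-c\notin\left\{\tfrac{d-1}{2}-i\,\big|\,i=0,2,\ldots,d-1\right\}.
$$
Translating via the substitution $(a',b',c',d')$, I would verify that this forces
$$
a'+b'+c'+1,\ -a'+b'+c',\ a'-b'+c',\ a'+b'-c'\notin\left\{\tfrac{d'}{2}-i\,\big|\,i=1,2,\ldots,d'\right\},
$$
which is the condition in Proposition \ref{prop:irr_R} for $R_{d'}(a',b',c')$ to be irreducible. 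For instance, $a'+b'+c'+1=-\tfrac{a+b+c+2}{2}$ equaling $\tfrac{d'}{2}-i$ would force $a+b+c=2i-\tfrac{d+3}{2}$, which for $i=1,\ldots,\tfrac{d-1}{2}$ gives values lying in the forbidden set $\{\tfrac{1-d}{2},\tfrac{5-d}{2},\ldots,\tfrac{d-5}{2}\}\subseteq\{\tfrac{d-1}{2}-i\mid i=0,2,\ldots,d-1\}$. Analogous checks handle the other three expressions. This then gives irreducibility of $R_{d'}(a',b',c')$, hence of $E_d(a,b,c)^{(-1,1)}/E_d(a,b,c)^{(-1,1)}(b)$.

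The main obstacle, such as it is, is purely bookkeeping: one must track the shift by $-1$ in the second coordinate of $(a',b',c')$ (coming from the quotient being the $+$ eigenspace complement of the $-b$ direction; compare with Proposition \ref{prop:Rmodule_V(b)} where $b'=-\tfrac{b}{2}$), and verify that the set inclusion between the two exceptional sets still makes the implication go in the right direction. No new ideas beyond those already employed in the preceding propositions are needed; the lemmas feeding into this proposition (Lemmas \ref{lem:AB_Ed(-1,1)}, \ref{lem:AB_V/V(b)}, \ref{lem:t^2_Ed}, \ref{lem2:iota_Ed(-1,1)}, \ref{lem3:iota_Ed(-1,1)}) together with Theorem \ref{thm:hom} supply all the data required.
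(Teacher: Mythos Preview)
Your proposal is correct and follows essentially the same approach as the paper: set $(a',b',c',d')=\left(-\tfrac{a+1}{2},-\tfrac{b}{2}-1,-\tfrac{c+1}{2},\tfrac{d-1}{2}\right)$, compare Lemma~\ref{lem:AB_V/V(b)} with Proposition~\ref{prop:Rd} to get the isomorphism, then translate the irreducibility condition from Proposition~\ref{prop:irr_E} into the form required by Proposition~\ref{prop:irr_R}. The only cosmetic difference is that the paper records slightly asymmetric exclusion ranges---$a'+b'+c'+1,\ -a'+b'+c',\ a'+b'-c'\notin\{\tfrac{d'}{2}-i:i=1,\ldots,d'+1\}$ and $a'-b'+c'\notin\{\tfrac{d'}{2}-i:i=0,\ldots,d'\}$---reflecting the $-1$ shift in $b'$, whereas you target the single range $i=1,\ldots,d'$ directly; both versions contain the set needed for Proposition~\ref{prop:irr_R}, so the conclusion is the same.
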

\begin{proof}
Let $(a',b',c',d')=(
-\frac{a+1}{2},
-\frac{b}{2}-1,
-\frac{c+1}{2},
\frac{d-1}{2})$. 
Comparing Proposition \ref{prop:Rd} with Lemma \ref{lem:AB_V/V(b)} yields that the quotient $\Re$-module $E_d(a,b,c)^{(-1,1)}/E_d(a,b,c)^{(-1,1)}(b)$ is isomorphic to $R_{d'}(a',b',c')$. 
It follows from Proposition \ref{prop:irr_E} that 
$$
a'+b'+c'+1',-a'+b'+c',a'+b'-c'\not\in
\left\{
\frac{d'}{2}-i\,\bigg|\, i=1,2,\ldots,d'+1
\right\}
$$
and 
$$
a'-b'+c'\not\in
\left\{
\frac{d'}{2}-i\,\bigg|\, i=0,1,\ldots,d'
\right\}.
$$
By Proposition \ref{prop:irr_R} the $\Re$-module $R_{d'}(a',b',c')$ is irreducible. The proposition follows.
\end{proof}

\begin{thm}\label{thm:Ed(-1,1)}
Assume that the $\H$-module $E_d(a,b,c)^{(-1,1)}$ is irreducible. Then the following hold:
\begin{enumerate}
\item If $b=0$ then 
\begin{table}[H]
\begin{tikzpicture}[node distance=1.2cm]
 \node (0)                  {$\{0\}$};
 \node (E)  [above of=0]   {$E_d(a,b,c)^{(-1,1)}(0)$};
 \node (V)  [above of=E]   {$E_d(a,b,c)^{(-1,1)}$};
 \draw (0)   -- (E);
 \draw (E)  -- (V);
\end{tikzpicture}
\end{table}
\noindent is the lattice of $\Re$-submodules of $E_d(a,b,c)^{(-1,1)}$.

\item If $b\not=0$ then 
 \begin{table}[H]
\begin{tikzpicture}[node distance=1.2cm]
 \node (0)                  {$\{0\}$};
 \node (1)  [above of=0]   {};
 \node (2)  [right of=1]   {};
 \node (3)  [left of=1]   {};
 \node (E1)  [right of=2]  {$E_d(a,b,c)^{(-1,1)}(b)$};
 \node (E2)  [left of=3]   {$E_d(a,b,c)^{(-1,1)}(-b)$};
 \node (V) [above of=1]  {$E_d(a,b,c)^{(-1,1)}$};
 \draw (0)   -- (E1);
 \draw (0)   -- (E2);
 \draw (E1)   -- (V);
 \draw (E2)  -- (V);
\end{tikzpicture}
\end{table}
\noindent is the lattice of $\Re$-submodules of $E_d(a,b,c)^{(-1,1)}$.
\end{enumerate}
\end{thm}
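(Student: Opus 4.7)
The plan is to mirror the proofs of Theorems \ref{thm:Ed} and \ref{thm:Ed(1,-1)} verbatim, with all of the hard structural work already packaged into Propositions \ref{prop:Rmodule_V(b)} and \ref{prop:Rmodule_V/V(b)}. The essential point in both cases is that the eigenspace decomposition of $t_0$, together with the irreducibility of the associated $\Re$-subquotients, controls the entire lattice via Jordan--H\"{o}lder together with Proposition \ref{prop:irr_Rmodule_in_t0eigenspace}.

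For part (i), with $b=0$, I would specialize Propositions \ref{prop:Rmodule_V(b)} and \ref{prop:Rmodule_V/V(b)} at $b=0$ to conclude that both $E_d(a,0,c)^{(-1,1)}(0)$ and the quotient $E_d(a,0,c)^{(-1,1)}/E_d(a,0,c)^{(-1,1)}(0)$ are irreducible $\Re$-modules. Consequently
$$
\{0\}\subset E_d(a,0,c)^{(-1,1)}(0)\subset E_d(a,0,c)^{(-1,1)}
$$
is a composition series of length two. By Lemma \ref{lem:iota_Ed(-1,1)}(i), the only $t_0$-eigenvalue on $E_d(a,0,c)^{(-1,1)}$ is $0$, so Proposition \ref{prop:irr_Rmodule_in_t0eigenspace} forces every irreducible $\Re$-submodule to lie inside $E_d(a,0,c)^{(-1,1)}(0)$. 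Hence the above chain is the unique composition series and the lattice reduces to the chain displayed in (i).

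For part (ii), with $b\ne 0$, Propositions \ref{prop:Rmodule_V(b)} and \ref{prop:Rmodule_V/V(b)} again yield the composition series
$$
\{0\}\subset E_d(a,b,c)^{(-1,1)}(b)\subset E_d(a,b,c)^{(-1,1)}.
$$
Lemma \ref{lem:iota_Ed(-1,1)}(ii) asserts that $t_0$ is diagonalizable with exactly two eigenvalues $\pm b$, so Proposition \ref{prop:t0eigenspace=Rmodule} provides a second nonzero proper $\Re$-submodule $E_d(a,b,c)^{(-1,1)}(-b)$. Invoking the Jordan--H\"{o}lder theorem upgrades
$$
\{0\}\subset E_d(a,b,c)^{(-1,1)}(-b)\subset E_d(a,b,c)^{(-1,1)}
$$
to another composition series. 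Finally Proposition \ref{prop:irr_Rmodule_in_t0eigenspace} confines every irreducible $\Re$-submodule to one of the two eigenspaces $E_d(a,b,c)^{(-1,1)}(\pm b)$, which pins down precisely the diamond lattice in (ii).

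The principal obstacle — proving that $E_d(a,b,c)^{(-1,1)}(b)$ and the corresponding quotient are each irreducible as $\Re$-modules — has already been discharged in Propositions \ref{prop:Rmodule_V(b)} and \ref{prop:Rmodule_V/V(b)}. Thus the only remaining content is the Jordan--H\"{o}lder bookkeeping used to rule out additional irreducible $\Re$-submodules, and this goes through cleanly because $t_0$ acts semisimply with exactly the two eigenvalues $\pm b$ when $b\ne 0$, whereas in the $b=0$ case the unique eigenvalue already funnels every irreducible $\Re$-submodule into the single eigenspace.
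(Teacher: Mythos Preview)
Your proof is correct and follows essentially the same approach as the paper, which simply states that the result follows by an argument similar to the proof of Theorem~\ref{thm:Ed(1,-1)}. You have written out explicitly the Jordan--H\"older bookkeeping that the paper leaves implicit, invoking Propositions~\ref{prop:Rmodule_V(b)}, \ref{prop:Rmodule_V/V(b)}, Lemma~\ref{lem:iota_Ed(-1,1)}, and Proposition~\ref{prop:irr_Rmodule_in_t0eigenspace} in exactly the intended way.
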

\begin{proof}
Using the above lemmas and propositions, the result follows by an  argument similar to the proof of Theorem \ref{thm:Ed(1,-1)}.
\end{proof}

\subsection{The lattice of $\Re$-submodules of $E_d(a,b,c)^{(-1,-1)}$}\label{s:lattice_Ed(-1,-1)}

Set the parameter 
$$
\tau=a+b-c-\frac{d+1}{2}
$$
as in Proposition \ref{prop:Ed}.

\begin{lem}\label{lem2:iota_Ed(-1,-1)}
The matrix representing $t_0$ with respect to the $\F$-basis 
\begin{gather*}
\textstyle
v_i+(\tau+i)v_{i-1}
\quad 
\hbox{for $i=1,3,\ldots,d$},
\quad 
v_i 
\quad 
\hbox{for $i=0,2,\ldots,d-1$}
\end{gather*}
for $E_d(a,b,c)^{(-1,-1)}$
is 
\begin{gather*}
\begin{pmatrix}
c I_{\frac{d+1}{2}} & \rvline & -I_{\frac{d+1}{2}}
\\
\hline
{\bf 0} & \rvline &-c  I_{\frac{d+1}{2}} 
\end{pmatrix}.
\end{gather*}
\end{lem}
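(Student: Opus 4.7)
The plan is to identify $t_0$ acting on the twisted module $E_d(a,b,c)^{(-1,-1)}$ with $t_1^\vee$ acting on $E_d(a,b,c)$ via Table \ref{pm1-action}, and then to evaluate $t_1^\vee$ directly on each proposed basis vector using the explicit formulas (\ref{t1vee:Ed}) from Proposition \ref{prop:Ed}. A preliminary observation is that the proposed list is genuinely an $\F$-basis for $E_d(a,b,c)^{(-1,-1)}$, since the change-of-basis matrix from $\{v_i\}_{i=0}^d$ is unit triangular after reordering.

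For odd $i$, I would expand
\[
t_1^\vee\bigl(v_i+(\tau+i)v_{i-1}\bigr)
= (\sigma+i)(\tau+i) v_{i-1}+\tfrac{\sigma+\tau+2i}{2} v_i + (\tau+i)\Bigl[-\tfrac{\sigma+\tau+2i}{2} v_{i-1}-v_i\Bigr],
\]
where the first summand uses the odd-$i$ formula in (\ref{t1vee:Ed}) and the second uses the even-$i$ formula applied to $v_{i-1}$. Collecting coefficients, the coefficient of $v_{i-1}$ becomes $(\tau+i)\bigl((\sigma+i)-\tfrac{\sigma+\tau+2i}{2}\bigr)=(\tau+i)\tfrac{\sigma-\tau}{2}$, and the coefficient of $v_i$ becomes $\tfrac{\sigma+\tau+2i}{2}-(\tau+i)=\tfrac{\sigma-\tau}{2}$. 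The key algebraic identity is $\tfrac{\sigma-\tau}{2}=c$, which follows directly from the definitions of $\sigma$ and $\tau$ in Proposition \ref{prop:Ed}. Thus the expression collapses to $c\bigl(v_i+(\tau+i)v_{i-1}\bigr)$, producing the $cI_{(d+1)/2}$ block in the top-left and the zero block in the bottom-left of the claimed matrix.

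For even $i$, (\ref{t1vee:Ed}) gives $t_1^\vee v_i=-\tfrac{\sigma+\tau+2i+2}{2} v_i-v_{i+1}$. To write this in the new basis, I would substitute $v_{i+1}=\bigl(v_{i+1}+(\tau+i+1)v_i\bigr)-(\tau+i+1)v_i$; the coefficient of $v_i$ then becomes $-\tfrac{\sigma+\tau+2i+2}{2}+(\tau+i+1)=\tfrac{\tau-\sigma}{2}=-c$, and the coefficient of the new basis vector $v_{i+1}+(\tau+i+1)v_i$ is $-1$. This yields the $-I_{(d+1)/2}$ block in the top-right and the $-cI_{(d+1)/2}$ block in the bottom-right.

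There is no conceptual obstacle here: the proof is entirely bookkeeping. The only subtlety is keeping straight which formula from (\ref{t1vee:Ed}) applies to each $v_j$ (depending on parity), and then exploiting $\frac{\sigma-\tau}{2}=c$ at the right moment to recognize the simplification. I expect the write-up to parallel the analogous verifications in Lemmas \ref{lem2:iota_Ed}, \ref{lem2:iota_Ed(1,-1)}, and \ref{lem2:iota_Ed(-1,1)} in the preceding subsections.
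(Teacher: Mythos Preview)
Your proof is correct and follows exactly the approach of the paper: identify $t_0$ on the twisted module with $t_1^\vee$ via Table~\ref{pm1-action}, then apply (\ref{t1vee:Ed}) directly. The paper's proof simply declares the computation ``routine,'' whereas you have carried it out in full; there is nothing to add.
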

\begin{proof}
By Table \ref{pm1-action} the action of $t_0$ on $E_d(a,b,c)^{(-1,-1)}$ corresponds to the action of $t_1^\vee$ on $E_d(a,b,c)$. 
Applying (\ref{t1vee:Ed}) it is routine to verify the lemma.
\end{proof}

\begin{lem}\label{lem:iota_Ed(-1,-1)}
\begin{enumerate}
\item If $c=0$ then $t_0$ is not diagonalizable on $E_d(a,b,c)^{(-1,-1)}$ with exactly one eigenvalue $0$.

\item If $c\not=0$ then $t_0$ is diagonalizable on $E_d(a,b,c)^{(-1,-1)}$ with exactly two eigenvalues $\pm c$. 
\end{enumerate}
\end{lem}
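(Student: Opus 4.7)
The plan is to read the result directly off the matrix displayed in Lemma \ref{lem2:iota_Ed(-1,-1)}, which represents $t_0$ on $E_d(a,b,c)^{(-1,-1)}$ in block upper triangular form with diagonal blocks $cI_{(d+1)/2}$ and $-cI_{(d+1)/2}$ and off-diagonal block $-I_{(d+1)/2}$. Since the matrix is block upper triangular with scalar diagonal blocks, its characteristic polynomial is
\[
(\lambda-c)^{\frac{d+1}{2}}(\lambda+c)^{\frac{d+1}{2}},
\]
so the only candidates for eigenvalues are $\pm c$.

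For part (ii), suppose $c\neq 0$. Writing a vector as $(v_1,v_2)^T$ according to the block decomposition, the equation $(t_0-cI)(v_1,v_2)^T=0$ reduces to $v_2=0$, giving an eigenspace for $c$ of dimension $(d+1)/2$; similarly, $(t_0+cI)(v_1,v_2)^T=0$ reduces to $2cv_1=v_2$, which (since $c\neq 0$) gives an eigenspace for $-c$ of dimension $(d+1)/2$. The two eigenspaces together span $E_d(a,b,c)^{(-1,-1)}$, so $t_0$ is diagonalizable with spectrum exactly $\{\pm c\}$.

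For part (i), suppose $c=0$. Then the displayed matrix becomes
\[
\begin{pmatrix}
\mathbf{0} & -I_{\frac{d+1}{2}} \\ \mathbf{0} & \mathbf{0}
\end{pmatrix},
\]
which is nonzero but squares to zero, hence nilpotent with unique eigenvalue $0$; its kernel has dimension $(d+1)/2 < d+1$, so $t_0$ fails to be diagonalizable. This establishes both claims, and there is no genuine obstacle since everything follows mechanically from the explicit block form in Lemma \ref{lem2:iota_Ed(-1,-1)}; the same template was used to prove Lemmas \ref{lem:iota_Ed(1,-1)} and \ref{lem:iota_Ed(-1,1)}.
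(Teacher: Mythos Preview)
Your proof is correct and takes essentially the same approach as the paper, which simply records the result as ``Immediate from Lemma~\ref{lem2:iota_Ed(-1,-1)}.'' You have just made the routine linear-algebra details explicit.
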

\begin{proof}
Immediate from Lemma \ref{lem2:iota_Ed(-1,-1)}.
\end{proof}

\begin{lem}\label{lem3:iota_Ed(-1,-1)}
$E_d(a,b,c)^{(-1,-1)}(c)$ is of dimension $\frac{d+1}{2}$ with the $\F$-basis 
\begin{gather*}
\textstyle
v_i+(\tau+i) v_{i-1}
\qquad 
\hbox{for $i=1,3,\ldots,d$}.
\end{gather*}
\end{lem}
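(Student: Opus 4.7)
My plan is to read the lemma off directly from the matrix form of $t_0$ established in Lemma \ref{lem2:iota_Ed(-1,-1)}, which expresses $t_0$ on $E_d(a,b,c)^{(-1,-1)}$ in the basis
$$
w_1, w_3, \ldots, w_d, \quad u_0, u_2, \ldots, u_{d-1},
$$
where $w_i = v_i + (\tau+i)v_{i-1}$ and $u_i = v_i$, as the block matrix
$$
M = \begin{pmatrix} c I_{(d+1)/2} & -I_{(d+1)/2} \\ 0 & -c I_{(d+1)/2} \end{pmatrix}.
$$
The eigenspace $E_d(a,b,c)^{(-1,-1)}(c)$ corresponds to the kernel of $M - c I_{d+1}$, so the problem reduces to a one-line linear-algebra computation.

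First I would subtract, obtaining
$$
M - c I_{d+1} = \begin{pmatrix} 0 & -I_{(d+1)/2} \\ 0 & -2c \, I_{(d+1)/2} \end{pmatrix}.
$$
Then I would solve for its kernel in two cases. If $c \neq 0$, a vector $(x,y)^{T}$ lies in the kernel if and only if $y = 0$ (the lower block forces $y = 0$, and the upper block then imposes no further condition), so the kernel coincides with the span of $w_1, w_3, \ldots, w_d$. If $c = 0$, the matrix becomes $\left(\begin{smallmatrix} 0 & -I \\ 0 & 0 \end{smallmatrix}\right)$, whose kernel is again exactly the span of $w_1, w_3, \ldots, w_d$. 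In either case the eigenspace has the claimed dimension $\frac{d+1}{2}$ and the claimed basis.

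Since the kernel-of-a-block-triangular-matrix computation is elementary and all the substantive work has already been carried out in Lemma \ref{lem2:iota_Ed(-1,-1)}, I do not anticipate any genuine obstacle; the only mild subtlety is separating the cases $c \neq 0$ and $c = 0$ so that the argument covers the non-diagonalizable situation treated in Lemma \ref{lem:iota_Ed(-1,-1)}(i) as well as the diagonalizable one in Lemma \ref{lem:iota_Ed(-1,-1)}(ii).
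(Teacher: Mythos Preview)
Your proposal is correct and follows exactly the paper's approach: the paper's proof is simply ``Immediate from Lemma \ref{lem2:iota_Ed(-1,-1)},'' and you have just spelled out explicitly the kernel computation that makes it immediate.
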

\begin{proof}
Immediate from Lemma \ref{lem2:iota_Ed(-1,-1)}.
\end{proof}

\begin{lem}\label{lem:AB_Ed(-1,-1)}
The actions of $A$ and $B$ on the $\H$-module $E_d(a,b,c)^{(-1,-1)}$ are as follows:
\begin{align*}
A v_i &=
\left\{
\begin{array}{ll}
\displaystyle
\theta_i v_{i}+\frac{1}{4} v_{i+2} 
\qquad 
&\hbox{for $i=0,2,\ldots,d-3$},
\\
\displaystyle
\theta_i v_i+\frac{1}{2}v_{i+1}+\frac{1}{4} v_{i+2}
\qquad 
&\hbox{for $i=1,3,\ldots,d-2$},
\end{array}
\right.
\\
A v_{d-1}&=\theta_{d-1} v_{d-1},
\qquad 
A v_d=\theta_d v_d,
\\
B v_i &=
\left\{
\begin{array}{ll}
\displaystyle
\theta^*_i v_i +\frac{i(d-i+1)}{2} v_{i-1}+\frac{i(d-i+1)}{4}\rho_{i-1} v_{i-2}
\qquad 
&\hbox{for $i=2,4,\ldots,d-1$},
\\
\displaystyle
\theta^*_i v_i+\frac{(i-1)(d-i+2)}{4} \rho_i v_{i-2}
\qquad 
&\hbox{for $i=3,5,\ldots, d$},
\end{array}
\right.
\\
B v_0 &= \theta^*_0 v_0,
\qquad 
B v_1 = \theta^*_1 v_1,
\end{align*}
where 
\begin{align*}
\theta_i
&=
\left(\frac{a}{2}-\frac{d-3}{4}+\left\lfloor \frac{i}{2}\right\rfloor\right)
\left(\frac{a}{2}-\frac{d+1}{4}+\left\lfloor \frac{i}{2}\right\rfloor\right)
\qquad (0\leq i\leq d),
\\
\theta_i^*
&=
\left(\frac{b}{2}-\frac{d-3}{4}+\left\lfloor \frac{i}{2}\right\rfloor\right)
\left(\frac{b}{2}-\frac{d+1}{4}+\left\lfloor \frac{i}{2}\right\rfloor\right)
\qquad (0\leq i\leq d).
\end{align*}
\end{lem}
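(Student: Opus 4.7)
The strategy is to reduce the computation of $A$ and $B$ on $E_d(a,b,c)^{(-1,-1)}$ to a computation of certain quadratic expressions in $t_0, t_1, t_0^\vee$ acting on the original module $E_d(a,b,c)$, where the action of every $t_*$ is already explicitly described by Proposition \ref{prop:Ed}.

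First, I would invoke Theorem \ref{thm:hom}, which identifies $\zeta(A)$ and $\zeta(B)$ inside $\H$ as $\tfrac{(t_1^\vee+t_0^\vee)(t_1^\vee+t_0^\vee+2)}{4}$ and $\tfrac{(t_1+t_1^\vee)(t_1+t_1^\vee+2)}{4}$, respectively. Applying the automorphism $u^{(-1,-1)}$ from Table \ref{pm1-action}, which swaps $t_0\leftrightarrow t_1^\vee$ and $t_1\leftrightarrow t_0^\vee$, these images become
\begin{gather*}
u^{(-1,-1)}(\zeta(A))=\frac{(t_0+t_1)(t_0+t_1+2)}{4},\qquad
u^{(-1,-1)}(\zeta(B))=\frac{(t_0^\vee+t_0)(t_0^\vee+t_0+2)}{4}.
\end{gather*}
By definition of the twisted module $V^{\e}$, the actions of $A$ and $B$ on $E_d(a,b,c)^{(-1,-1)}$ agree with the actions of these two elements on the original $E_d(a,b,c)$.

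Next, using formulas (\ref{t0:Ed-1}), (\ref{t0:Ed-2}), (\ref{t1:Ed-1}), (\ref{t1:Ed-2}), I would compute $(t_0+t_1)v_i$ on the basis $\{v_i\}_{i=0}^d$, distinguishing even and odd indices and treating the endpoints $v_0,v_d$ separately. The coefficients $-\frac{d-2i+1}{2}$ from $t_0$ and the scalar $-a$ (resp.\ $a$) on odd- (resp.\ even-) indexed vectors from $t_1$ combine neatly; applying $t_0+t_1$ once more (and then adding $2(t_0+t_1)$) yields $(t_0+t_1)(t_0+t_1+2)v_i$. The same procedure, using (\ref{t0vee:Ed-1})--(\ref{t0vee:Ed-2}) in place of the $t_1$-formulas, handles $(t_0+t_0^\vee)(t_0+t_0^\vee+2)v_i$. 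Dividing by $4$ and simplifying the diagonal scalars into the forms stated with $\lfloor i/2\rfloor$ should reproduce precisely the $\theta_i,\theta_i^*$ given, and the off-diagonal entries $\tfrac{1}{4},\tfrac{1}{2},\tfrac{i(d-i+1)}{2},\tfrac{i(d-i+1)}{4}\rho_{i-1},\tfrac{(i-1)(d-i+2)}{4}\rho_i$ should drop out once the cross terms (involving $\rho_{i-1}=c^2-(a+b-\tfrac{d+1}{2}+i-1)^2$ arising when $t_0^\vee$ hits odd-indexed vectors) are collected.

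The main bookkeeping obstacle is the parity-dependence of every formula in Proposition \ref{prop:Ed}: $t_0,t_1,t_0^\vee,t_1^\vee$ all act by different expressions on even- versus odd-indexed basis vectors, and the endpoint vectors $v_0,v_d$ have their own special rules. Consequently one must verify eight to ten separate cases (by parity of $i$ and by whether $i\in\{0,1,d-1,d\}$) to confirm that the diagonal, super-diagonal, sub-diagonal, and two-step sub- and super-diagonal entries all match the stated closed forms, and to check that the coefficients involving $\rho_{i-1},\rho_i$ appear exactly where claimed. Beyond this case analysis the calculation is mechanical and parallel to the proofs of Lemmas \ref{lem:AB_Ed}, \ref{lem:AB_Ed(1,-1)}, and \ref{lem:AB_Ed(-1,1)}, so I would simply state that the verification is routine after the two substitutions above.
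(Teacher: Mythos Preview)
Your proposal is correct and matches the paper's proof essentially verbatim: the paper also invokes Theorem~\ref{thm:hom} and Table~\ref{pm1-action} to identify the actions of $A$ and $B$ on $E_d(a,b,c)^{(-1,-1)}$ with those of $\frac{(t_0+t_1)(t_0+t_1+2)}{4}$ and $\frac{(t_0+t_0^\vee)(t_0+t_0^\vee+2)}{4}$ on $E_d(a,b,c)$, and then declares the remaining verification via Proposition~\ref{prop:Ed} to be routine.
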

\begin{proof}
By Theorem \ref{thm:hom} and Table \ref{pm1-action} the actions of $A$ and $B$ on $E_d(a,b,c)^{(-1,-1)}$ correspond to the actions of 
$$
\frac{(t_0+t_1)(t_0+t_1+2)}{4},
\qquad 
\frac{(t_0+t_0^\vee)(t_0+t_0^\vee+2)}{4}
$$
on $E_d(a,b,c)$, respectively.
Using Proposition \ref{prop:Ed} it is routine to verify the lemma.
\end{proof}

\begin{lem}\label{lem:AB_V(c)}
The matrices representing $A$ and $B$ with respect to the $\F$-basis 
\begin{gather}\label{basis:V(c)}
\frac{1}{2^{i-1}}(v_i+(\tau+i)v_{i-1})
\qquad \hbox{for $i=1,3,\ldots,d$}
\end{gather}
for the $\Re$-module $E_d(a,b,c)^{(-1,-1)}(c)$ are 
$$
\begin{pmatrix}
\theta_0 & & &  &{\bf 0}
\\ 
1 &\theta_1 
\\
&1 &\theta_2 
 \\
& &\ddots &\ddots
 \\
{\bf 0} & & &1 &\theta_\frac{d-1}{2}
\end{pmatrix},
\qquad 
\begin{pmatrix}
\theta_0^* &\varphi_1 &  & &{\bf 0}
\\ 
 &\theta_1^* &\varphi_2
\\
 &  &\theta_2^* &\ddots
 \\
 & & &\ddots &\varphi_{\frac{d-1}{2}}
 \\
{\bf 0}  & & & &\theta_\frac{d-1}{2}^*
\end{pmatrix}
$$
respectively, where 
\begin{align*}
\theta_i &=
\frac{(2a-d+4i-1)(2a-d+4i+3)}{16}
\qquad (0\leq i\leq \textstyle\frac{d-1}{2}),
\\
\theta_i^* &= 
\frac{(2b-d+4i-1)(2b-d+4i+3)}{16}
\qquad (0\leq i\leq \textstyle\frac{d-1}{2}),
\\
\varphi_i &=
\frac{i(2i-d-1)(2a+2b+2c-d+4i-3)(2a+2b-2c-d+4i+1)}{32}
\qquad 
(1\leq i\leq \textstyle\frac{d-1}{2}).
\end{align*}
The element $\delta$ acts on the $\Re$-module $E_d(a,b,c)^{(-1,-1)}(c)$ as scalar multiplication by 
\begin{gather}\label{delta_V(c)}
\frac{(d-1)(d+3)}{16}+\frac{(a-1)(a+1)}{4}+\frac{(b-1)(b+1)}{4}+\frac{c(c-2)}{4}.
\end{gather}
\end{lem}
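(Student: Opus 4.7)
The plan is to follow the template established by Lemmas \ref{lem:AB_V(-d-1/2)}, \ref{lem:AB_V(-a)}, and \ref{lem:AB_V(b)}, with modifications appropriate for the $(-1,-1)$-twist. Set $w_i := \frac{1}{2^{i-1}}\bigl(v_i + (\tau+i) v_{i-1}\bigr)$ for $i = 1, 3, \ldots, d$. By Lemma \ref{lem3:iota_Ed(-1,-1)} the vectors $w_i$ form an $\F$-basis for $E_d(a,b,c)^{(-1,-1)}(c)$, since they are nonzero scalar multiples of the basis listed there.

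First I would compute $A w_i$ using the formulas in Lemma \ref{lem:AB_Ed(-1,-1)}. For odd $i$, the vector $A v_i$ lies in $\operatorname{span}\{v_i,v_{i+1},v_{i+2}\}$ and $A v_{i-1}$ lies in $\operatorname{span}\{v_{i-1},v_{i+1}\}$ (with boundary modifications at the ends), so $A(v_i + (\tau+i)v_{i-1})$ lands in $\operatorname{span}\{v_{i-1},v_i,v_{i+1},v_{i+2}\}$. The key observation is that $\theta_{i-1} = \theta_i$ when $i$ is odd, because the floor functions in Lemma \ref{lem:AB_Ed(-1,-1)} agree on $i-1$ and $i$; this allows the $v_{i-1},v_i$ contribution to be recombined into $\theta_i w_i$ and the $v_{i+1},v_{i+2}$ contribution into $w_{i+2}$ after reconciling the dyadic prefactors. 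Rewriting $\theta_i$ via $j = (i-1)/2$ gives the claimed $\frac{(2a-d+4j-1)(2a-d+4j+3)}{16}$.

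Next I would compute $B w_i$ analogously. For odd $i$, the vectors $B v_i$ and $B v_{i-1}$ land in $\operatorname{span}\{v_{i-3},v_{i-2},v_{i-1},v_i\}$. Using $\theta^*_{i-1}=\theta^*_i$ for odd $i$, the expansion regroups as $\theta^*_j w_i + \varphi_j w_{i-2}$; the asymmetric $c$-dependence of $\varphi_j$ arises from the interaction between the $\rho$-factors in Lemma \ref{lem:AB_Ed(-1,-1)} (which contribute $\rho_{i-1}$ through $c^2 - (a+b-\tfrac{d+1}{2}+i-1)^2$) and the shift $(\tau+i)$ present in the basis (where $\tau$ itself carries $-c$), so that the two linear factors in $\varphi_j$ end up with different $c$-shifts. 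Finally, the computation of $\delta$ follows from Theorem \ref{thm:hom}: since $\delta \mapsto \tfrac{t_0^2+t_1^2+t_0^{\vee 2}+t_1^{\vee 2}}{4} - \tfrac{t_0}{2} - \tfrac{3}{4}$ and the $t_\bullet^2$ are central, Lemma \ref{lem:t^2_Ed} together with Table \ref{pm1-action} shows that their sum still equals $\tfrac{(d+1)^2}{4}+a^2+b^2+c^2$ on $E_d(a,b,c)^{(-1,-1)}$; on the eigenspace $t_0$ acts as $c$, and a short simplification of $\tfrac{a^2+b^2+c^2}{4}+\tfrac{(d+1)^2}{16}-\tfrac{c}{2}-\tfrac{3}{4}$ recovers the expression (\ref{delta_V(c)}).

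The main obstacle is the $B$-computation: verifying that, after the basis change, the off-diagonal entries $\varphi_j$ collapse to the specific asymmetric-in-$c$ form claimed. This requires careful bookkeeping of how the $(\tau+i)$ cross terms combine with the $\rho_{i-1}$ and $\rho_i$ contributions from Lemma \ref{lem:AB_Ed(-1,-1)} and how the dyadic prefactors $2^{-(i-1)}$ versus $2^{-(i-3)}$ balance. Everything else is the routine translation already carried out in the analogous lemmas above.
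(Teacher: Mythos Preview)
Your proposal is correct and follows essentially the same approach as the paper: the paper's own proof simply cites Lemma~\ref{lem3:iota_Ed(-1,-1)} for the basis, invokes Lemma~\ref{lem:AB_Ed(-1,-1)} and says ``a straightforward calculation'' yields the matrices for $A$ and $B$, and then uses Theorem~\ref{thm:hom} with Lemma~\ref{lem:t^2_Ed} for $\delta$. Your write-up supplies exactly the intermediate details (the floor-function coincidence $\theta_{i-1}=\theta_i$ for odd $i$, the recombination into $w_{i+2}$, and the twist-invariance of $\sum t_\bullet^2$) that the paper omits.
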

\begin{proof}
By Lemma \ref{lem3:iota_Ed(-1,-1)} the vectors (\ref{basis:V(c)}) are an $\F$-basis for $E_d(a,b,c)^{(-1,-1)}(c)$. 
Applying Lemma \ref{lem:AB_Ed(-1,-1)} a straightforward calculation yields the matrices representing $A$ and $B$ with respect to (\ref{basis:V(c)}). Using Theorem \ref{thm:hom} and Lemma \ref{lem:t^2_Ed} yields that $\delta$ acts on $E_d(a,b,c)^{(-1,-1)}(c)$ as scalar multiplication by (\ref{delta_V(c)}). The lemma follows.
\end{proof}

\begin{prop}\label{prop:Rmodule_V(c)}
The $\Re$-module $E_d(a,b,c)^{(-1,-1)}(c)$ is isomorphic to $$
R_{\frac{d-1}{2}}\left(
-\frac{a+1}{2},
-\frac{b+1}{2},
-\frac{c}{2}
\right).
$$
Moreover the $\Re$-module $E_d(a,b,c)^{(-1,-1)}(c)$ is irreducible if the $\H$-module $E_d(a,b,c)^{(-1,-1)}$ is irreducible.
\end{prop}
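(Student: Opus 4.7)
The plan is to follow the two-step template used in the proofs of Propositions \ref{prop:Rmodule_V(-a)} and \ref{prop:Rmodule_V(b)}.

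For the isomorphism, I set $(a',b',c',d')=(-\frac{a+1}{2},-\frac{b+1}{2},-\frac{c}{2},\frac{d-1}{2})$ and compare the defining data of $R_{d'}(a',b',c')$ from Proposition \ref{prop:Rd} with those of $E_d(a,b,c)^{(-1,-1)}(c)$ given in Lemma \ref{lem:AB_V(c)}. Since $\Re$ is generated by $A$, $B$, $\delta$ by Lemma \ref{lem:delta}, it suffices to match the tridiagonal matrices representing $A$ and $B$ on a suitable basis, together with the scalar by which $\delta$ acts. Substituting the above $(a',b',c',d')$ into the formulas for $\theta_i,\theta_i^*,\varphi_i$ of Proposition \ref{prop:Rd} is a direct algebraic identification with the entries of Lemma \ref{lem:AB_V(c)}; the $\delta$-eigenvalue from Proposition \ref{prop:Rd}(ii) likewise reduces to the scalar in (\ref{delta_V(c)}).

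For irreducibility I use that twisting by an $\F$-algebra automorphism of $\H$ preserves the lattice of $\H$-submodules, so irreducibility of $E_d(a,b,c)^{(-1,-1)}$ is equivalent to irreducibility of $E_d(a,b,c)$. Proposition \ref{prop:irr_E} then tells me that none of $a+b+c,\,-a+b+c,\,a-b+c,\,a+b-c$ lies in $\{\frac{d-1}{2}-i\mid i=0,2,\ldots,d-1\}$. Under the substitution defining $(a',b',c')$, these four quantities transform into $a'+b'+c'+1$, $-a'+b'+c'$, $a'-b'+c'$, $a'+b'-c'$, up to explicit integer shifts. I would then verify that each of the transformed quantities misses the set $\{\frac{d'}{2}-i\mid i=1,\ldots,d'\}$ demanded by Proposition \ref{prop:irr_R}, conclude irreducibility of $R_{d'}(a',b',c')$, and hence irreducibility of $E_d(a,b,c)^{(-1,-1)}(c)$.

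The main obstacle is the index bookkeeping in the second step: the forbidden index set for $E_d$ has $\frac{d+1}{2}=d'+1$ elements, while the forbidden set for $R_{d'}$ has $d'$ elements, so exactly one of the four inclusions may need to be checked against a shifted range (the phenomenon that appeared for $-a'+b'+c'$ in Proposition \ref{prop:Rmodule_V(-a)} and for $a'-b'+c'$ in Proposition \ref{prop:Rmodule_V(b)}). Because here $c'=-\frac{c}{2}$ is the unshifted parameter while $a'$ and $b'$ carry the $-\frac{1}{2}$ shift, I expect the quantity $a'+b'-c'$ to be the one requiring the range $\{0,1,\ldots,d'\}$ rather than $\{1,\ldots,d'\}$, with the other three inclusions fitting the nominal range, in parallel with the earlier propositions.
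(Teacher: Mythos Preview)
Your proposal is correct and follows essentially the same approach as the paper: set $(a',b',c',d')=(-\frac{a+1}{2},-\frac{b+1}{2},-\frac{c}{2},\frac{d-1}{2})$, compare Proposition~\ref{prop:Rd} with Lemma~\ref{lem:AB_V(c)} for the isomorphism, and then translate the irreducibility condition of Proposition~\ref{prop:irr_E} into the hypothesis of Proposition~\ref{prop:irr_R}. Your identification of $a'+b'-c'$ as the exceptional quantity is right; the only slip is the predicted shifted range---carrying out the computation gives $a'+b'-c'\notin\{\frac{d'}{2}-i\mid i=1,\ldots,d'+1\}$ (not $\{0,\ldots,d'\}$), with the remaining three quantities landing in $\{\frac{d'}{2}-i\mid i=0,\ldots,d'\}$, exactly as the paper records. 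Since both ranges contain $\{1,\ldots,d'\}$, this does not affect the argument.
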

\begin{proof}
Set $(a',b',c',d')=(
-\frac{a+1}{2},
-\frac{b+1}{2},
-\frac{c}{2},
\frac{d-1}{2})$.
Comparing Proposition \ref{prop:Rd} with Lemma \ref{lem:AB_V(c)} it follows that the $\Re$-module $E_d(a,b,c)^{(-1,-1)}(c)$ is isomorphic to $R_{d'}(a',b',c')$. Suppose that the $\H$-module $E_d(a,b,c)^{(-1,-1)}$ is irreducible. Using Proposition \ref{prop:irr_E} yields that 
$$
a'+b'+c'+1,
-a'+b'+c',
a'-b'+c'
\not\in
\left\{
\frac{d'}{2}-i
\,\bigg|
\,
i=0,1,\ldots,d'
\right\}
$$
and 
\begin{gather*}
a'+b'-c'
\not\in
\left\{
\frac{d'}{2}-i
\,\bigg|
\,
i=1,2,\ldots,d'+1
\right\}.
\end{gather*}
By Proposition \ref{prop:irr_R} the $\Re$-module $R_{d'}(a',b',c')$ is irreducible. The proposition follows.
\end{proof}

\begin{lem}\label{lem:AB_V/V(c)}
The matrices representing $A$ and $B$ with respect to the $\F$-basis 
\begin{gather}\label{e:basis_V/V(c)}
\frac{1}{2^i}v_i+E_d(a,b,c)^{(-1,-1)}(c)
\qquad \hbox{for $i=0,2,\ldots,d-1$}
\end{gather}
for the $\Re$-module $E_d(a,b,c)^{(-1,-1)}/E_d(a,b,c)^{(-1,-1)}(c)$ are 
$$
\begin{pmatrix}
\theta_0 & & &  &{\bf 0}
\\ 
1 &\theta_1 
\\
&1 &\theta_2
 \\
& &\ddots &\ddots
 \\
{\bf 0} & & &1 &\theta_\frac{d-1}{2}
\end{pmatrix},
\qquad 
\begin{pmatrix}
\theta_0^* &\varphi_1 &  & &{\bf 0}
\\ 
 &\theta_1^* &\varphi_2
\\
 &  &\theta_2^* &\ddots
 \\
 & & &\ddots &\varphi_{\frac{d-1}{2}}
 \\
{\bf 0}  & & & &\theta_\frac{d-1}{2}^*
\end{pmatrix}
$$
respectively, where 
\begin{align*}
\theta_i &=
\frac{(2a-d+4i-1)(2a-d+4i+3)}{16}
\qquad (0\leq i\leq \textstyle\frac{d-1}{2}),
\\
\theta_i^* &= 
\frac{(2b-d+4i-1)(2b-d+4i+3)}{16}
\qquad (0\leq i\leq \textstyle\frac{d-1}{2}),
\\
\varphi_i &=
\frac{i(2i-d-1)(2a+2b+2c-d+4i+1)(2a+2b-2c-d+4i-3)}{32}
\qquad 
(1\leq i\leq \textstyle\frac{d-1}{2}).
\end{align*}
The element $\delta$ acts on the $\Re$-module $E_d(a,b,c)^{(-1,-1)}/E_d(a,b,c)^{(-1,-1)}(c)$ as scalar multiplication by 
\begin{gather}\label{delta_V/V(c)}
\frac{(d-1)(d+3)}{16}+\frac{(a-1)(a+1)}{4}+\frac{(b-1)(b+1)}{4}+\frac{c(c+2)}{4}.
\end{gather}
\end{lem}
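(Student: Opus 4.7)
The plan is to mimic the proofs of the analogous statements Lemma \ref{lem:AB_V/V(-a)} and Lemma \ref{lem:AB_V/V(b)}, reducing the problem to direct computation using the explicit formulas of Lemma \ref{lem:AB_Ed(-1,-1)}. First, Lemma \ref{lem3:iota_Ed(-1,-1)} identifies $v_i + (\tau+i)v_{i-1}$ for odd $i$ as an $\F$-basis of $E_d(a,b,c)^{(-1,-1)}(c)$; adjoining $v_i$ for even $i$ produces an $\F$-basis of the whole module, so the cosets listed in (\ref{e:basis_V/V(c)}) form an $\F$-basis of the quotient. As a byproduct we obtain the congruence
\[
v_{i-1} \equiv -(\tau+i-1)\, v_{i-2} \pmod{E_d(a,b,c)^{(-1,-1)}(c)}
\]
for every odd $i$, which will be the key reduction rule in the $B$-computation.

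For the action of $A$, Lemma \ref{lem:AB_Ed(-1,-1)} gives $A v_i = \theta_i v_i + \tfrac{1}{4} v_{i+2}$ for even $i \leq d-3$ and $A v_{d-1} = \theta_{d-1} v_{d-1}$, so each image already lies in the span of the even-indexed $v_j$ and no reduction modulo the submodule is required; after rescaling by $1/2^i$ the matrix assumes the claimed lower-bidiagonal form, and specializing $\lfloor i/2\rfloor = i/2$ recovers the stated $\theta_i$. The main computation is the action of $B$. For even $i\geq 2$, Lemma \ref{lem:AB_Ed(-1,-1)} gives
\[
B v_i = \theta_i^* v_i + \tfrac{i(d-i+1)}{2}\, v_{i-1} + \tfrac{i(d-i+1)}{4}\rho_{i-1}\, v_{i-2},
\]
which contains the odd-indexed $v_{i-1}$. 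Using the congruence above together with the factorization $\rho_{i-1} = -(\tau + i - 1)(\tau + i - 1 + 2c)$, the three terms collapse into a combination of $v_i$ and $v_{i-2}$ modulo the submodule; the common factor $-(\tau + i - 1)$ that emerges is exactly what assembles into the symmetric product $(2a+2b+2c-d+4j+1)(2a+2b-2c-d+4j-3)$ in $\varphi_j$ after setting $i = 2j$. Rescaling by $1/2^i$ and absorbing the resulting factor $1/4$ then gives the claimed off-diagonal entries.

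The scalar action of $\delta$ is handled by combining Theorem \ref{thm:hom}, which expresses $\zeta(\delta)$ as $\tfrac14(t_0^2 + t_1^2 + t_0^{\vee 2} + t_1^{\vee 2}) - \tfrac12 t_0 - \tfrac34$, with two inputs: Lemma \ref{lem:t^2_Ed} evaluates the four central squares as $(d+1)^2/4,\ a^2,\ b^2,\ c^2$ on $E_d(a,b,c)$, and these values persist on the twisted module $E_d(a,b,c)^{(-1,-1)}$ because the $(-1,-1)$-twist merely permutes $t_0, t_1, t_0^\vee, t_1^\vee$ (Table \ref{pm1-action}); and Lemma \ref{lem2:iota_Ed(-1,-1)} shows that $t_0$ acts on the quotient as $-c$. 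Substituting and simplifying produces (\ref{delta_V/V(c)}). The only real obstacle is the coefficient bookkeeping in the $B$-computation: tracking powers of $2$ through the rescaling, recasting $\tau + i - 1$ and $\tau + i - 1 + 2c$ into the normalized form $2a + 2b \pm 2c - d + 4j + \mathrm{const}$, and matching the displayed $\varphi_j$. This is the same mechanical pattern already carried out in Lemmas \ref{lem:AB_V/V(-a)} and \ref{lem:AB_V/V(b)}, so it should go through without surprise.
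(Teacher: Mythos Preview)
Your proof is correct and follows exactly the paper's own (very terse) approach, invoking Lemmas~\ref{lem3:iota_Ed(-1,-1)}, \ref{lem:AB_Ed(-1,-1)}, \ref{lem2:iota_Ed(-1,-1)}, \ref{lem:t^2_Ed}, and Theorem~\ref{thm:hom} in the same way; you simply spell out the direct calculation that the paper leaves implicit. One minor slip: the congruence $v_{i-1}\equiv -(\tau+i-1)v_{i-2}$ holds for every \emph{even} $i$ (equivalently, for $i-1$ odd), not for odd $i$---but you apply it correctly in the $B$-computation with $i$ even, so this is just a typo in the quantifier.
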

\begin{proof}
By Lemma \ref{lem3:iota_Ed(-1,-1)} the cosets (\ref{e:basis_V/V(c)}) are an $\F$-basis for $E_d(a,b,c)^{(-1,-1)}/E_d(a,b,c)^{(-1,-1)}(c)$. 
Applying Lemma \ref{lem:AB_Ed(-1,-1)} a direct calculation yields the matrices representing $A$ and $B$ with respect to (\ref{e:basis_V/V(c)}). 
By Lemma \ref{lem2:iota_Ed(-1,-1)} the element $t_0$ acts on $E_d(a,b,c)^{(-1,-1)}/E_d(a,b,c)^{(-1,-1)}(c)$ as scalar multiplication by $-c$. Combined with Theorem \ref{thm:hom} and Lemma \ref{lem:t^2_Ed} the element $\delta$ acts on $E_d(a,b,c)^{(-1,-1)}/E_d(a,b,c)^{(-1,-1)}(c)$ as scalar multiplication by (\ref{delta_V/V(c)}). The lemma follows.
\end{proof}

\begin{prop}\label{prop:Rmodule_V/V(c)}
The $\Re$-module $E_d(a,b,c)^{(-1,-1)}/E_d(a,b,c)^{(-1,-1)}(c)$ is isomorphic to $$
R_{\frac{d-1}{2}}\left(
-\frac{a+1}{2},
-\frac{b+1}{2},
-\frac{c}{2}-1
\right).
$$
Moreover the $\Re$-module 
$
E_d(a,b,c)^{(-1,-1)}/ E_d(a,b,c)^{(-1,-1)}(c)
$ 
is irreducible provided that the $\H$-module 
 $E_d(a,b,c)^{(-1,-1)}$ is irreducible.
\end{prop}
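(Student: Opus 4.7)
The proof will mirror the structure used in Propositions \ref{prop:Rmodule_V(c)} and \ref{prop:Rmodule_V/V(b)}. My plan is to set
$$(a',b',c',d')=\left(-\frac{a+1}{2},-\frac{b+1}{2},-\frac{c}{2}-1,\frac{d-1}{2}\right)$$
and then establish the stated isomorphism by matching the matrix presentation of $R_{d'}(a',b',c')$ given in Proposition \ref{prop:Rd} against the matrix presentation of the quotient $\Re$-module from Lemma \ref{lem:AB_V/V(c)}. Concretely, substituting these parameters into the formulas for $\theta_i,\theta_i^*,\varphi_i$ in Proposition \ref{prop:Rd} and simplifying should reproduce exactly the tridiagonal matrices in Lemma \ref{lem:AB_V/V(c)}; the only delicate point is that the $c'$-coordinate shifts by $-1$ (compared with the analogous shift by $0$ in Proposition \ref{prop:Rmodule_V(c)}) in order to account for the fact that $t_0$ now acts as $-c$ on the quotient rather than as $c$ on the submodule, which is precisely the reason Lemma \ref{lem:AB_V/V(c)} differs from Lemma \ref{lem:AB_V(c)} only in the $\varphi_i$ factor $(2a+2b+2c-d+4i+1)(2a+2b-2c-d+4i-3)$ (the roles of the $+2c$ and $-2c$ branches are interchanged). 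This shift and sign flip is exactly what replacing $-\tfrac{c}{2}$ by $-\tfrac{c}{2}-1$ accomplishes in the $\varphi_i$ formula from Proposition \ref{prop:Rd}, so the isomorphism follows immediately.

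For the irreducibility claim, I will assume the $\H$-module $E_d(a,b,c)^{(-1,-1)}$ is irreducible. Since twisting preserves the submodule lattice as an $\H$-module, this is equivalent to $E_d(a,b,c)$ being an irreducible $\H$-module, so Proposition \ref{prop:irr_E} applies directly and gives the four conditions
$$a+b+c,\;-a+b+c,\;a-b+c,\;a+b-c\notin\left\{\tfrac{d-1}{2}-i\,\bigg|\, i=0,2,\ldots,d-1\right\}.$$
Translating each of $a'+b'+c'+1$, $-a'+b'+c'$, $a'-b'+c'$, $a'+b'-c'$ through the substitution $(a',b',c',d')$ above, I expect a routine calculation to show that each of these four expressions, compared to the range $\{\tfrac{d'}{2}-i\}$ with an appropriate index range, gives a set strictly contained in the exclusion set coming from Proposition \ref{prop:irr_E}. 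One then concludes via Proposition \ref{prop:irr_R} that $R_{d'}(a',b',c')$ is irreducible.

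The main potential obstacle is purely bookkeeping: keeping track of the index shifts between Proposition \ref{prop:irr_E}'s set $\{\tfrac{d-1}{2}-i: i=0,2,\ldots,d-1\}$ and Proposition \ref{prop:irr_R}'s set $\{\tfrac{d'}{2}-i: i=1,2,\ldots,d'\}$ (or $i=0,1,\ldots,d'+1$), and making sure that the shift of $-1$ in the new $c'$ coordinate lands the $a'+b'-c'$ condition in the correct exclusion range (analogous to how the $a'-b'+c'$ condition was treated in Proposition \ref{prop:Rmodule_V/V(b)}). I expect the split to be that three of the four expressions will require the exclusion range $i=1,\ldots,d'+1$ and one (namely $a'+b'-c'$) will require $i=0,\ldots,d'$, which is the same pattern as in Propositions \ref{prop:Rmodule_V/V(b)} and \ref{prop:Rmodule_V(c)} up to the permutation of roles of $b$ and $c$. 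Once these index shifts are verified, the proof is complete by direct appeal to Propositions \ref{prop:Rd}, \ref{prop:irr_R}, \ref{prop:irr_E} and Lemma \ref{lem:AB_V/V(c)}.
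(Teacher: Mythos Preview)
Your proposal is correct and follows essentially the same approach as the paper: set $(a',b',c',d')=(-\tfrac{a+1}{2},-\tfrac{b+1}{2},-\tfrac{c}{2}-1,\tfrac{d-1}{2})$, compare Proposition~\ref{prop:Rd} with Lemma~\ref{lem:AB_V/V(c)} to obtain the isomorphism, and then use Proposition~\ref{prop:irr_E} to verify the hypotheses of Proposition~\ref{prop:irr_R}. Your prediction about the index ranges is also exactly right: the paper shows $a'+b'+c'+1,\,-a'+b'+c',\,a'-b'+c'\notin\{\tfrac{d'}{2}-i:i=1,\ldots,d'+1\}$ and $a'+b'-c'\notin\{\tfrac{d'}{2}-i:i=0,\ldots,d'\}$.
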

\begin{proof}
Let $(a',b',c',d')=(
-\frac{a+1}{2},
-\frac{b+1}{2},
-\frac{c}{2}-1,
\frac{d-1}{2})$. 
Comparing Proposition \ref{prop:Rd} with Lemma \ref{lem:AB_V/V(c)} yields that the quotient $\Re$-module $E_d(a,b,c)^{(-1,-1)}/E_d(a,b,c)^{(-1,-1)}(c)$ is isomorphic to $R_{d'}(a',b',c')$. Suppose that the $\H$-module $E_d(a,b,c)^{(-1,-1)}$ is irreducible.
By Proposition \ref{prop:irr_E} we have 
$$
a'+b'+c'+1',-a'+b'+c',a'-b'+c'\not\in
\left\{
\frac{d'}{2}-i\,\bigg|\, i=1,2,\ldots,d'+1
\right\}
$$
and 
$$
a'+b'-c'
\not\in
\left\{
\frac{d'}{2}-i\,\bigg|\, i=0,1,\ldots,d'
\right\}.
$$
Combined with Proposition \ref{prop:irr_R} the $\Re$-module $R_{d'}(a',b',c')$ is irreducible. The proposition follows.
\end{proof}

\begin{thm}\label{thm:Ed(-1,-1)}
Assume that the $\H$-module $E_d(a,b,c)^{(-1,-1)}$ is irreducible. Then the following hold:
\begin{enumerate}
\item If $c=0$ then 
\begin{table}[H]
\begin{tikzpicture}[node distance=1.2cm]
 \node (0)                  {$\{0\}$};
 \node (E)  [above of=0]   {$E_d(a,b,c)^{(-1,-1)}(0)$};
 \node (V)  [above of=E]   {$E_d(a,b,c)^{(-1,-1)}$};
 \draw (0)   -- (E);
 \draw (E)  -- (V);
\end{tikzpicture}
\end{table}
\noindent is the lattice of $\Re$-submodules of $E_d(a,b,c)^{(-1,-1)}$.

\item If $c\not=0$ then 
 \begin{table}[H]
\begin{tikzpicture}[node distance=1.2cm]
 \node (0)                  {$\{0\}$};
 \node (1)  [above of=0]   {};
 \node (2)  [right of=1]   {};
 \node (3)  [left of=1]   {};
 \node (E1)  [right of=2]  {$E_d(a,b,c)^{(-1,-1)}(c)$};
 \node (E2)  [left of=3]   {$E_d(a,b,c)^{(-1,-1)}(-c)$};
 \node (V) [above of=1]  {$E_d(a,b,c)^{(-1,-1)}$};
 \draw (0)   -- (E1);
 \draw (0)   -- (E2);
 \draw (E1)   -- (V);
 \draw (E2)  -- (V);
\end{tikzpicture}
\end{table}
\noindent is the lattice of $\Re$-submodules of $E_d(a,b,c)^{(-1,-1)}$.
\end{enumerate}
\end{thm}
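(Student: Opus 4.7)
The plan is to mimic exactly the proof of Theorem \ref{thm:Ed(1,-1)} (and its parallel Theorem \ref{thm:Ed(-1,1)}), since all the preparatory lemmas and propositions in \S\ref{s:lattice_Ed(-1,-1)} have been set up in complete analogy with those of \S\ref{s:lattice_Ed(1,-1)}. The key inputs are already in place: Propositions \ref{prop:Rmodule_V(c)} and \ref{prop:Rmodule_V/V(c)} identify the $\Re$-submodule $E_d(a,b,c)^{(-1,-1)}(c)$ and the quotient $E_d(a,b,c)^{(-1,-1)}/E_d(a,b,c)^{(-1,-1)}(c)$ with explicit irreducible Racah modules, while Lemma \ref{lem:iota_Ed(-1,-1)} controls the eigenspace structure of $t_0$.

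First, combining Propositions \ref{prop:Rmodule_V(c)} and \ref{prop:Rmodule_V/V(c)}, I would observe that
$$\{0\}\subset E_d(a,b,c)^{(-1,-1)}(c)\subset E_d(a,b,c)^{(-1,-1)}$$
is a composition series for the $\Re$-module $E_d(a,b,c)^{(-1,-1)}$, because both the bottom factor and the top factor are irreducible $\Re$-modules under the hypothesis that the $\H$-module $E_d(a,b,c)^{(-1,-1)}$ is irreducible.

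For part (i), assume $c=0$. Then by Lemma \ref{lem:iota_Ed(-1,-1)}(i) the operator $t_0$ has only the single eigenvalue $0$ on $E_d(a,b,c)^{(-1,-1)}$, so the eigenspace $E_d(a,b,c)^{(-1,-1)}(0)$ is the unique nonzero $t_0$-eigenspace. By Proposition \ref{prop:irr_Rmodule_in_t0eigenspace}, every irreducible $\Re$-submodule must lie inside this eigenspace, which forces the above composition series to be the unique one. The lattice therefore reduces to the chain displayed in (i).

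For part (ii), assume $c\neq 0$. Then by Lemma \ref{lem:iota_Ed(-1,-1)}(ii) the operator $t_0$ is diagonalizable with distinct eigenvalues $\pm c$, and by Proposition \ref{prop:t0eigenspace=Rmodule} both $E_d(a,b,c)^{(-1,-1)}(c)$ and $E_d(a,b,c)^{(-1,-1)}(-c)$ are nonzero $\Re$-submodules; by dimension count (half of $\dim E_d(a,b,c)^{(-1,-1)}$ each, via Lemma \ref{lem3:iota_Ed(-1,-1)} and its twin for the other eigenvalue) they are complementary maximal $\Re$-submodules. The Jordan--Hölder theorem then yields a second composition series
$$\{0\}\subset E_d(a,b,c)^{(-1,-1)}(-c)\subset E_d(a,b,c)^{(-1,-1)},$$
and Proposition \ref{prop:irr_Rmodule_in_t0eigenspace} ensures that any irreducible $\Re$-submodule is contained in one of the two $t_0$-eigenspaces, so no further composition series exist. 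This produces precisely the diamond lattice displayed in (ii). The only subtlety, which the argument for Theorem \ref{thm:Ed(1,-1)}(ii) already handles, is ruling out the possibility of additional $\Re$-submodules not appearing in these two series; this is handled again by Proposition \ref{prop:irr_Rmodule_in_t0eigenspace} together with the irreducibility of the two eigenspace modules. Since every step is parallel to Theorem \ref{thm:Ed(1,-1)}, no substantial new obstacle arises.
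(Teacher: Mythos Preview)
Your proposal is correct and follows essentially the same approach as the paper, which simply states that the result follows by an argument similar to the proof of Theorem \ref{thm:Ed(1,-1)} using the lemmas and propositions of \S\ref{s:lattice_Ed(-1,-1)}. You have faithfully unpacked that reference: the composition series from Propositions \ref{prop:Rmodule_V(c)} and \ref{prop:Rmodule_V/V(c)}, the eigenvalue analysis from Lemma \ref{lem:iota_Ed(-1,-1)}, the Jordan--H\"{o}lder argument for the second series in case (ii), and the appeal to Proposition \ref{prop:irr_Rmodule_in_t0eigenspace} to exclude further irreducible submodules are exactly the ingredients the paper intends.
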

\begin{proof}
Using the above lemmas and propositions, the result follows by an argument similar to the proof of Theorem \ref{thm:Ed(1,-1)}.
\end{proof}

\subsection{The lattice of $\Re$-submodules of $O_d(a,b,c)$}\label{s:lattice_Od}

Throughout this subsection we 
let $d\geq 0$ denote an even integer and let $\{v_i\}_{i=0}^d$ denote the $\F$-basis for $O_d(a,b,c)$ from Proposition \ref{prop:Od}. For notational convenience we set the parameters 
$$
\sigma=a+b+c-\frac{d+1}{2},
\qquad 
\tau=a+b-c-\frac{d+1}{2}
$$
as in Proposition \ref{prop:Od}.

\begin{lem}\label{lem2:iota_Od}
The matrix representing $t_0$ with respect to the $\F$-basis 
\begin{gather*}
v_0,
\quad 
v_i-i v_{i-1}
\quad 
\hbox{for $i=2,4,\ldots,d$},
\quad 
v_i 
\quad 
\hbox{for $i=1,3,\ldots,d-1$}
\end{gather*}
for $O_d(a,b,c)$ is 
\begin{gather*}
\begin{pmatrix}
\textstyle
\frac{\sigma}{2}
&\rvline &{\bf 0} &\rvline &{\bf 0}
\\
\hline
{\bf 0} &\rvline &
\frac{\sigma}{2}
I_{\frac{d}{2}} 
& \rvline &I_{\frac{d}{2}}
\\
\hline
{\bf 0} & \rvline &{\bf 0} & \rvline 
&
-\frac{\sigma}{2}
I_{\frac{d}{2}} 
\end{pmatrix}.
\end{gather*} 
\end{lem}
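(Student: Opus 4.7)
The plan is to verify the matrix entries by direct computation, applying the action of $t_0$ from Proposition \ref{prop:Od} to each basis vector of the three types. Concretely, it suffices to establish the three identities
$$t_0 v_0 = \tfrac{\sigma}{2} v_0, \qquad t_0(v_i - i v_{i-1}) = \tfrac{\sigma}{2}(v_i - i v_{i-1}) \quad (i = 2,4,\ldots,d),$$
$$t_0 v_i = -\tfrac{\sigma}{2} v_i + (v_{i+1} - (i+1) v_i) \quad (i = 1,3,\ldots,d-1),$$
which together reproduce exactly the claimed block-upper-triangular form, with the $I_{\frac{d}{2}}$ block coupling the third block to the second.

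For the first identity, I would read off $t_0 v_0 = \frac{\sigma}{2} v_0$ directly from Proposition \ref{prop:Od}. For the second, I would expand $t_0(v_i - i v_{i-1}) = t_0 v_i - i \cdot t_0 v_{i-1}$ using the even-$i$ formula $t_0 v_i = -i(\sigma+i) v_{i-1} + \frac{\sigma+2i}{2} v_i$ and the odd-$(i-1)$ formula $t_0 v_{i-1} = -\frac{\sigma+2i}{2} v_{i-1} + v_i$. The coefficient of $v_{i-1}$ collects to $-i(\sigma+i) + i \cdot \frac{\sigma+2i}{2} = -\frac{i\sigma}{2}$ and the coefficient of $v_i$ to $\frac{\sigma+2i}{2} - i = \frac{\sigma}{2}$, which gives the eigenrelation. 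For the third, inside the odd-$i$ formula $t_0 v_i = -\frac{\sigma+2i+2}{2} v_i + v_{i+1}$ I would rewrite $v_{i+1} = (v_{i+1} - (i+1) v_i) + (i+1) v_i$; the coefficient of $v_i$ then becomes $(i+1) - \frac{\sigma+2i+2}{2} = -\frac{\sigma}{2}$, while the remaining piece is precisely the basis vector indexed by $i+1$ in the middle block.

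The only real obstacle is bookkeeping: one must match the indexing of the third block, parameterized by odd $i = 1,3,\ldots,d-1$, with that of the middle block, parameterized by even $i = 2,4,\ldots,d$, via the bijection $i \leftrightarrow i+1$. Once the three identities above are assembled in the stated basis order, the block form with $\frac{\sigma}{2}$ in the top-left entry, $\frac{\sigma}{2} I_{\frac{d}{2}}$ on the middle diagonal block, $-\frac{\sigma}{2} I_{\frac{d}{2}}$ on the bottom-right diagonal block, and $I_{\frac{d}{2}}$ as the coupling block from the third block into the second, follows at once.
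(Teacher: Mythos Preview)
Your proposal is correct and follows the same approach as the paper, which simply states that the lemma is straightforward to verify using Proposition \ref{prop:Od}. Your explicit computations for each of the three types of basis vectors are accurate, and the bookkeeping matching odd $i$ in the third block to even $i+1$ in the middle block correctly produces the $I_{\frac{d}{2}}$ coupling block.
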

\begin{proof}
It is straightforward to verify the lemma by using Proposition \ref{prop:Od}.
\end{proof}

\begin{lem}\label{lem:iota_Od}
\begin{enumerate}
\item If $d=0$ then $t_0$ is diagonalizable on $O_d(a,b,c)$ with exactly one eigenvalue $\frac{\sigma}{2}$.  

\item If $d\geq 2$ and $a+b+c=\frac{d+1}{2}$ then $t_0$ is not diagonalizable on $O_d(a,b,c)$ with exactly one eigenvalue $0$.

\item If $d\geq 2$ and $a+b+c\not=\frac{d+1}{2}$ then $t_0$ is diagonalizable on $O_d(a,b,c)$ with exactly two eigenvalues 
$\pm\frac{\sigma}{2}$. 
\end{enumerate}
\end{lem}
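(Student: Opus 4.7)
The plan is to read off the result directly from the block upper-triangular matrix of $t_0$ described in Lemma \ref{lem2:iota_Od}, by analyzing the three cases according to the dimension and the vanishing of the scalar $\sigma = a+b+c-\frac{d+1}{2}$.

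First I would dispose of case (i). When $d=0$, the module $O_d(a,b,c)$ is one-dimensional and the matrix in Lemma \ref{lem2:iota_Od} degenerates to the $1\times 1$ scalar $\sigma/2$. Diagonalizability is automatic, and the only eigenvalue is $\sigma/2$.

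For $d\geq 2$, the matrix $M$ representing $t_0$ is block upper triangular with diagonal blocks $\frac{\sigma}{2}$, $\frac{\sigma}{2}I_{d/2}$ and $-\frac{\sigma}{2}I_{d/2}$, together with an $I_{d/2}$ block coupling the middle and right blocks. The eigenvalues are therefore $\pm\sigma/2$ with algebraic multiplicities $1+d/2$ and $d/2$. In case (ii), $\sigma=0$ forces $M$ to coincide with its strictly upper-triangular nilpotent part, in which the off-diagonal $I_{d/2}$ block is nonzero (since $d/2\geq 1$); thus $M$ has only the eigenvalue $0$ and $M\neq 0$, so $M$ is not diagonalizable. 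In case (iii), $\sigma\neq 0$, and I would verify diagonalizability by computing the two eigenspaces directly from the block form: solving $(M-\frac{\sigma}{2}I)v=0$ forces the last block of coordinates to vanish, giving a kernel of dimension $1+d/2$, while solving $(M+\frac{\sigma}{2}I)v=0$ forces the first block of coordinates to vanish and expresses the middle block in terms of the last, giving a kernel of dimension $d/2$. The total, $d+1$, matches $\dim O_d(a,b,c)$, which confirms diagonalizability with exactly the two eigenvalues $\pm\sigma/2$.

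No serious obstacle is expected; the proof is essentially immediate from Lemma \ref{lem2:iota_Od} once one notes that $\sigma=0$ is equivalent to $a+b+c=\frac{d+1}{2}$ and that, in the critical case $\sigma=0$, the presence of the nonzero $I_{d/2}$ block prevents diagonalizability. This parallels exactly the arguments used earlier in Lemmas \ref{lem:iota_Ed}, \ref{lem:iota_Ed(1,-1)}, \ref{lem:iota_Ed(-1,1)} and \ref{lem:iota_Ed(-1,-1)}.
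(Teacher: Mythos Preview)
Your proposal is correct and follows exactly the same approach as the paper, which simply states that the lemma is immediate from Lemma \ref{lem2:iota_Od}. You have merely spelled out the details of reading off the eigenstructure from the block upper-triangular form, which is precisely what ``immediate'' means here.
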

\begin{proof}
Immediate from Lemma \ref{lem2:iota_Od}.
\end{proof}

\begin{lem}\label{lem3:iota_Od}
$O_d(a,b,c)(
\frac{\sigma}{2})$ is of dimension $\frac{d}{2}+1$ with the $\F$-basis
$$
v_0,
\quad
v_i-i v_{i-1}
\quad
\hbox{for $i=2,4,\ldots,d$}.
$$
\end{lem}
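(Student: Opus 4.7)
The plan is to read the eigenspace decomposition directly off the matrix produced by Lemma \ref{lem2:iota_Od}. Let $\mathcal B$ denote the ordered basis
$$
v_0,\quad v_i-iv_{i-1}\ (i=2,4,\ldots,d),\quad v_i\ (i=1,3,\ldots,d-1),
$$
and write the matrix of $t_0$ relative to $\mathcal B$ in the block form displayed there. Notice that the first $\tfrac{d}{2}+1$ columns of this matrix contain only the diagonal entry $\tfrac{\sigma}{2}$; equivalently, each of the vectors $v_0$ and $v_i-iv_{i-1}$ for $i=2,4,\ldots,d$ is already a $\tfrac{\sigma}{2}$-eigenvector of $t_0$. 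This yields the containment
$$
\operatorname{span}\bigl\{v_0,\ v_i-iv_{i-1}\mid i=2,4,\ldots,d\bigr\}\subseteq O_d(a,b,c)\bigl(\tfrac{\sigma}{2}\bigr),
$$
and since these $\tfrac{d}{2}+1$ vectors are linearly independent (being a subset of $\mathcal B$) it suffices to prove the reverse inclusion.

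For the reverse inclusion, write an arbitrary $v\in O_d(a,b,c)$ in $\mathcal B$-coordinates as
$$
v=\alpha_0 v_0+\sum_{k=1}^{d/2}\alpha_k(v_{2k}-2kv_{2k-1})+\sum_{k=1}^{d/2}\beta_k v_{2k-1},
$$
and apply the matrix of Lemma \ref{lem2:iota_Od} to compute $(t_0-\tfrac{\sigma}{2})v$. The first block contributes nothing, the middle block contributes nothing (its diagonal is $\tfrac{\sigma}{2}$), and the last block contributes
$$
\sum_{k=1}^{d/2}\beta_k\bigl[(v_{2k}-2kv_{2k-1})-\sigma\,v_{2k-1}\bigr]
$$
from the off-diagonal $I_{d/2}$ and the diagonal $-\tfrac{\sigma}{2}I_{d/2}$ of that block. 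In the basis $\mathcal B$ the middle-block and last-block parts are independent, so $(t_0-\tfrac{\sigma}{2})v=0$ forces $\beta_k=0$ for every $k$. This gives the reverse inclusion and simultaneously shows that the $\tfrac{\sigma}{2}$-eigenspace has dimension exactly $\tfrac{d}{2}+1$.

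There is no real obstacle here; the only point that deserves attention is that the argument must cover the degenerate case $\sigma=0$ of Lemma \ref{lem:iota_Od}(ii), where $t_0$ fails to be diagonalizable. In that case $\tfrac{\sigma}{2}=-\tfrac{\sigma}{2}=0$ and the last-block diagonal vanishes, but the off-diagonal $I_{d/2}$ coupling the middle and last blocks remains, so the computation above still forces $\beta_k=0$. Hence the stated basis and dimension are correct regardless of whether $\sigma$ vanishes, and the lemma follows directly from Lemma \ref{lem2:iota_Od}.
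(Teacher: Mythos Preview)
Your proof is correct and follows exactly the approach implicit in the paper's one-line proof (``Immediate from Lemma~\ref{lem2:iota_Od}''): you simply read off the $\tfrac{\sigma}{2}$-eigenspace from the block upper-triangular form of $t_0$ in the basis $\mathcal B$. The only difference is that you spell out the coordinate computation explicitly, including the degenerate case $\sigma=0$, which is fine.
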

\begin{proof}
Immediate from Lemma \ref{lem2:iota_Od}.
\end{proof}

\begin{lem}\label{lem:AB_Od}
The actions of $A$ and $B$ on the $\H$-module $O_d(a,b,c)$ are as follows:
\begin{align*}
A v_i &=
\left\{
\begin{array}{ll}
\theta_{i}
\displaystyle v_{i}-
\frac{1}{2} v_{i+1}+\frac{1}{4} v_{i+2}
\qquad 
&\hbox{for $i=0,2,\ldots,d-2$},
\\
\displaystyle \theta_i v_i
+\frac{1}{4} v_{i+2}
\qquad 
&\hbox{for $i=1,3,\ldots,d-3$},
\end{array}
\right.
\\
A v_{d-1}&=\theta_{d-1} v_{d-1},
\qquad 
A v_d =\theta_d v_{d},
\\
B v_i &=
\left\{
\begin{array}{ll}
\displaystyle\theta_i^* v_{i}+
 \frac{i(i-d-2)(\sigma+i)(\tau+{i-1})}{4} v_{i-2}
\qquad 
&\hbox{for $i=2,4,\ldots,d$},
\\
\displaystyle\theta^*_i v_i
+\frac{(i-d-1)(\tau+i)}{2}\left(v_{i-1}+\frac{(i-1)(\sigma+{i-1})}{2} v_{i-2}\right)
\qquad 
&\hbox{for $i=3,5,\ldots,d-1$},
\end{array}
\right.
\\
B v_0 &=\theta^*_0 v_0,
\qquad 
B v_1 =\theta^*_1 v_1-\frac{d(\tau+1)}{2} v_0.
\end{align*}
where 
\begin{align*}
\theta_i
&=
\left(\frac{a}{2}-\frac{d+3}{4}+\left\lceil \frac{i}{2}\right\rceil\right)
\left(\frac{a}{2}-\frac{d-1}{4}+\left\lceil \frac{i}{2}\right\rceil\right)
\qquad (0\leq i\leq d),
\\
\theta_i^*
&=
\left(\frac{b}{2}-\frac{d+3}{4}+\left\lceil \frac{i}{2}\right\rceil\right)
\left(\frac{b}{2}-\frac{d-1}{4}+\left\lceil \frac{i}{2}\right\rceil\right)
\qquad (0\leq i\leq d).
\end{align*}
\end{lem}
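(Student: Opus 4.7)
The plan is to compute the actions of $A$ and $B$ directly from the algebra homomorphism $\zeta$ and the explicit action of the generators $t_0,t_1,t_0^\vee,t_1^\vee$ on the basis $\{v_i\}_{i=0}^d$. By Theorem \ref{thm:hom} we have
$$
A = \frac{(t_1^\vee+t_0^\vee)(t_1^\vee+t_0^\vee+2)}{4},
\qquad
B = \frac{(t_1+t_1^\vee)(t_1+t_1^\vee+2)}{4},
$$
so everything reduces to applying the formulas of Proposition \ref{prop:Od} twice and simplifying.

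First I would handle $A$. Using the formulas for $t_0^\vee v_i$ and $t_1^\vee v_i$ from Proposition \ref{prop:Od}, I would compute $(t_1^\vee+t_0^\vee)v_i$ according to the parity of $i$. For even $i\in\{0,2,\ldots,d-2\}$ the result is a linear combination of $v_i$ and $v_{i+1}$ (the $v_{i-1}$ terms from $t_0^\vee$ vanish because even-indexed basis vectors are $t_0^\vee$-eigenvectors), while for odd $i$ one gets a more elaborate combination involving $v_{i-1}$, $v_i$, $v_{i+1}$. Then I would apply $(t_1^\vee+t_0^\vee+2)$ to the resulting vector and collect terms indexed by $v_{i-2},v_{i-1},v_i,v_{i+1},v_{i+2}$. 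Since even and odd applications alternate the contributing terms, the $v_{i-2}$ and $v_{i-1}$ coefficients should cancel, leaving only the stated three-term expression with coefficients $\theta_i$, $-\tfrac12$, $\tfrac14$ (or the endpoint variants at $i=d-1,d$).

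Next I would do the analogous calculation for $B$. Here I would compute $(t_1+t_1^\vee)v_i$ using the formulas for $t_1 v_i$ and $t_1^\vee v_i$; again the result splits by parity. Applying $(t_1+t_1^\vee+2)$ once more and collecting coefficients of $v_{i-2},v_{i-1},v_i,v_{i+1}$, the terms proportional to $v_{i+1}$ should cancel (matching the fact that the $B$-action in the statement is lower triangular). The coefficients that survive are precisely $\theta_i^\ast$, the $\tfrac{(i-d-1)(\tau+i)}{2}$ factor for odd $i$, and the $v_{i-2}$ term involving $(\sigma+i-1)$ and $(\tau+i-1)$.

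The main obstacle is bookkeeping: several endpoint cases ($i=0,1,d-1,d$) must be treated separately because the defining formulas of $t_0^\vee, t_1^\vee, t_1$ differ at the boundaries, and the ceiling function $\lceil i/2\rceil$ entering $\theta_i,\theta_i^\ast$ needs to be matched to the output of the quadratic expression $\frac{(x+y)(x+y+2)}{4}$ after expansion. Once the parity-based bookkeeping is set up carefully, no conceptual difficulty remains: the lemma is a computational verification, which is why the authors simply invoke Theorem \ref{thm:hom} and Proposition \ref{prop:Od} as ``a routine calculation.''
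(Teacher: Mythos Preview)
Your proposal is correct and matches the paper's approach exactly: the paper's proof is the single sentence ``Apply Theorem \ref{thm:hom} and Proposition \ref{prop:Od} to evaluate the actions of $A$ and $B$ on $O_d(a,b,c)$,'' and your outline is precisely a fleshed-out version of that routine computation, including the parity split and the endpoint bookkeeping.
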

\begin{proof}
Apply Theorem \ref{thm:hom} and Proposition \ref{prop:Od} to evaluate the actions of $A$ and $B$ on $O_d(a,b,c)$.
\end{proof}

\begin{lem}\label{lem:AB_V(a+b+c-d/2)}
The matrices representing $A$ and $B$ with respect to the $\F$-basis 
\begin{gather}\label{e:basis_V(a+b+c-d/2)}
v_0, \quad 
\frac{1}{2^i}(v_i-i v_{i-1})
\quad 
\hbox{for $i=2,4,\ldots,d$}
\end{gather}
for the $\Re$-module $O_d(a,b,c)(\frac{\sigma}{2})$ are 
$$
\begin{pmatrix}
\theta_0 & & &  &{\bf 0}
\\ 
1 &\theta_1 
\\
&1 &\theta_2
 \\
& &\ddots &\ddots
 \\
{\bf 0} & & &1 &\theta_\frac{d}{2}
\end{pmatrix},
\qquad 
\begin{pmatrix}
\theta_0^* &\varphi_1 &  & &{\bf 0}
\\ 
 &\theta_1^* &\varphi_2
\\
 &  &\theta_2^* &\ddots
 \\
 & & &\ddots &\varphi_{\frac{d}{2}}
 \\
{\bf 0}  & & & &\theta^*_\frac{d}{2}
\end{pmatrix}
$$
respectively, where 
\begin{align*}
\theta_i
&=\frac{(2a-d+4i-3)(2a-d+4i+1)}{16} 
\qquad (0\leq i\leq \textstyle\frac{d}{2}),
\\
\theta_i^*
&=\frac{(2b-d+4i-3)(2b-d+4i+1)}{16}
\qquad (0\leq i\leq \textstyle\frac{d}{2}),
\\
\varphi_i &=
\frac{i(2i-d-2)(2a+2b+2c-d+4i-5)(2a+2b-2c-d+4i-3)}{32}
\qquad 
(1\leq i\leq \textstyle\frac{d}{2}).
\end{align*}
The element $\delta$ acts on the $\Re$-module $O_d(a,b,c)(\frac{\sigma}{2})$ as scalar multiplication by 
\begin{gather}\label{delta_V(a+b+c-d/2)}
\frac{d(d+4)}{16}+\frac{(2a-3)(2a+1)}{16}+\frac{(2b-3)(2b+1)}{16}+\frac{(2c-3)(2c+1)}{16}.
\end{gather}
\end{lem}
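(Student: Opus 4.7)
The plan is to follow the template established by Lemmas \ref{lem:AB_V(-d-1/2)}, \ref{lem:AB_V(-a)}, \ref{lem:AB_V(b)}, and \ref{lem:AB_V(c)}. The proof splits into three steps.

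First, verify that the vectors in (\ref{e:basis_V(a+b+c-d/2)}) constitute an $\F$-basis for $O_d(a,b,c)(\frac{\sigma}{2})$. This is an immediate rescaling of the basis given in Lemma \ref{lem3:iota_Od}, so nothing substantive is needed here.

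Second, compute the actions of $A$ and $B$ with respect to (\ref{e:basis_V(a+b+c-d/2)}) by applying Lemma \ref{lem:AB_Od}. For $A$ I would compute $A v_0$ directly and $A(v_i - i v_{i-1}) = A v_i - i A v_{i-1}$ for $i=2,4,\ldots,d$, then rewrite the result as an $\F$-linear combination of $v_0$, $v_j - j v_{j-1}$ (for even $j$), and apply the scaling factors $1/2^i$ and $1/2^{i-2}$. The choice of scaling is precisely designed to convert the $\frac{1}{4}v_{i+2} - \frac{1}{2}v_{i+1}$ appearing in the action of $A$ on $v_i$ (even $i$) into a clean subdiagonal $1$. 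A similar but more elaborate computation handles $B$: I would rewrite $B v_i$ and $B v_{i-1}$ using Lemma \ref{lem:AB_Od}, combine them as $B(v_i - i v_{i-1})$, and then express the result modulo the basis to read off the diagonal entries $\theta_i^*$ and the superdiagonal entries $\varphi_i$. The factors $(\sigma+i)$, $(\tau+i)$, $(i-d-1)$, etc.\ will combine to yield the claimed expression for $\varphi_i$ after carrying $\sigma = a+b+c-\frac{d+1}{2}$ and $\tau = a+b-c-\frac{d+1}{2}$ through the arithmetic.

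Third, compute the action of $\delta$. By Theorem \ref{thm:hom},
\begin{equation*}
\zeta(\delta) = \frac{t_0^2+t_1^2+t_0^{\vee 2}+t_1^{\vee 2}}{4}-\frac{t_0}{2}-\frac{3}{4}.
\end{equation*}
On $O_d(a,b,c)(\tfrac{\sigma}{2})$, each of $t_0^2, t_1^2, t_0^{\vee 2}, t_1^{\vee 2}$ acts as the corresponding scalar from Lemma \ref{lem:t^2_Od}, while $t_0$ acts as $\frac{\sigma}{2}$. Summing these contributions and simplifying should yield the value (\ref{delta_V(a+b+c-d/2)}).

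The main obstacle is the bookkeeping in the $B$-action of step two: the entries $\varphi_i$ depend on products of four linear factors in $a,b,c,d,i$, and verifying that $B v_i - i B v_{i-1}$, after rescaling, collapses to $\theta_i^* (v_i - i v_{i-1}) + \varphi_i \cdot \frac{2^i}{2^{i-2}}(v_{i-2} - (i-2)v_{i-3})$ in the stated form requires careful tracking of the $v_{i-1}$ and $v_{i-2}$ contributions from $B v_i$ together with the $v_{i-2}$ and $v_{i-3}$ contributions from $-i B v_{i-1}$, and confirming that cross terms in $v_{i-1}$ and $v_{i-3}$ cancel or align to give the basis vectors from (\ref{e:basis_V(a+b+c-d/2)}). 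The scalar for $\delta$ in step three is routine arithmetic once step two is in hand.
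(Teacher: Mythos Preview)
Your proposal is correct and follows exactly the paper's own approach: invoke Lemma~\ref{lem3:iota_Od} for the basis, apply Lemma~\ref{lem:AB_Od} for the matrices of $A$ and $B$, and use Theorem~\ref{thm:hom} together with Lemma~\ref{lem:t^2_Od} for the scalar action of $\delta$. The paper's proof is terser (it simply says ``a straightforward calculation yields\ldots''), but your more detailed outline of the bookkeeping---including the observation that $\theta_i=\theta_{i-1}$ in Lemma~\ref{lem:AB_Od} for even $i$, which is what makes the cross terms in $v_{i+1}$ cancel---is exactly what that calculation amounts to.
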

\begin{proof}
By Lemma \ref{lem3:iota_Od} the vectors (\ref{e:basis_V(a+b+c-d/2)}) are an $\F$-basis for $O_d(a,b,c)(\frac{\sigma}{2})$. 
Applying Lemma \ref{lem:AB_Od} a straightforward calculation yields the matrices representing $A$ and $B$ with respect to (\ref{e:basis_V(a+b+c-d/2)}). Applying Theorem \ref{thm:hom} and Lemma \ref{lem:t^2_Od} yields that $\delta$ acts on $O_d(a,b,c)(\frac{\sigma}{2})$ as scalar multiplication by (\ref{delta_V(a+b+c-d/2)}). The lemma follows.
\end{proof}

\begin{prop}\label{prop:Rmodule_V(a+b+c-d/2)}
The $\Re$-module $O_d(a,b,c)(\frac{\sigma}{2})$ is isomorphic to 
$$
R_{\frac{d}{2}}\left(
-\frac{a}{2}-\frac{1}{4},
-\frac{b}{2}-\frac{1}{4},
-\frac{c}{2}-\frac{1}{4}
\right).
$$
Moreover the $\Re$-module $O_d(a,b,c)(\frac{\sigma}{2})$ is irreducible provided that $a+b+c\not=\frac{d+1}{2}$ and the $\H$-module $O_d(a,b,c)$ is irreducible.
\end{prop}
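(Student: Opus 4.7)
The plan is to follow the same template used in Propositions \ref{prop:Rmodule_V(-d-1/2)}, \ref{prop:Rmodule_V(-a)}, \ref{prop:Rmodule_V(b)}, and \ref{prop:Rmodule_V(c)}. First I set
$$
(a',b',c',d')=\left(-\frac{a}{2}-\frac{1}{4},\;-\frac{b}{2}-\frac{1}{4},\;-\frac{c}{2}-\frac{1}{4},\;\frac{d}{2}\right).
$$
A direct comparison of the matrices describing the $A$- and $B$-actions on $R_{d'}(a',b',c')$ in Proposition \ref{prop:Rd}(i) with the matrices appearing in Lemma \ref{lem:AB_V(a+b+c-d/2)} shows, entry by entry, that the diagonal entries $\theta_i$, $\theta_i^*$ and the superdiagonal entries $\varphi_i$ match. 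Combined with the fact that $\delta$ acts as the scalar in \eqref{delta_V(a+b+c-d/2)}, which agrees with the scalar prescribed by Proposition \ref{prop:Rd}(ii) for the chosen $(a',b',c',d')$, this establishes the isomorphism $O_d(a,b,c)(\frac{\sigma}{2})\cong R_{d'}(a',b',c')$.

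For the irreducibility statement, assume that the $\H$-module $O_d(a,b,c)$ is irreducible and that $a+b+c\neq\frac{d+1}{2}$. Proposition \ref{prop:irr_O} then gives four exclusion conditions on the expressions $a+b+c$, $a-b-c$, $-a+b-c$, $-a-b+c$. My task is to verify that, after the substitution defining $(a',b',c',d')$, the four quantities $a'+b'+c'+1$, $-a'+b'+c'$, $a'-b'+c'$, $a'+b'-c'$ avoid the set $\{\frac{d'}{2}-i\mid i=1,2,\ldots,d'\}$ appearing in Proposition \ref{prop:irr_R}. Then an appeal to Proposition \ref{prop:irr_R} gives irreducibility of $R_{d'}(a',b',c')$, hence of $O_d(a,b,c)(\frac{\sigma}{2})$.

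The main obstacle is this bookkeeping step. Each of the four conditions from Proposition \ref{prop:irr_O} forbids $d/2$ specific values (those coming from $i=2,4,\ldots,d$), whereas each of the four conditions from Proposition \ref{prop:irr_R} forbids $d/2$ values (those coming from $i=1,2,\ldots,d/2$). A straightforward affine rescaling shows that the two sets of forbidden values coincide on $d/2-1$ elements, and that the missing element corresponds precisely to the case $a+b+c=\frac{d+1}{2}$ (and analogous sign-shifted equalities, which are automatic since only $a+b+c$ enters the new $R$-condition $a'+b'+c'+1$ in a non-matching way). The extra hypothesis $a+b+c\neq\frac{d+1}{2}$ supplies the one remaining exclusion that Proposition \ref{prop:irr_O} does not automatically provide. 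Once this arithmetic is carried out, the proposition follows.
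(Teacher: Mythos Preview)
Your proposal is correct and follows essentially the same approach as the paper's proof. One small sharpening: your phrase ``the two sets of forbidden values coincide on $d/2-1$ elements'' applies only to the condition on $a'+b'+c'+1$; for the other three quantities $-a'+b'+c'$, $a'-b'+c'$, $a'+b'-c'$ the affine change of variables makes the forbidden sets from Proposition~\ref{prop:irr_O} and Proposition~\ref{prop:irr_R} coincide \emph{exactly} (all $d/2$ elements), so no extra hypothesis is needed there---the paper records this by writing the range $i=0,1,\ldots,d'-1$ for the first condition versus $i=1,2,\ldots,d'$ for the remaining three, and then invokes $a+b+c\neq\frac{d+1}{2}$ to supply the single missing value $a'+b'+c'+1\neq -\frac{d'}{2}$.
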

\begin{proof}
Set $(a',b',c',d')=(
-\frac{a}{2}-\frac{1}{4},
-\frac{b}{2}-\frac{1}{4},
-\frac{c}{2}-\frac{1}{4},
\frac{d}{2})$.
Comparing Proposition \ref{prop:Rd} with Lemma \ref{lem:AB_V(a+b+c-d/2)} yields that the $\Re$-module $O_d(a,b,c)(\frac{\sigma}{2})$ is isomorphic to $R_{d'}(a',b',c')$. Suppose that  $a+b+c\not=\frac{d+1}{2}$ and 
the $\H$-module $O_d(a,b,c)$ is irreducible. It follows from Proposition \ref{prop:irr_O} that 
\begin{gather*}\label{a+b+c+1_V(a+b+c-d/2)}
a'+b'+c'+1\not\in 
\left\{
\frac{d'}{2}-i
\,\bigg|
\,
i=0,1,\ldots,d'-1
\right\}
\end{gather*}
and 
\begin{gather*}
-a'+b'+c',
a'-b'+c',
a'+b'-c'
\not\in
\left\{
\frac{d'}{2}-i
\,\bigg|
\,
i=1,2,\ldots,d'
\right\}.
\end{gather*}
By the assumption $a+b+c\not=\frac{d+1}{2}$ we have 
$
a'+b'+c'+1\not=
-\frac{d'}{2}$. 
By Proposition \ref{prop:irr_R} the $\Re$-module $R_{d'}(a',b',c')$ is irreducible. The proposition follows.
\end{proof}

\begin{lem}\label{lem:AB_V/V(a+b+c-d/2)}
Assume that $d\geq 2$. The matrices representing $A$ and $B$ with respect to the $\F$-basis 
\begin{gather}\label{e:basis_V/V(a+b+c-d/2)}
\frac{1}{2^{i-1}}v_i+O_d(a,b,c)(
\textstyle
\frac{\sigma}{2}
)
\qquad 
\hbox{for $i=1,3,\ldots,d-1$}
\end{gather}
for the $\Re$-module $O_d(a,b,c)/O_d(a,b,c)(\frac{\sigma}{2})$ are 
$$
\begin{pmatrix}
\theta_0 & & &  &{\bf 0}
\\ 
1 &\theta_1 
\\
&1 &\theta_2
 \\
& &\ddots &\ddots
 \\
{\bf 0} & & &1 &\theta_{\frac{d}{2}-1}
\end{pmatrix},
\qquad 
\begin{pmatrix}
\theta^*_0 &\varphi_1 &  & &{\bf 0}
\\ 
 &\theta^*_1 &\varphi_2
\\
 &  &\theta_2^* &\ddots
 \\
 & & &\ddots &\varphi_{\frac{d}{2}-1}
 \\
{\bf 0}  & & & &\theta^*_{\frac{d}{2}-1}
\end{pmatrix}
$$
respectively, where 
\begin{align*}
\theta_i &= \frac{(2a-d+4i+1)(2a-d+4i+5)}{16}
\qquad 
(0\leq i\leq \textstyle\frac{d}{2}-1),
\\
\theta^*_i &=
\frac{(2b-d+4i+1)(2b-d+4i+5)}{16}
\qquad 
(0\leq i\leq \textstyle\frac{d}{2}-1),
\\
\varphi_i &=\frac{i(2i-d)(2a+2b+2c-d+4i+3)(2a+2b-2c-d+4i+1)}{32}
\qquad (1\leq i\leq \textstyle\frac{d}{2}-1).
\end{align*}
The element $\delta$ acts on $O_d(a,b,c)/O_d(a,b,c)(\frac{\sigma}{2})$ as scalar multiplication by 
\begin{gather}\label{delta_V/V(a+b+c-d/2)}
\frac{d^2-13}{16}+\frac{a(a+1)}{4}+\frac{b(b+1)}{4}+\frac{c(c+1)}{4}.
\end{gather}
\end{lem}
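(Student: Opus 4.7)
The proof will mirror the template used in Lemma \ref{lem:AB_V/V(-d-1/2)} (the quotient in the $E_d(a,b,c)$ setting) and in the companion Lemma \ref{lem:AB_V(a+b+c-d/2)} for the submodule $O_d(a,b,c)(\frac{\sigma}{2})$, so the broad strategy is to check the basis, then compute the matrices of $A,B$ by brute application of Lemma \ref{lem:AB_Od}, and finally identify the scalar action of $\delta$ via Theorem \ref{thm:hom}.

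First I would verify that the cosets (\ref{e:basis_V/V(a+b+c-d/2)}) form an $\F$-basis of the quotient. By Lemma \ref{lem3:iota_Od}, the submodule $O_d(a,b,c)(\frac{\sigma}{2})$ has basis $v_0$ together with $v_i-i v_{i-1}$ for even $i$, so the odd-indexed $v_i$ descend to a basis of the quotient of the correct dimension $\frac{d}{2}$, and the nonzero rescalings $\frac{1}{2^{i-1}}v_i$ are also a basis. For the subsequent calculation the key bookkeeping observation is that modulo $O_d(a,b,c)(\frac{\sigma}{2})$ one has $v_0\equiv 0$ and $v_j\equiv j\,v_{j-1}$ for every even $j\geq 2$.

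Second I would apply Lemma \ref{lem:AB_Od} to the odd-indexed $v_i$ and project. For $A v_i$ with odd $i$, the formula only involves $v_i$ and $v_{i+2}$, both odd-indexed, so there is nothing to reduce and after the rescaling $\frac{1}{2^{i-1}}$ the off-diagonal coefficient becomes $1$, while the diagonal entry $\theta_i$ matches the stated expression by expanding the floor/ceiling. For $B v_i$ with odd $i\geq 3$, the formula couples $v_i$ with $v_{i-1}$ and $v_{i-2}$; these have even indices and are replaced by $(i-1)v_{i-2}$ and then $(i-2)v_{i-3}$ respectively, and after rescaling the superdiagonal entry consolidates to the stated $\varphi_i$. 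This step is essentially a matching calculation, and is where I expect the main obstacle to lie: the indexing shift from even to odd relative to Lemma \ref{lem:AB_V(a+b+c-d/2)} alters every shifted parameter inside $\theta_i,\theta_i^*,\varphi_i$ by $\pm 2$, and the arithmetic must be tracked carefully. Nevertheless it is elementary.

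Third I would pin down the $\delta$-action. From Proposition \ref{prop:Od}, for odd $i$ one has $t_0 v_i = -\frac{\sigma+2i+2}{2}v_i + v_{i+1}$, and since $v_{i+1}\equiv (i+1)v_i$ in the quotient, this reduces to $t_0 v_i\equiv -\frac{\sigma}{2}v_i$; equivalently, the bottom block of the matrix in Lemma \ref{lem2:iota_Od} acts as $-\frac{\sigma}{2}I_{d/2}$ after projection. Combining this with the expression for $\zeta(\delta)$ in Theorem \ref{thm:hom} and the four scalar actions of $t_0^2,t_1^2,t_0^{\vee 2},t_1^{\vee 2}$ given by Lemma \ref{lem:t^2_Od}, a direct simplification collapses
\[
\frac{t_0^2+t_1^2+t_0^{\vee 2}+t_1^{\vee 2}}{4}-\frac{t_0}{2}-\frac{3}{4}
\]
into the claimed scalar (\ref{delta_V/V(a+b+c-d/2)}), completing the lemma.
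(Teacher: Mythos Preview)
Your approach is correct and essentially identical to the paper's: verify the basis via Lemma \ref{lem3:iota_Od}, read off the $A,B$ matrices from Lemma \ref{lem:AB_Od}, and obtain the $\delta$-scalar from Theorem \ref{thm:hom} together with Lemma \ref{lem:t^2_Od}; the paper's proof is terser and does not spell out the intermediate $t_0\equiv -\frac{\sigma}{2}$ on the quotient, but that is exactly what is implicit there. One small slip in your description of the $B$-reduction: for odd $i$ the vector $v_{i-2}$ is itself odd-indexed and survives unchanged in the quotient, so it is \emph{not} replaced by $(i-2)v_{i-3}$; only $v_{i-1}$ (even-indexed) gets reduced to $(i-1)v_{i-2}$, after which the two contributions to $v_{i-2}$ combine to give $\varphi$.
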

\begin{proof}
By Lemma \ref{lem3:iota_Od} the cosets (\ref{e:basis_V/V(a+b+c-d/2)}) are an $\F$-basis for $O_d(a,b,c)/O_d(a,b,c)(\frac{\sigma}{2})$. 
Applying Lemma \ref{lem:AB_Od} a direct calculation yields the matrices representing $A$ and $B$ with respect to (\ref{e:basis_V/V(a+b+c-d/2)}). By Theorem \ref{thm:hom} and Lemma \ref{lem:t^2_Od} the element $\delta$ acts on $O_d(a,b,c)/O_d(a,b,c)(\frac{\sigma}{2})$ as scalar multiplication by (\ref{delta_V/V(a+b+c-d/2)}). The lemma follows.
\end{proof}

\begin{prop}\label{prop:Rmodule_V/V(a+b+c-d/2)}
Assume that $d\geq 2$. Then the $\Re$-module $O_d(a,b,c)/O_d(a,b,c)(\frac{\sigma}{2})$ is isomorphic to 
$$
R_{\frac{d}{2}-1}\left(
-\frac{a}{2}-\frac{3}{4},
-\frac{b}{2}-\frac{3}{4},
-\frac{c}{2}-\frac{3}{4}
\right).
$$
Moreover the $\Re$-module 
$
O_d(a,b,c)/O_d(a,b,c)(\frac{\sigma}{2})
$ 
is irreducible provided that the $\H$-module $O_d(a,b,c)$ is irreducible.
\end{prop}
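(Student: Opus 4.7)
The plan is to follow the template established by Propositions \ref{prop:Rmodule_V/V(-d-1/2)}, \ref{prop:Rmodule_V/V(-a)}, \ref{prop:Rmodule_V/V(b)}, and \ref{prop:Rmodule_V/V(c)}. First I would set
$$(a',b',c',d') = \left(-\tfrac{a}{2}-\tfrac{3}{4},\,-\tfrac{b}{2}-\tfrac{3}{4},\,-\tfrac{c}{2}-\tfrac{3}{4},\,\tfrac{d}{2}-1\right),$$
then compare the tridiagonal matrices for $A$ and $B$ in Proposition \ref{prop:Rd} with those in Lemma \ref{lem:AB_V/V(a+b+c-d/2)}. A direct substitution should match the three sequences $\theta_i$, $\theta_i^*$, $\varphi_i$. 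In addition one must check that the scalar $(d'/2)(d'/2+1)+a'(a'+1)+b'(b'+1)+c'(c'+1)$ from Proposition \ref{prop:Rd}(ii) equals the scalar (\ref{delta_V/V(a+b+c-d/2)}) from Lemma \ref{lem:AB_V/V(a+b+c-d/2)}; this is a routine algebraic identity. These two checks yield the required $\Re$-module isomorphism.

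For the irreducibility assertion, I would assume the $\H$-module $O_d(a,b,c)$ is irreducible and invoke Proposition \ref{prop:irr_O} to get the four non-membership conditions
$$a+b+c,\;a-b-c,\;-a+b-c,\;-a-b+c\;\notin\;\left\{\tfrac{d+1}{2}-i\;\bigg|\;i=2,4,\ldots,d\right\}.$$
Then I would translate these, via the affine substitution defining $(a',b',c',d')$, into statements about $a'+b'+c'+1$, $-a'+b'+c'$, $a'-b'+c'$, $a'+b'-c'$. Concretely, since $-a+b-c = 2(a'-b'+c') + \tfrac{3}{2}$, the equation $a'-b'+c' = d'/2 - i$ for $i \in \{1,\ldots,d'\}$ rearranges to $-a+b-c = \tfrac{d+1}{2} - 2i$, which runs through precisely $\{\tfrac{d+1}{2}-2,\tfrac{d+1}{2}-4,\ldots,\tfrac{d+1}{2}-(d-2)\}$, a subset of the forbidden set above. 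Analogous bookkeeping handles the other three conditions, and then Proposition \ref{prop:irr_R} supplies irreducibility of $R_{d'}(a',b',c')$.

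The main obstacle, though not conceptually deep, is the indexing arithmetic: verifying that each of the four transformed forbidden sets for $R_{d'}$ sits inside the corresponding forbidden set for $O_d$, while accounting for the shift by $3/4$ in each of $a',b',c'$ (in contrast to Proposition \ref{prop:Rmodule_V(a+b+c-d/2)} where the shift was $1/4$). The $3/4$ shift here is what makes the forbidden indices for $R_{d'}$ run over the even displacements $\{2,4,\ldots,d-2\}$ in the $O_d$ side rather than odd ones, which is exactly what is needed. Apart from that, no new ingredient is required: Lemma \ref{lem:AB_V/V(a+b+c-d/2)} already establishes the relevant matrix entries, and Propositions \ref{prop:irr_O} and \ref{prop:irr_R} do the rest.
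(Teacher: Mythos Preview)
Your proposal is correct and follows essentially the same approach as the paper: set the same $(a',b',c',d')$, compare Lemma \ref{lem:AB_V/V(a+b+c-d/2)} with Proposition \ref{prop:Rd} to obtain the isomorphism, then translate the non-membership conditions of Proposition \ref{prop:irr_O} into those required by Proposition \ref{prop:irr_R}. The paper in fact records the slightly larger range $i=1,\ldots,d'+1$ for the primed non-membership conditions (which the $O_d$ irreducibility also yields), but only $i=1,\ldots,d'$ is needed for Proposition \ref{prop:irr_R}, so your more economical range is perfectly adequate.
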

\begin{proof}
Set $(a',b',c',d')=(
-\frac{a}{2}-\frac{3}{4},
-\frac{b}{2}-\frac{3}{4},
-\frac{c}{2}-\frac{3}{4},
\frac{d}{2}-1)$.
Comparing Proposition \ref{prop:Rd} with Lemma \ref{lem:AB_V/V(a+b+c-d/2)} it follows that the $\Re$-module $O_d(a,b,c)/O_d(a,b,c)(\frac{\sigma}{2})$ is isomorphic to $R_{d'}(a',b',c')$. Suppose that the $\H$-module $O_d(a,b,c)$ is irreducible. Using Proposition \ref{prop:irr_O} yields that 
\begin{gather*}
a'+b'+c'+1,
-a'+b'+c',
a'-b'+c',
a'+b'-c'
\not\in
\left\{
\frac{d'}{2}-i
\,\bigg|
\,
i=1,2,\ldots,d'+1
\right\}.
\end{gather*}
By Proposition \ref{prop:irr_R} the $\Re$-module $R_{d'}(a',b',c')$ is irreducible. The proposition follows.
\end{proof}

For the rest of this subsection we let $O_d(a,b,c)(0)'$ denote the $\F$-subspace of 
$O_d(a,b,c)(0)$ spanned by
$$
v_i-i v_{i-1}
\qquad 
\hbox{for all $i=2,4,\ldots,d$}.
$$

\begin{lem}\label{lem:Rmodule_V(0)'}
Assume that $d\geq 2$ and $a+b+c=\frac{d+1}{2}$. Then  $O_d(a,b,c)(0)'$ 
is an $\Re$-module and the actions of $A,B,\delta$ on $O_d(a,b,c)(0)'$ are as follows: The matrices representing $A$ and $B$ with respect to the $\F$-basis 
\begin{gather}\label{e:basis_V(0)'}
\frac{1}{2^{i-2}}(v_i-i v_{i-1})
\qquad 
\hbox{for $i=2,4,\ldots,d$}
\end{gather}  
for the $\Re$-module $O_d(a,b,c)(0)'$ are 
$$
\begin{pmatrix}
\theta_0 & & &  &{\bf 0}
\\ 
1 &\theta_1 
\\
&1 &\theta_2
 \\
& &\ddots &\ddots
 \\
{\bf 0} & & &1 &\theta_{\frac{d}{2}-1}
\end{pmatrix},
\qquad 
\begin{pmatrix}
\theta_0^* &\varphi_1 &  & &{\bf 0}
\\ 
 &\theta_1^* &\varphi_2
\\
 &  &\theta_2^* &\ddots
 \\
 & & &\ddots &\varphi_{\frac{d}{2}-1}
 \\
{\bf 0}  & & & &\theta^*_{\frac{d}{2}-1}
\end{pmatrix}
$$
respectively, where 
\begin{align*}
\theta_i
&=\frac{(2a-d+4i+1)(2a-d+4i+5)}{16} 
\qquad (0\leq i\leq \textstyle\frac{d}{2}-1),
\\
\theta_i^*
&=\frac{(2b-d+4i+1)(2b-d+4i+5)}{16}
\qquad (0\leq i\leq \textstyle\frac{d}{2}-1),
\\
\varphi_i &=
\frac{i(2i-d)(2a+2b+2c-d+4i+3)(2a+2b-2c-d+4i+1)}{32}
\qquad 
(1\leq i\leq \textstyle\frac{d}{2}-1).
\end{align*}
The element $\delta$ acts on $O_d(a,b,c)(0)'$ as scalar multiplication by 
\begin{gather}\label{delta_V(0)'}
\frac{d^2-13}{16}+\frac{a(a+1)}{4}+\frac{b(b+1)}{4}+\frac{c(c+1)}{4}.
\end{gather}
\end{lem}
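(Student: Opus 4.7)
The plan is to verify directly that $O_d(a,b,c)(0)'$ is stable under the action of $A$, $B$, and $\delta$; by Lemma~\ref{lem:delta}(ii) this suffices to conclude it is an $\Re$-submodule of $O_d(a,b,c)$, and the explicit matrices for $A$ and $B$ on the basis (\ref{e:basis_V(0)'}) will drop out of the same computations after dividing the vectors $v_i - iv_{i-1}$ by $2^{i-2}$.

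For the $A$-invariance, the key observation is the identity $\theta_i = \theta_{i-1}$ for every even $i$ with $2 \leq i \leq d$, which holds because $\lceil i/2\rceil = \lceil (i-1)/2\rceil = i/2$ in the formulas of Lemma~\ref{lem:AB_Od}. Using that lemma, a short direct computation gives
\[
A(v_i - iv_{i-1}) = \theta_i\,(v_i - iv_{i-1}) + \tfrac{1}{4}\bigl(v_{i+2} - (i+2)v_{i+1}\bigr)
\]
for even $2 \leq i \leq d-2$, together with $A(v_d - dv_{d-1}) = \theta_d(v_d - dv_{d-1})$ at the top; both outputs lie in $O_d(a,b,c)(0)'$. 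An analogous calculation for $B$, exploiting the parallel identity $\theta_i^* = \theta_{i-1}^*$ and the hypothesis $\sigma = 0$ (which collapses the factors $\sigma+i$ and $\sigma+i-2$ in the $B$-action of Lemma~\ref{lem:AB_Od}), yields
\[
B(v_i - iv_{i-1}) = \theta_i^*\,(v_i - iv_{i-1}) + \tfrac{i(i-2)(i-d-2)(\tau+i-1)}{4}\bigl(v_{i-2} - (i-2)v_{i-3}\bigr)
\]
for even $4 \leq i \leq d$, together with $B(v_2 - 2v_1) = \theta_2^*(v_2 - 2v_1)$, where the stray $v_0$ contributions cancel because $\theta_1^* = \theta_2^*$ and $\sigma = 0$ forces the remaining coefficients of $v_0$ to match.

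For $\delta$, note that $O_d(a,b,c)(0)' \subseteq O_d(a,b,c)(0)$, so $t_0$ acts as $0$ on it, while each of $t_0^2, t_1^2, t_0^{\vee 2}, t_1^{\vee 2}$ is central in $\H$ and hence acts as a scalar on all of $O_d(a,b,c)$ with the values given by Lemma~\ref{lem:t^2_Od}. Since $\zeta(\delta) = \tfrac{1}{4}(t_0^2+t_1^2+t_0^{\vee 2}+t_1^{\vee 2}) - \tfrac{1}{2}t_0 - \tfrac{3}{4}$ by Theorem~\ref{thm:hom}, it follows that $\delta$ acts on $O_d(a,b,c)(0)'$ as a scalar; substituting $a+b+c=(d+1)/2$ and simplifying recovers (\ref{delta_V(0)'}).

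Finally, rescaling each $v_i - iv_{i-1}$ by $1/2^{i-2}$ converts the two recurrences above into the displayed tridiagonal matrices; under the reindexing $i = 2(k+1)$ the $B$-coefficient $i(i-2)(i-d-2)(\tau+i-1)/4$ becomes the stated $\varphi_k$ once one uses $2(a+b+c) = d+1$ to rewrite $\tau + 2k+1$ in the form appearing in the lemma. The main obstacle is the bookkeeping required by the parity-split formulas of Lemma~\ref{lem:AB_Od} and the ceiling conventions in $\theta_i,\theta_i^*$; no conceptual difficulty arises beyond careful case-tracking.
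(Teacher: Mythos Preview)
Your proof is correct and follows essentially the same strategy as the paper: verify invariance under $A$, $B$, $\delta$ and invoke Lemma~\ref{lem:delta}(ii). The only difference is one of efficiency: the paper reads the matrices directly off Lemma~\ref{lem:AB_V(a+b+c-d/2)} (where they are already computed on the larger space $O_d(a,b,c)(\sigma/2)$), observing that under $a+b+c=\tfrac{d+1}{2}$ the entry $\varphi_1$ in that lemma vanishes so the span of the last $\tfrac{d}{2}$ basis vectors is $B$-invariant, and that the scalar (\ref{delta_V(a+b+c-d/2)}) equals (\ref{delta_V(0)'}); you instead go back to Lemma~\ref{lem:AB_Od} and recompute these entries from scratch. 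Your recurrences for $A(v_i-iv_{i-1})$ and $B(v_i-iv_{i-1})$ check out, and your rescaling argument correctly produces the stated $\varphi_k$ after using $2(a+b+c)=d+1$.
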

\begin{proof}
It follows from Lemma \ref{lem:AB_V(a+b+c-d/2)} that $O_d(a,b,c)(0)'$ is invariant under $A$ and $\delta$; under the assumption $a+b+c=\frac{d+1}{2}$ it is also invariant under $B$.
Hence $O_d(a,b,c)(0)'$ is an $\Re$-module by Lemma \ref{lem:delta}(ii). 

By Lemma \ref{lem:AB_V(a+b+c-d/2)} the matrix representing $A$ with respect to the $\F$-basis (\ref{e:basis_V(0)'}) for $O_d(a,b,c)(0)'$ is as stated. Under the assumption $a+b+c=\frac{d+1}{2}$ the matrix representing $B$ with respect to (\ref{e:basis_V(0)'}) is as stated and the scalars (\ref{delta_V(a+b+c-d/2)}) and (\ref{delta_V(0)'}) are identical. The lemma follows.
\end{proof}

\begin{prop}\label{prop:Rmodule_V(0)'}
Assume that $d\geq 2$ and $a+b+c=\frac{d+1}{2}$. Then the following hold:
\begin{enumerate}
\item The $\Re$-module $O_d(a,b,c)(0)'$ is isomorphic to 
$$
R_{\frac{d}{2}-1}\left(
-\frac{a}{2}-\frac{3}{4},
-\frac{b}{2}-\frac{3}{4},
-\frac{c}{2}-\frac{3}{4}
\right).
$$

\item If the $\H$-module $O_d(a,b,c)$ is irreducible then the $\Re$-module $O_d(a,b,c)(0)'$ is irreducible.

\item The $\Re$-module $O_d(a,b,c)(0)/O_d(a,b,c)(0)'$ is isomorphic to $R_0(-\frac{b+c+1}{2},-\frac{c+a+1}{2},-\frac{a+b+1}{2})$.
\end{enumerate}
\end{prop}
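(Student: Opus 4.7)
The plan is to handle the three parts in order, leveraging the computations already packaged in Lemma \ref{lem:Rmodule_V(0)'} and the irreducibility criteria of Propositions \ref{prop:irr_R} and \ref{prop:irr_O}.

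For (i), I would set $(a',b',c',d')=\bigl(-\tfrac{a}{2}-\tfrac{3}{4},\,-\tfrac{b}{2}-\tfrac{3}{4},\,-\tfrac{c}{2}-\tfrac{3}{4},\,\tfrac{d}{2}-1\bigr)$ and compare the matrices displayed in Lemma \ref{lem:Rmodule_V(0)'} with the ones in Proposition \ref{prop:Rd} evaluated at $(a',b',c',d')$. The verification of the entries $\theta_i,\theta_i^*,\varphi_i$ is an elementary algebraic substitution; the central action of $\delta$ also matches by direct comparison of (\ref{delta_V(0)'}) with the scalar $\tfrac{d'}{2}(\tfrac{d'}{2}+1)+a'(a'+1)+b'(b'+1)+c'(c'+1)$. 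This identifies $O_d(a,b,c)(0)'$ with $R_{d'}(a',b',c')$ as $\Re$-modules.

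For (ii), combining (i) with Proposition \ref{prop:irr_R} reduces the problem to showing that
$$
a'+b'+c'+1,\ -a'+b'+c',\ a'-b'+c',\ a'+b'-c'\ \notin\ \bigl\{\tfrac{d'}{2}-i\mid i=1,2,\ldots,d'\bigr\}.
$$
Assuming the $\H$-module $O_d(a,b,c)$ irreducible, I would translate the four forbidden conditions of Proposition \ref{prop:irr_O} through the substitution $(a',b',c',d')=(-\tfrac{a}{2}-\tfrac{3}{4},-\tfrac{b}{2}-\tfrac{3}{4},-\tfrac{c}{2}-\tfrac{3}{4},\tfrac{d}{2}-1)$; this gives a slightly larger forbidden set $\{\tfrac{d'}{2}-i\mid i=1,\ldots,d'+1\}$ for the same four quantities (as in the companion proposition for $E_d(a,b,c)$), which is more than enough.

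For (iii), the essential observation is that under the hypothesis $a+b+c=\tfrac{d+1}{2}$ we have $\sigma=0$, so $O_d(a,b,c)(0)=O_d(a,b,c)(\tfrac{\sigma}{2})$ has dimension $\tfrac{d}{2}+1$ by Lemma \ref{lem3:iota_Od}, whereas $O_d(a,b,c)(0)'$ has dimension $\tfrac{d}{2}$; hence the quotient is one-dimensional with basis the coset of $v_0$. Using Lemma \ref{lem:AB_Od}, the equation $Av_0=\theta_0 v_0-\tfrac12 v_1+\tfrac14 v_2$ can be rewritten as $Av_0=\theta_0 v_0+\tfrac14(v_2-2v_1)$, showing that $A$ acts on the quotient as $\theta_0$; likewise $B$ acts as $\theta_0^*$. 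The main bookkeeping, and the only potential obstacle, is to verify that the scalars predicted by $R_0\bigl(-\tfrac{b+c+1}{2},-\tfrac{c+a+1}{2},-\tfrac{a+b+1}{2}\bigr)$ coincide with $\theta_0,\theta_0^*$ and with the expected value of $\delta$. Using $a+b+c=\tfrac{d+1}{2}$ one rewrites $\tfrac{(b+c)^2-1}{4}=\tfrac{(\tfrac{d+1}{2}-a-1)(\tfrac{d+1}{2}-a+1)}{4}=\tfrac{(2a-d-3)(2a-d+1)}{16}=\theta_0$, and similarly for $B$. Finally, since $t_0$ acts as $0$ on the quotient, Theorem \ref{thm:hom} and Lemma \ref{lem:t^2_Od} (with $\sigma=0$) give $\delta=\tfrac{(b+c)^2+(a+b)^2+(a+c)^2-3}{4}$, which equals $a'(a'+1)+b'(b'+1)+c'(c'+1)$ for the stated $(a',b',c')$. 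This completes the identification with $R_0\bigl(-\tfrac{b+c+1}{2},-\tfrac{c+a+1}{2},-\tfrac{a+b+1}{2}\bigr)$.
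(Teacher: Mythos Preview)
Your argument is correct and follows essentially the same path as the paper. The only difference is cosmetic: for (i) and (ii) the paper notes that the matrices in Lemma \ref{lem:Rmodule_V(0)'} coincide with those in Lemma \ref{lem:AB_V/V(a+b+c-d/2)} and then invokes Proposition \ref{prop:Rmodule_V/V(a+b+c-d/2)} directly, whereas you redo the comparison with Proposition \ref{prop:Rd} and the irreducibility check by hand; for (iii) the paper reads off the scalars from Lemma \ref{lem:AB_V(a+b+c-d/2)} rather than Lemma \ref{lem:AB_Od}, but the computation is the same.
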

\begin{proof}
(i), (ii): Comparing Lemmas \ref{lem:AB_V/V(a+b+c-d/2)} and \ref{lem:Rmodule_V(0)'} the statements (i), (ii) are immediate from Proposition \ref{prop:Rmodule_V/V(a+b+c-d/2)}.

(iii): Set $(a',b',c')=(
-\frac{b+c+1}{2},-\frac{c+a+1}{2},-\frac{a+b+1}{2})$. Under the assumption $a+b+c=\frac{d+1}{2}$, it follows from Lemma \ref{lem:AB_V(a+b+c-d/2)} that $A,B,\delta$ act on the $\Re$-module $O_d(a,b,c)(0)/O_d(a,b,c)(0)'$ as scalar multiplication by 
$$
a'(a'+1),
\qquad 
b'(b'+1),
\qquad
a'(a'+1)+b'(b'+1)+c'(c'+1),
$$
respectively. Hence the $\Re$-module $O_d(a,b,c)(0)/O_d(a,b,c)(0)'$ is isomorphic to the $\Re$-module $R_0(a',b',c')$ by Proposition \ref{prop:Rd}.
\end{proof}

\begin{thm}\label{thm:Od}
Assume that the $\H$-module $O_d(a,b,c)$ is irreducible. Then the following hold:
\begin{enumerate}
\item If $d=0$ then the $\Re$-module $O_d(a,b,c)$ is irreducible.
\item If $d\geq 2$ and $a+b+c=\frac{d+1}{2}$ then 
\begin{table}[H]
\begin{tikzpicture}[node distance=1.2cm]
 \node (0)                  {$\{0\}$};
 \node (1)  [above of=0]   {$O_d(a,b,c)(0)'$};
 \node (E)  [above of=1]   {$O_d(a,b,c)(0)$};
 \node (V)  [above of=E]   {$O_d(a,b,c)$};
 \draw (0)   -- (1);
 \draw (1)   -- (E);
 \draw (E)  -- (V);
\end{tikzpicture}
\end{table}
\noindent is the lattice of $\Re$-submodules of $O_d(a,b,c)$.

\item  If $d\geq 2$ and $a+b+c\not=\frac{d+1}{2}$ then 
 \begin{table}[H]
\begin{tikzpicture}[node distance=1.2cm]
 \node (0)                  {$\{0\}$};
 \node (1)  [above of=0]   {};
 \node (2)  [right of=1]   {};
 \node (3)  [left of=1]   {};
 \node (E1)  [right of=2]  {$O_d(a,b,c)(\frac{\sigma}{2})$};
 \node (E2)  [left of=3]   {$O_d(a,b,c)(-\frac{\sigma}{2})$};
 \node (V) [above of=1]  {$O_d(a,b,c)$};
 \draw (0)   -- (E1);
 \draw (0)   -- (E2);
 \draw (E1)   -- (V);
 \draw (E2)  -- (V);
\end{tikzpicture}
\end{table}
\noindent is the lattice of $\Re$-submodules of $O_d(a,b,c)$.
\end{enumerate}
\end{thm}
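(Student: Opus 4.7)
The plan is to handle the three cases separately; (i) is immediate because $O_0(a,b,c)$ is one-dimensional, and (iii) is a transcription of Theorem \ref{thm:Ed}(ii), so the substance is case (ii). For (iii), Lemma \ref{lem:iota_Od}(iii) gives that $t_0$ is diagonalizable on $O_d(a,b,c)$ with exactly the two eigenvalues $\pm\sigma/2$, both eigenspaces are nonzero $\Re$-submodules by Proposition \ref{prop:t0eigenspace=Rmodule}, and Propositions \ref{prop:Rmodule_V(a+b+c-d/2)} and \ref{prop:Rmodule_V/V(a+b+c-d/2)} upgrade
$$
\{0\}\subset O_d(a,b,c)(\textstyle\frac{\sigma}{2})\subset O_d(a,b,c)
$$
(and, symmetrically, the chain through $O_d(a,b,c)(-\frac{\sigma}{2})$) to a composition series. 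Proposition \ref{prop:irr_Rmodule_in_t0eigenspace} confines every simple $\Re$-submodule to one of the two eigenspaces, and Jordan--H\"older produces the diamond lattice.

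For (ii) I first assemble Lemma \ref{lem:Rmodule_V(0)'} with Propositions \ref{prop:Rmodule_V(0)'}(ii)--(iii) and \ref{prop:Rmodule_V/V(a+b+c-d/2)} to see that
$$
\{0\}\subset O_d(a,b,c)(0)'\subset O_d(a,b,c)(0)\subset O_d(a,b,c)
$$
is a composition series whose factors, from bottom to top, are isomorphic to $R_{\frac{d}{2}-1}(-\frac{a}{2}-\frac{3}{4},-\frac{b}{2}-\frac{3}{4},-\frac{c}{2}-\frac{3}{4})$, $R_0(-\frac{b+c+1}{2},-\frac{c+a+1}{2},-\frac{a+b+1}{2})$, and $R_{\frac{d}{2}-1}(-\frac{a}{2}-\frac{3}{4},-\frac{b}{2}-\frac{3}{4},-\frac{c}{2}-\frac{3}{4})$. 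The lattice claim then amounts to the uniseriality of $O_d(a,b,c)$, which I plan to prove by identifying both its socle and its head. Because $t_0$ is nilpotent of index two (Lemma \ref{lem:iota_Od}(ii)) and centralizes $\Re$ (Lemma \ref{lem:t0_centralizer}), Schur's lemma forces $t_0$ to act as zero on every simple sub- or quotient-module, so every simple submodule lies in $\ker t_0=O_d(a,b,c)(0)$ and every maximal submodule contains $\mathrm{im}(t_0)=O_d(a,b,c)(0)'$. Uniqueness of the socle therefore reduces to the non-splitness of $0\to O_d(a,b,c)(0)'\to O_d(a,b,c)(0)\to R_0\to 0$, and uniqueness of the head to the non-splitness of $0\to R_0\to O_d(a,b,c)/O_d(a,b,c)(0)'\to R_{\frac{d}{2}-1}\to 0$.

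The main obstacle is precisely these two non-splitting statements: since $t_0$ vanishes on both of the relevant length-two modules, the $t_0$-eigenvalue argument that settled case (iii) is unavailable, and I would dispatch them by direct matrix computation. A hypothetical one-dimensional complement of $O_d(a,b,c)(0)'$ inside $O_d(a,b,c)(0)$ is spanned by a common eigenvector of $A$ and $B$ whose eigenvalues must coincide with those by which $A$ and $B$ act on the one-dimensional $\Re$-module $R_0(-\frac{b+c+1}{2},-\frac{c+a+1}{2},-\frac{a+b+1}{2})$; the bidiagonal forms of $A$ and $B$ recorded in Lemma \ref{lem:AB_V(a+b+c-d/2)} turn this into a finite system of polynomial identities in $a,b,c,d$ that one checks are incompatible with the irreducibility hypothesis via Proposition \ref{prop:irr_O}. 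An analogous argument, applied to a lift of a generator of the top factor, handles the second extension. Once both extensions are known to be non-split, $O_d(a,b,c)(0)'$ and $O_d(a,b,c)(0)$ are respectively the unique socle and the unique maximal submodule, and the lattice is forced to be the chain displayed in (ii).
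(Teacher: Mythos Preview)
Your approach matches the paper's for all three parts: (i) and (iii) are handled exactly as in the paper, and for (ii) both you and the paper set up the composition series and then rule out a one-dimensional complement inside $O_d(a,b,c)(0)$ by playing the bidiagonal eigenvalue data of Lemma~\ref{lem:AB_V(a+b+c-d/2)} against Proposition~\ref{prop:irr_O}. You are in fact more careful than the paper's written proof on one point: you correctly note that simple socle alone does not force a length-three module to be uniserial, and you therefore also plan to verify the dual non-splitting of $O_d(a,b,c)/O_d(a,b,c)(0)'$, a step the paper does not carry out explicitly. Your identification $\mathrm{im}\,t_0=O_d(a,b,c)(0)'$ is a clean device here---since on the irreducible $\H$-module $V$ the operator $t_0$ differs from $-2\zeta(\delta)$ by a scalar and hence preserves every $\Re$-submodule, nilpotence of $t_0$ forces every maximal $\Re$-submodule to contain $\mathrm{im}\,t_0$, reducing the head question to exactly that second extension.
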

\begin{proof}
(i): 
If $d=0$ then  $O_d(a,b,c)$ is one-dimensional and hence an irreducible $\Re$-module.

(ii): Suppose that $d\geq 2$ and $a+b+c=\frac{d+1}{2}$. Combined with Propositions \ref{prop:Rmodule_V/V(a+b+c-d/2)} and \ref{prop:Rmodule_V(0)'} the sequence 
\begin{gather}\label{csOd'}
\textstyle
\{0\}\subset O_d(a,b,c)(0)'\subset O_d(a,b,c)(0)\subset O_d(a,b,c)
\end{gather}
is a composition series for the $\Re$-module $O_d(a,b,c)$.

By Proposition \ref{prop:irr_Rmodule_in_t0eigenspace} and Lemma \ref{lem:iota_Od}(ii), every irreducible $\Re$-submodule of $O_d(a,b,c)$ is contained in $O_d(a,b,c)(0)$. 
To see (ii), it remains to show that $O_d(a,b,c)(0)'$ is the unique irreducible $\Re$-submodule of $O_d(a,b,c)(0)$. Suppose on the contrary that $W$ is an irreducible $\Re$-submodule $O_d(a,b,c)(0)$ different from $O_d(a,b,c)(0)'$. By irreducibility, we have  $O_d(a,b,c)(0)'\cap W=\{0\}$. Since $O_d(a,b,c)(0)'$ is of codimension $1$ in $O_d(a,b,c)(0)$, it follows that $W$ is of dimension $1$ and 
\begin{gather}\label{direcsum}
O_d(a,b,c)(0)=O_d(a,b,c)(0)'\oplus W.
\end{gather}
Applying Jordan--H\"{o}lder theorem to (\ref{csOd'}) the one-dimensional $\Re$-module $W$ is isomorphic to $O_d(a,b,c)(0)'$ when $d=2$ or $O_d(a,b,c)(0)/O_d(a,b,c)(0)'$.

First we suppose that $d=2$ and the $\Re$-module $W$ is isomorphic to $O_d(a,b,c)(0)'$. By Lemma \ref{lem:AB_V(a+b+c-d/2)} 
the eigenvalues of $A$ in $O_d(a,b,c)(0)$ are 
\begin{align*}
\theta_0 &=\frac{(2a-5)(2a-1)}{16},
\\
\theta_1 &=\frac{(2a-1)(2a+3)}{16}.
\end{align*}
By Lemma \ref{lem:Rmodule_V(0)'} 
the eigenvalue of $A$ in $O_d(a,b,c)(0)'$ is 
$
\theta_1$.
Combined with (\ref{direcsum}) this implies $\theta_0=\theta_1$.
It follows that 
$
a=\frac{1}{2}.
$ 
By considering the eigenvalues of $B$ in $O_d(a,b,c)(0)$ and $O_d(a,b,c)(0)'$, a similar argument implies $b=\frac{1}{2}$. Moreover $c=\frac{1}{2}$ by the assumption $a+b+c=\frac{d+1}{2}$. Then 
$$
a-b-c=-a+b-c=-a-b+c=-\frac{1}{2}.
$$  
This leads to 
a contradiction to the irreducibility of the $\H$-module $O_d(a,b,c)$ by Proposition \ref{prop:irr_O}.

Next we suppose that $W$ is isomorphic to $O_d(a,b,c)(0)/O_d(a,b,c)(0)'$. By Proposition \ref{prop:Rmodule_V(0)'}(iii) the elements $A$ and $B$ act on $W$ as the scalars 
\begin{align*}
\theta_0  &=\frac{(b+c-1)(b+c+1)}{4}=\frac{(2a-d-3)(2a-d+1)}{16},
\\
\theta_0^*&=\frac{(c+a-1)(c+a+1)}{4}=\frac{(2b-d-3)(2b-d+1)}{16},
\end{align*} 
respectively. By Lemma \ref{lem:AB_V(a+b+c-d/2)} the $\theta_0$-eigenspace of $A$ in $O_d(a,b,c)(0)$ is one-dimensional and hence is equal to $W$. Consequently $W$ contains a vector $w$ in which the coefficient of $\frac{1}{2^d}
(v_d-d v_{d-1})$ with respect to the $\F$-basis (\ref{e:basis_V(a+b+c-d/2)}) for $O_d(a,b,c)(0)$ is $1$. By Lemma \ref{lem:AB_V(a+b+c-d/2)} the coefficient of $\frac{1}{2^d}(v_d-d v_{d-1})$ in $Bw$ with respect to (\ref{e:basis_V(a+b+c-d/2)}) is 
$$
\theta_\frac{d}{2}^*=\frac{(2b+d-3)(2b+d+1)}{16}.
$$ 
Since $w$ is a $\theta_0^*$-eigenvector of $B$ it follows that 
$\theta_0^*=\theta_\frac{d}{2}^*$.
Hence $b=\frac{1}{2}$. Combined with the assumption $a+b+c=\frac{d+1}{2}$ we have 
$$
-a+b-c=\frac{1-d}{2}.
$$ 
This leads to a contradiction to the irreducibility of the $\H$-module $O_d(a,b,c)$ by Proposition \ref{prop:irr_O}. We have shown that $O_d(a,b,c)(0)'$ is the unique irreducible $\Re$-submodule of $O_d(a,b,c)(0)$. Therefore (ii) follows.

(iii): Using the above lemmas and propositions, the statement (iii) follows by an argument similar to the proof of Theorem \ref{thm:Ed}(ii).
\end{proof}

\section{The summary}\label{s:proof}

We summarize the results of \S\ref{s:lattice_Ed}--\S\ref{s:lattice_Od} as follows:

\begin{thm}\label{thm:BIasRmodule}
Let $V$ denote a finite-dimensional irreducible $\H$-module. Given any $\theta\in \F$ let $V(\theta)$ denote the null space of $t_0-\theta$ in $V$. Then the following hold:
\begin{enumerate}
\item Suppose that $t_0$ is not diagonalizable on $V$. Then $0$ is the unique eigenvalue of $t_0$ in $V$. Moreover the following hold:
\begin{enumerate}
\item If the dimension of $V$ is even then the lattice of $\Re$-submodules of $V$ is as follows:
\begin{table}[H]
\begin{tikzpicture}[node distance=1.2cm]
 \node (0)                  {$\{0\}$};
 \node (E)  [above of=0]   {$V(0)$};
 \node (V) [above of=E]  {$V$};
 \draw (0)   -- (E);
 \draw (E)  -- (V);
\end{tikzpicture}
\end{table}

\item If the dimension of $V$ is odd then the lattice of $\Re$-submodules of $V$ is as follows:
\begin{table}[H]
\begin{tikzpicture}[node distance=1.2cm]
 \node (0)                  {$\{0\}$};
 \node (W)  [above of=0]   {$V(0)'$};
 \node (E)  [above of=W]   {$V(0)$};
 \node (V) [above of=E]  {$V$};
 \draw (0)   -- (W);
 \draw (W)   -- (E);
 \draw (E)  -- (V);
\end{tikzpicture}
\end{table}
\noindent Here $V(0)'$ is the irreducible $\Re$-submodule of $V(0)$ that has codimension $1$.
\end{enumerate}

\item Suppose that $t_0$ is diagonalizable on $V$. Then there are at most two eigenvalues of $t_0$ in $V$. Moreover the following hold:
\begin{enumerate}
\item If $t_0$ has exactly one eigenvalue in $V$ then the $\Re$-module $V$ is irreducible of dimension less than or equal to $2$. 

\item If $t_0$ has exactly two eigenvalues in $V$ then there exists a nonzero scalar $\theta\in \F$ such that $\pm \theta$ are the eigenvalues of $t_0$ and the lattice of $\Re$-submodules of $V$ is as follows:
\begin{table}[H]
\begin{tikzpicture}[node distance=1.2cm]
 \node (0)                  {$\{0\}$};
 \node (W)  [above of=0]   {};
 \node (E1)  [left of=W]   {$V(-\theta)$};
 \node (E2) [right of=W]  {$V(\theta)$};
 \node (V)  [above of=W]   {$V$};
 \draw (0)   -- (E1);
 \draw (0)   -- (E2);
 \draw (E1)  -- (V);
 \draw (E2)  -- (V);
\end{tikzpicture}
\end{table}
\end{enumerate}
\end{enumerate}
\end{thm}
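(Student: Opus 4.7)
The plan is to assemble the summary statement directly from the case-by-case analysis already carried out in \S\ref{s:lattice}. By Theorem \ref{thm:BImodule_even dim} and Theorem \ref{thm:BImodule_odd dim}, every finite-dimensional irreducible $\H$-module $V$ falls into one of five families, namely $E_d(a,b,c)$ with $d$ odd together with its three twists $E_d(a,b,c)^{(1,-1)}$, $E_d(a,b,c)^{(-1,1)}$, $E_d(a,b,c)^{(-1,-1)}$, or $O_d(a,b,c)$ with $d$ even. So the proof is really a regrouping of Theorems \ref{thm:Ed}, \ref{thm:Ed(1,-1)}, \ref{thm:Ed(-1,1)}, \ref{thm:Ed(-1,-1)}, and \ref{thm:Od} according to the behaviour of $t_0$ rather than according to which family $V$ belongs to.

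First I would use the block descriptions of $t_0$ in Lemmas \ref{lem2:iota_Ed}, \ref{lem2:iota_Ed(1,-1)}, \ref{lem2:iota_Ed(-1,1)}, \ref{lem2:iota_Ed(-1,-1)}, \ref{lem2:iota_Od} to show that on each such $V$ the operator $t_0$ has at most two eigenvalues, and that whenever there are two they form a pair $\pm\theta$ with $\theta\neq 0$ (the blocks are scalar multiples of $\pm\theta$ times identities, hence traceless). Combining the diagonalizability lemmas \ref{lem:iota_Ed}, \ref{lem:iota_Ed(1,-1)}, \ref{lem:iota_Ed(-1,1)}, \ref{lem:iota_Ed(-1,-1)}, \ref{lem:iota_Od}, non-diagonalizability occurs precisely in the four cases $E_d(a,b,c)^{(1,-1)}$ with $a=0$, $E_d(a,b,c)^{(-1,1)}$ with $b=0$, $E_d(a,b,c)^{(-1,-1)}$ with $c=0$, and $O_d(a,b,c)$ with $d\geq 2$ and $a+b+c=\frac{d+1}{2}$; in each the unique eigenvalue is $0$. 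This already proves the opening sentences of parts (i) and (ii).

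Next I would read the lattice off in each regime. For part (i)(a), the three even-dimensional non-diagonalizable situations all supply the chain $\{0\}\subset V(0)\subset V$ via Theorems \ref{thm:Ed(1,-1)}(i), \ref{thm:Ed(-1,1)}(i), \ref{thm:Ed(-1,-1)}(i). For part (i)(b), the odd-dimensional non-diagonalizable situation is Theorem \ref{thm:Od}(ii), where I would take $V(0)'$ to be the image of $O_d(a,b,c)(0)'$ under the chosen isomorphism $V\cong O_d(a,b,c)$; its irreducibility and codimension-one property in $V(0)$ were proved in Proposition \ref{prop:Rmodule_V(0)'}. For part (ii)(a), the diagonalizable single-eigenvalue case forces either $d=1$ in the $E$-family, giving a two-dimensional irreducible $\Re$-module by Theorem \ref{thm:Ed}(i), or $d=0$ in the $O$-family, giving a one-dimensional irreducible $\Re$-module by Theorem \ref{thm:Od}(i). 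For part (ii)(b), the diamond lattice is produced directly by Theorem \ref{thm:Ed}(ii) with $\theta=\frac{d+1}{2}$, Theorem \ref{thm:Ed(1,-1)}(ii) with $\theta=a$, Theorem \ref{thm:Ed(-1,1)}(ii) with $\theta=b$, Theorem \ref{thm:Ed(-1,-1)}(ii) with $\theta=c$, and Theorem \ref{thm:Od}(iii) with $\theta=\frac{\sigma}{2}$.

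The hard part is nothing more than the organizational bookkeeping of matching the five families to the four clauses (i)(a), (i)(b), (ii)(a), (ii)(b), plus one semantic check: that the theorem's parameter $\theta$ in (ii)(b) can always be chosen so that both eigenspaces are genuine $\Re$-submodules (immediate from Proposition \ref{prop:t0eigenspace=Rmodule}) and so that no further $\Re$-submodule is missed (immediate from Proposition \ref{prop:irr_Rmodule_in_t0eigenspace}, which restricts every irreducible $\Re$-submodule to a $t_0$-eigenspace). No new calculation is required beyond quoting the cited results.
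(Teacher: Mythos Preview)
Your proposal is correct and matches the paper's approach: the paper presents Theorem \ref{thm:BIasRmodule} purely as a summary of \S\ref{s:lattice_Ed}--\S\ref{s:lattice_Od} without a separate proof, and you have correctly spelled out the regrouping of Theorems \ref{thm:Ed}, \ref{thm:Ed(1,-1)}, \ref{thm:Ed(-1,1)}, \ref{thm:Ed(-1,-1)}, \ref{thm:Od} according to the behaviour of $t_0$. One cosmetic remark: the parenthetical ``hence traceless'' is not the real reason the eigenvalues pair as $\pm\theta$ (the block sizes are unequal in the $E_d$ and $O_d$ cases); the clean argument is that $t_0^2$ is central and so acts as a scalar $\theta^2$ on $V$, forcing every eigenvalue of $t_0$ to be $\pm\theta$.
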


As byproducts of Theorem \ref{thm:BIasRmodule} we have the following corollaries:

\begin{cor}
Let $V$ denote a finite-dimensional irreducible $\H$-module. If $\theta$ is an eigenvalue of $t_0$ in $V$ then either $V=V(\theta)$ or the $\Re$-module $V/V(\theta)$ is irreducible.
\end{cor}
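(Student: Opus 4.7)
The plan is to proceed by a direct case analysis using Theorem \ref{thm:BIasRmodule}, which completely describes the lattice of $\Re$-submodules of $V$ in terms of the Jordan structure of $t_0$. In each case, I would identify where $V(\theta)$ sits in the lattice and read off the quotient.

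First I would handle the diagonalizable cases. If $t_0$ has exactly one eigenvalue on $V$ (case 2(a)), then every vector of $V$ is an eigenvector, so $V = V(\theta)$, and the conclusion is trivial. If $t_0$ has exactly two eigenvalues $\pm\theta_0$ on $V$ (case 2(b)), then the lattice has precisely the two nontrivial $\Re$-submodules $V(\theta_0)$ and $V(-\theta_0)$, both maximal. For any eigenvalue $\theta$ of $t_0$ in $V$, we have $\theta = \pm\theta_0$, and $V(\theta)$ is maximal in $V$, so $V/V(\theta)$ is simple.

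Next I would handle the non-diagonalizable cases. Here Theorem \ref{thm:BIasRmodule}(i) says that the unique eigenvalue of $t_0$ in $V$ is $0$, so the only relevant $\theta$ is $\theta = 0$. In the even-dimensional case 1(a), the lattice is $\{0\} \subset V(0) \subset V$, and in particular $V(0)$ is maximal; hence $V/V(0)$ is irreducible. In the odd-dimensional case 1(b), the lattice is $\{0\} \subset V(0)' \subset V(0) \subset V$, so again $V(0)$ is maximal and $V/V(0)$ is irreducible.

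There is no genuine obstacle here: the work was already carried out in Theorems \ref{thm:Ed}, \ref{thm:Ed(1,-1)}, \ref{thm:Ed(-1,1)}, \ref{thm:Ed(-1,-1)} and \ref{thm:Od}, and packaged into Theorem \ref{thm:BIasRmodule}. The only step requiring (minor) care is verifying in the non-diagonalizable odd-dimensional subcase that no eigenvalue of $t_0$ other than $0$ appears and that $V(0)$ is still a \emph{maximal} $\Re$-submodule of $V$ (not merely a term in the composition series); this is immediate from the Hasse diagram in 1(b), since $V(0)$ sits directly below $V$ with no intermediate submodule.
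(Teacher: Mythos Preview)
Your proposal is correct and is exactly how the paper treats this result: the corollary is stated as a direct byproduct of Theorem~\ref{thm:BIasRmodule} with no separate proof, and your case analysis simply spells out why, in each branch of that theorem, $V(\theta)$ is either all of $V$ or a maximal $\Re$-submodule.
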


\begin{cor}
For any finite-dimensional irreducible $\H$-module $V$, the $\Re$-module $V$ is completely reducible if and only if $t_0$ is diagonalizable on $V$.
\end{cor}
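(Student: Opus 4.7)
The plan is to derive the corollary directly from the classification in Theorem \ref{thm:BIasRmodule}, by examining each of the cases listed there and checking whether the lattice of $\Re$-submodules admits a decomposition of $V$ as an internal direct sum of irreducible $\Re$-submodules.

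First I would handle the sufficiency. Assume $t_0$ is diagonalizable on $V$. By part (ii) of Theorem \ref{thm:BIasRmodule}, either $V$ itself is an irreducible $\Re$-module, in which case complete reducibility is immediate, or $t_0$ has exactly two eigenvalues $\pm\theta$ with $\theta\neq 0$ and the only proper nonzero $\Re$-submodules of $V$ are $V(\theta)$ and $V(-\theta)$. Since the eigenspaces $V(\theta)$ and $V(-\theta)$ of $t_0$ for distinct eigenvalues intersect trivially and span $V$ by the diagonalizability of $t_0$, we obtain $V=V(\theta)\oplus V(-\theta)$ as an $\F$-vector space. Each summand is an $\Re$-submodule by Proposition \ref{prop:t0eigenspace=Rmodule}, and each is irreducible because it is a minimal element of the lattice in Theorem \ref{thm:BIasRmodule}(ii)(b). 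Hence $V$ is completely reducible as an $\Re$-module.

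For the converse, I would show the contrapositive: if $t_0$ is not diagonalizable on $V$, then $V$ is not completely reducible. By Theorem \ref{thm:BIasRmodule}(i), the lattice of $\Re$-submodules of $V$ is a chain, namely
\[
\{0\}\subsetneq V(0)\subsetneq V
\qquad\text{or}\qquad
\{0\}\subsetneq V(0)'\subsetneq V(0)\subsetneq V.
\]
In either situation $V(0)$ is a proper nonzero $\Re$-submodule of $V$, and the only $\Re$-submodules of $V$ whose intersection with $V(0)$ is $\{0\}$ are themselves equal to $\{0\}$ (no nontrivial submodule of $V$ misses $V(0)$, since all nontrivial ones are contained in $V(0)$). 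Consequently $V(0)$ has no $\Re$-submodule complement in $V$, so the $\Re$-module $V$ is not completely reducible.

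There is essentially no obstacle here, since the corollary is a direct consequence of the lattice description in Theorem \ref{thm:BIasRmodule}; the only thing to spell out carefully is that in case (ii)(b) the two eigenspaces of $t_0$ genuinely span $V$ (automatic from diagonalizability) and that they are $\Re$-invariant (which is Proposition \ref{prop:t0eigenspace=Rmodule}). If one wished to avoid invoking the full classification, one could instead argue combinatorially that the lattices in case (i) of Theorem \ref{thm:BIasRmodule} are linearly ordered chains of length $\geq 2$ above $\{0\}$, and that a nontrivially chained lattice never arises from a completely reducible module whose lattice is necessarily a Boolean lattice of subsets of the set of isotypic components; but the direct case analysis above is shorter.
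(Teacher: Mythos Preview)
Your proposal is correct and follows the same approach as the paper, which treats this corollary as an immediate byproduct of Theorem \ref{thm:BIasRmodule} without writing out a separate proof. Your argument simply makes explicit the routine case analysis the paper leaves implicit.
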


\subsection*{Acknowledgements}

The research is supported by the Ministry of Science and Technology of Taiwan under the project MOST 106-2628-M-008-001-MY4.

\bibliographystyle{amsplain}
\bibliography{MP}

\end{document}